\documentclass[10pt]{amsart}
\usepackage{amssymb}
\usepackage{amsfonts}
\usepackage{amscd}
\usepackage[dvips]{graphicx}
\usepackage[all,cmtip]{xy}
\usepackage{hyperref}
\usepackage{yhmath}
\usepackage{ulem}
\usepackage[all]{xy}
\usepackage{mathdots}
\usepackage{leftidx}
\usepackage{mathrsfs}
\usepackage{amsmath,amssymb, amsthm}
\usepackage{color}
\usepackage{xypic,color}
\usepackage{tikz}
\usepackage{mathdots}
\usepackage{picture}
\usepackage{enumerate}
\usepackage{makecell}
\usepackage{extarrows}
\usepackage{graphicx}
\usepackage{caption}
\usepackage{subcaption}
\usepackage{float}
\numberwithin{equation}{section}
\newtheorem{Them}{Theorem}[section]
\newtheorem{Lem}[Them]{Lemma}
\newtheorem{Def}[Them]{Definition}
\newtheorem{Cor}[Them]{Corollary}
\newtheorem{Prop}[Them]{Proposition}
\newtheorem{Ex}[Them]{Example}

\newcommand{\old}[1]{{\color{red}#1}}

\newcommand{\new}[1]{{\color{blue}#1}}

\newcommand{\ind}{{\mathsf{ind}}}

\newcommand{\rad}{{\mathsf{rad}}}

\newcommand{\D}{{\mathsf{D}}}
\newcommand{\RS}{{\mathsf{Rsupp}}}
\newcommand{\LS}{{\mathsf{Lsupp}}}

\newcommand{\m}{\mathsf{mod}}
\newcommand{\stmod}{ \mathsf{\underline{mod}}}

\newcommand{\Hom}{{\mathsf {Hom}}}

\newcommand{\StHom}{\mathsf{\underline{Hom}}}

\title[A construction of simple-minded systems over domestic Brauer graph algebras]{A construction of simple-minded systems over domestic Brauer graph algebras II: the 1-domestic case}
\author{Zhen Zhang}

\address{Zhen Zhang
	\newline Faculty of Arts and Sciences 
	\newline Beijing Normal University 
	\newline  Zhuhai 519087
	\newline P.R.China}
\email{zhangzhen@bnu.edu.cn}

\date{version of \today}

\newcommand{\sdp}{\times\kern-.2em\vrule height1.1ex depth-.05ex}

 \normalbaselines
\thanks{The research work is supported  by NSFC ( No.12301044).}
\begin{document}
\renewcommand{\thefootnote}{\alph{footnote}}
\setcounter{footnote}{-1} \footnote{\it{Mathematics Subject Classification(2020)}: 16G70, 18G65.}
\renewcommand{\thefootnote}{\alph{footnote}}
\setcounter{footnote}{-1}
\footnote{ \it{Keywords}: 1-domestic Brauer graph algebra; simple-minded system;  covering theory; maximal orthogonal system;  Euclidean component.}
\setcounter{footnote}{-1}

\begin{abstract}
Let $A$  be a 1-domestic Brauer graph algebra.  By covering theory and the characterization of simple-minded systems of 2-domestic Brauer graph algebras, we construct a family  of objects in $A$-$\stmod$ to be a simple-minded system and our construction provides all simple-minded systems in $A$-$\stmod$.  
\end{abstract}

\maketitle

\section{Introduction}
Simple-minded systems, introduced by Koenig and Liu \cite{KL}, is a family of objects that satisfies orthogonality and generating condition in the stable module category of any artin algebra. 
Let $C$ be a 2-domestic Brauer graph algebra. In a previous paper \cite{Z1}, We showed that a family $\mathcal{S}$ of objects in $C$-$\stmod$ is a simple-minded system  if and only if $\mathcal{S}$ is a maximal orthogonal system  containing at least one object for each stable  Euclidean component of the  stable AR-quiver $_{s}\Gamma_{C}$ of  $C$. we also present a full construction for a family  of objects  to be a simple-minded system in the stable module category of a 2-domestic Brauer graph algebra. 

According to Bocian and Skowronski \cite{BoS},  a Brauer graph algebra $A$ is 1-domestic if and only if the Brauer graph $G$ of $A$ is  a tree with multiplicity $m(i)=2$ for exactly two vertices $i=i_{1}, i_{2}$ and multiplicity $m(j)=1$ for any other vertex $j$, or a  graph with 
a unique cycle of odd length and the multiplicity of each vertices of $G$ is one.  They also showed that a domestic Brauer graph algebra is $1$-domestic or $2$-domestic, and there is no $n$-domestic Brauer graph algebras for $n\geq3.$ In this paper, by covering theory and the characterization of simple-minded systems of  $2$-domestic  Brauer graph algebras, we provide a construction of simple-minded systems in the stable module category of a $1$-domestic  Brauer graph algebra.  We state our main results as follows. 

\begin{Them}$($Theorem \ref{sms-1-2-domestic}$)$\label{sms-1-2-domestic-0}
Let $A$ be a $1$-domestic Brauer graph algebra and  $\overline{F}$ the dense covering functor from $C$-$\stmod$ to $A$-$\stmod$, where $C$ is a $2$-domestic Brauer graph algebra.  Then 
\begin{enumerate}[$(1)$]
\item If  $\mathcal{S}$ be a $\overline{\varphi}$-stable simple-minded system in $C$-$\stmod$, then $\overline{F}(\mathcal{S})$ is a simple-minded system in $A$-$\stmod$.
\item If  $\mathcal{M}$ be a simple-minded system in $A$-$\stmod$, then $\overline{F}^{-1}(\mathcal{M})$ is a $\overline{\varphi}$-stable simple-minded system in $C$-$\stmod$.
\end{enumerate}
\end{Them}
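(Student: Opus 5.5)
The argument rests on the covering $F\colon C\text{-}\m \to A\text{-}\m$ being a Galois covering with group $\mathbb{Z}_2=\langle\varphi\rangle$. Here $C$ is the 2-domestic Brauer graph algebra whose Brauer graph double covers that of $A$. The covering functor is dense, since $A$ is representation-finite or domestic, hence of the first kind. It induces a dense exact functor $\overline{F}$ on stable categories. This functor commutes with $\Omega$ and $\tau$, and for all $X,Y$ in $C$-$\stmod$ it satisfies
\[
\StHom_A(\overline{F}X,\overline{F}Y)\cong \StHom_C(X,Y)\oplus \StHom_C(X,\overline{\varphi}Y).
\]
I will use this push-down formula, together with the fact that every indecomposable of $A$-$\stmod$ is $\overline{F}X$ for an indecomposable $X$ with $\overline{F}X\cong\overline{F}X'$ iff $X'\cong X$ or $X'\cong\overline{\varphi}X$. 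I also use the characterization from \cite{Z1}: $\mathcal{S}$ is a simple-minded system in $C$-$\stmod$ iff it is a maximal orthogonal system meeting every stable Euclidean component of $_{s}\Gamma_C$.

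For (1), take a $\overline{\varphi}$-stable simple-minded system $\mathcal{S}$. I first check that the objects $\overline{F}S$, $S\in\mathcal{S}$, are bricks and pairwise orthogonal, with $S$ taken up to $\overline{\varphi}$-orbit.
\begin{itemize}
\item For $S\in\mathcal{S}$, $\StHom_A(\overline{F}S,\overline{F}S)\cong\StHom_C(S,S)\oplus\StHom_C(S,\overline{\varphi}S)$.
\item The first summand is a division ring.
\item The second summand vanishes. If $\overline{\varphi}S\ncong S$, this is because $\overline{\varphi}S\in\mathcal{S}$ is a different member, so orthogonality applies. The case $\overline{\varphi}S\cong S$ must be excluded; I expect it cannot occur for bricks here because $\varphi$ acts freely on the components it swaps, and this needs a check.
\item Orthogonality between $\overline{F}S$ and $\overline{F}S'$ for distinct orbits follows from the same formula.
\end{itemize}
For the generating condition, $\overline{F}$ is exact and dense. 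Every indecomposable $X$ of $C$-$\stmod$ lies in the extension closure of $\mathcal{S}$, so $\overline{F}X$ lies in the extension closure of $\overline{F}(\mathcal{S})$, and density gives every object of $A$-$\stmod$. Here I use that the filtration length is finite on both sides and that the triangles push down.

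For (2), put $\mathcal{S}=\overline{F}^{-1}(\mathcal{M})$, the set of indecomposables $X$ with $\overline{F}X\in\mathcal{M}$. It is $\overline{\varphi}$-stable by construction. The push-down formula shows that $\mathcal{S}$ consists of pairwise orthogonal bricks: $\StHom_A(\overline{F}X,\overline{F}Y)=0$ forces both summands to vanish, and for $X=Y$ a division ring on the left forces the summand $\StHom_C(X,X)$ to be local and indeed a division ring.

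The generating condition for $\mathcal{S}$ is the main obstacle, because pulling back filtrations along a covering is not direct. I would instead use the criterion of \cite{Z1}.
\begin{itemize}
\item \emph{Euclidean components.} $\overline{F}$ maps each stable Euclidean component of $_{s}\Gamma_C$ onto a Euclidean component of $_{s}\Gamma_A$ (or $\overline{\varphi}$ identifies a pair). Since $\mathcal{M}$ is a simple-minded system it must meet every such component, because the tubes alone cannot generate. So $\mathcal{S}$ meets every Euclidean component of $C$.
\item \emph{Maximality.} Suppose $Z$ is indecomposable and orthogonal to all of $\mathcal{S}$. Then $\overline{F}Z$ is orthogonal to $\mathcal{M}$, by the formula applied to $Z$ and to $\overline{\varphi}Z$. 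Since simple-minded systems are maximal orthogonal systems (a general fact of Koenig–Liu), $\overline{F}Z\in\mathcal{M}$, hence $Z\in\mathcal{S}$.
\end{itemize}
So $\mathcal{S}$ is a maximal orthogonal system meeting every Euclidean component, and \cite{Z1} makes it a simple-minded system.

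The delicate points are two. The first is the fixed-point case $\overline{\varphi}X\cong X$ in (1). The second is the correct correspondence between the Euclidean components of $A$ and those of $C$ in (2). I would handle both using the explicit description of $_{s}\Gamma$ for 1- and 2-domestic Brauer graph algebras from \cite{BoS}.
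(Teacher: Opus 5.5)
Your part (1) is the paper's argument. The decomposition $\StHom_A(\overline{F}X,\overline{F}Y)\cong\StHom_C(X,Y)\oplus\StHom_C(X,\overline{\varphi}Y)$ gives bricks and orthogonality, and generation is pushed forward along the exact dense functor $\overline{F}$. Your fixed-point worry is settled as in the paper: by (\ref{AR-quiv-3}), $\overline{\varphi}$ interchanges the two Euclidean components and the two families of tubes of $_{s}\Gamma_{C}$, so it fixes no indecomposable object.

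For (2) your route is genuinely different.
\begin{itemize}
\item The paper gets orthogonality of $\overline{F}^{-1}(\mathcal{M})$ from the same decomposition, and then asserts generation in one line from exactness and density of $F$. Those properties carry filtrations from $C$ to $A$, not back, so that line really only serves (1). A direct repair would use the exact pull-up functor, which sends $\overline{F}Y$ to $Y\oplus\overline{\varphi}Y$; one would still have to split off a summand inside $\mathcal{F}(\overline{F}^{-1}(\mathcal{M}))$.
\item You avoid pulling filtrations back. Instead you check the hypotheses of Theorem \ref{sms-2Domes-BGA}: maximality, and meeting both Euclidean components of $C$.
\item The cost is that you need, as an independent input, that every simple-minded system of $A$ meets the Euclidean component of $_{s}\Gamma_{A}$. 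In the paper this is only an output of (2) combined with \cite{Z1}.
\end{itemize}

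Two steps of your version need repair.

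\textbf{Maximality.} You apply ``simple-minded systems are maximal orthogonal systems'' to $\overline{F}Z$. But $\overline{F}Z$ need not be a stable brick: nothing kills the summand $\StHom_C(Z,\overline{\varphi}Z)$ of $\StHom_A(\overline{F}Z,\overline{F}Z)$, and Section 4 shows that tube objects of large quasi-length do fail to be bricks in $A$. Use a stronger fact instead. The functor $\StHom_A(\overline{F}Z,-)$ is homological and vanishes on $\mathcal{M}$, hence on $\mathcal{F}(\mathcal{M})=A$-$\stmod$. So $\StHom_A(\overline{F}Z,\overline{F}Z)=0$, which is absurd since $Z$ is non-projective.

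\textbf{Euclidean component.} ``The tubes alone cannot generate'' is asserted, not proved, and it is not formal. Section 5 exhibits nonzero stable maps between tube objects and Euclidean objects, so an extension of tube objects could a priori acquire a Euclidean summand. A complexity argument closes this:
\begin{itemize}
\item Tube objects are $\tau$-periodic and $\tau=\Omega^{2}$, so they are $\Omega$-periodic and have complexity at most $1$.
\item Objects of $\mathbb{Z}\widetilde{A}_{p,q}$ are string modules forming infinite $\tau$-orbits. There are only finitely many strings of each length, so their dimensions are unbounded along $\tau$-orbits, and they have complexity at least $2$.
\item A triangle in $A$-$\stmod$ comes from a short exact sequence up to projective summands, so complexity does not increase under $\ast$.
\end{itemize}

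With these two points supplied, your proof of (2) goes through.
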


\begin{Cor}$($Corollary \ref{sms-BGA}$)$\label{sms-BGA-0}
Let $A$ be a 1-domestic Brauer graph algebra and $\mathcal{S}$ a maximal orthogonal system which contains at least one object for the Euclidean component. Then $\mathcal{S}$ is a simple-minded system in $A$-$\stmod$ and the cardinality of $\mathcal{S}$ is the number of non-projective simple $A$-modules.
\end{Cor}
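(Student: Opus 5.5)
We reduce Corollary \ref{sms-BGA-0} to the 2-domestic characterization of \cite{Z1} by way of Theorem \ref{sms-1-2-domestic-0}(1). The idea is to lift $\mathcal{S}$ to $C$-$\stmod$, check that the lift is a maximal orthogonal system meeting every stable Euclidean component of ${}_{s}\Gamma_C$, apply \cite{Z1}, and then push back down.

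\textbf{Setting up the lift.} Take the Galois covering $F\colon C\to A$ with group $\mathbb{Z}/2=\langle\varphi\rangle$, and the induced dense functor $\overline{F}\colon C$-$\stmod\to A$-$\stmod$. Put $\widetilde{\mathcal{S}}=\overline{F}^{-1}(\mathcal{S})$. Because $\overline{F}$ is dense, $\overline{F}(\widetilde{\mathcal{S}})=\mathcal{S}$. Because $\overline{F}\circ\overline{\varphi}\cong\overline{F}$, the set $\widetilde{\mathcal{S}}$ is $\overline{\varphi}$-stable.

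\textbf{Orthogonality and maximality upstairs.} For the covering we have
\[
\StHom_A(\overline{F}X,\overline{F}Y)\cong\bigoplus_{g}\StHom_C(X,{}^{g}Y).
\]
Orthogonality of $\mathcal{S}$, meaning $\StHom(S,S')=0$ for $S\neq S'$ and $\StHom(S,S)$ a division ring, therefore forces the following for $\widetilde{\mathcal{S}}$:
\begin{itemize}
\item[] distinct lifts $X\not\cong Y$ have $\StHom_C(X,Y)=0$;
\item[] for each lift $X$, $\StHom_C(X,\overline{\varphi}X)=0$ whenever $\overline{\varphi}X\not\cong X$;
\item[] each $\StHom_C(X,X)$ is a division ring, since it is a direct summand of a division ring that contains the identity.
\end{itemize}
So $\widetilde{\mathcal{S}}$ is an orthogonal system. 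If $\widetilde{\mathcal{S}}\cup\{Z\}$ were orthogonal for some $Z\notin\widetilde{\mathcal{S}}$, then so would be $\widetilde{\mathcal{S}}\cup\{Z,\overline{\varphi}Z\}$, and pushing down would give $\mathcal{S}\cup\{\overline{F}Z\}$ orthogonal. This contradicts the maximality of $\mathcal{S}$.

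The one delicate point is checking that $\overline{F}Z$ is a new object with division endomorphism ring. This follows from $\StHom_C(Z,\overline{\varphi}Z)=0$, which comes out of the same orthogonality computation, together with the bricks in the stable tube/Euclidean components having no self-maps to their shifts.

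\textbf{Euclidean components.} The covering sends components to components. The two stable Euclidean components of $C$, of types $\mathbb{Z}\widetilde{A}_{p,q}$, are interchanged by $\overline{\varphi}$ and both map onto the unique Euclidean component of $A$. An object of $\mathcal{S}$ in that component therefore has lifts in both Euclidean components of $C$.

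\textbf{Conclusion.} By the characterization in \cite{Z1}, $\widetilde{\mathcal{S}}$ is a $\overline{\varphi}$-stable simple-minded system in $C$-$\stmod$. Theorem \ref{sms-1-2-domestic-0}(1) then gives that $\mathcal{S}=\overline{F}(\widetilde{\mathcal{S}})$ is a simple-minded system in $A$-$\stmod$.

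For the cardinality, recall that $A$ is symmetric and representation-infinite of finite type in the tame sense. Simple-minded systems of a stably equivalent algebra have cardinality equal to the number of non-projective simples; this holds for $C$ by \cite{Z1}. The number of simples of $C$ is twice that of $A$, and $\widetilde{\mathcal{S}}$ has exactly twice as many objects as $\mathcal{S}$ provided no object is $\overline{\varphi}$-fixed. Alternatively, one can invoke the known result that simple-minded systems over self-injective algebras of finite or tame type have cardinality equal to the number of non-projective simples.

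\textbf{Main obstacle.} I expect the hardest step to be the maximality lift, specifically the possibility of $\overline{\varphi}$-fixed objects in $C$-$\stmod$, such as homogeneous tube modules. I would handle this by a direct inspection of the components of ${}_{s}\Gamma_C$ and how $\overline{\varphi}$ acts on each of them.
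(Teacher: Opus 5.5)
Your route is the paper's: lift $\mathcal{S}$ to $\widetilde{\mathcal S}=\overline{F}^{-1}(\mathcal{S})$, apply Theorem \ref{sms-2Domes-BGA} in $C$-$\stmod$ (which also gives $|\widetilde{\mathcal S}|=2n$), and push down with Theorem \ref{sms-1-2-domestic}(1). The paper gets maximality of $\widetilde{\mathcal S}$ by citing Theorem \ref{sms-1-2-domestic}. Part (2) of that theorem assumes $\mathcal{S}$ is already simple-minded, so you are right that maximality is the real issue.

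\textbf{The gap.} Your argument fails exactly at the point you flag. Given $Z\notin\widetilde{\mathcal S}$ orthogonal to $\widetilde{\mathcal S}$, you need $\StHom_C(Z,\overline{\varphi}Z)=0$, i.e.\ that $\overline{F}Z$ is a stable brick of $A$.
\begin{enumerate}[$(1)$]
\item The ``same orthogonality computation'' yields this only for objects already known to push down to bricks, so it is circular here.
\item The general claim that stable bricks in the tubes have no nonzero maps to their $\overline{\varphi}$-translates is false. Put $\rho:=\Omega_{\widehat B}\overline{\varphi}$, so $\rho^2=\Omega_{\widehat B}^2\nu_{\widehat B}=\tau$. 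In the orientation-preserving case $\rho$ preserves each quasi-tube (Lemma \ref{quasi-tube-varphi}), hence acts on a rank-$p$ quasi-tube as $\tau^{(p+1)/2}$. Via $\widehat{B}$-$\stmod\simeq\mathcal{D}^b(H)$, take $Z$ of quasi-length $s$ there. Then $\StHom_C(Z,Z)\cong\Hom_H(Z,Z)$, but $\StHom_C(Z,\overline{\varphi}Z)\cong \D\Hom_H(Z,\rho Z)$, which is nonzero once $s\ge (p-1)/2$.
\end{enumerate}
So for $(p-1)/2\le s\le p-2$, $Z$ is a stable brick of $C$ whose image is not a stable brick of $A$. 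Examples are quasi-lengths $2$ and $3$ in the rank-$5$ tube of Example \ref{1-domestic-1}. Nothing in your argument excludes such a $Z$.

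\textbf{The fix.} You must use the Euclidean hypothesis in the maximality step, not only to see that $\widetilde{\mathcal S}$ meets both Euclidean components. If $Z$ is orthogonal to $\widetilde{\mathcal S}$, the covering isomorphisms give $\overline{F}Z\in{}^{\bot}W^{\bot}$ for some $W\in\mathcal{S}$ in the Euclidean component. It then suffices to show every object of ${}^{\bot}W^{\bot}$ is a stable brick of $A$.
\begin{enumerate}[$(1)$]
\item Objects of the Euclidean component are stable bricks by Lemma \ref{stable-Euclidean}.
\item $W$ has nonzero stable maps into every homogeneous tube, so no homogeneous-tube object lies in ${}^{\bot}W^{\bot}$.
\item In a quasi-tube, $\StHom_A(W,X)\neq 0$ iff the quasi-simple range of $X$ contains the quasi-simple $E$ with $\StHom_A(W,E)\ne 0$, cf.\ \eqref{eq-1}. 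By Serre duality ($\Omega$ is the Serre functor of $A$-$\stmod$), $\StHom_A(X,W)\neq0$ iff $\Omega X$ has such a quasi-simple in its range.
\item In the orientation-preserving case, $E$ and $\Omega^{-1}E=\tau^{-(p+1)/2}E$ cut the mouth into arcs of $(p-1)/2$ and $(p-3)/2$ quasi-simples. So $X\in{}^{\bot}W^{\bot}$ forces $s\le (p-3)/2$, and then $\StHom_A(X,X)\cong\Hom_H(X,X)\oplus\D\Hom_H(X,\rho X)\cong k$.
\item In the orientation-reversing case, $\Omega$ swaps $P$ and $Q$, the second summand vanishes, and $s\le p-2$ suffices (Lemma \ref{quasi-tube-brick-3}).
\end{enumerate}
Hence $\overline{F}Z$ is a stable brick, not in $\mathcal{S}$ and orthogonal to it, contradicting maximality.

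With this inserted, the rest of your argument goes through. The $\overline{\varphi}$-fixed objects you worry about do not exist: by \eqref{AR-quiv-3}, $\overline{\varphi}$ interchanges the two Euclidean components and the two tube families of $C$. You should drop the fallback appeal to a general cardinality theorem for tame self-injective algebras; no such result is available, and the count $|\widetilde{\mathcal S}|=2n$ already suffices.
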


Combining Theorem \ref{sms-2Domes-BGA} (\cite[Theorem 5.14]{Z1}) with Corollary \ref{sms-BGA-0},  we present a full  characterization of  simple-minded systems  in the stable module category of a domestic  Brauer graph algebra. We also provide a construction of simple-minded systems over domestic  Brauer graph algebras.

\begin{Cor}$($Corollary \ref{sms-BGA-1}$)$\label{sms-BGA-01}
	Let $A$ be a domestic Brauer graph algebra and $\mathcal{S}$  a family of objects in $A$-$\stmod$.
	Then $\mathcal{S}$ is a simple-minded system in $A$-$\stmod$ if and only if $\mathcal{S}$ is a maximal orthogonal system which contains at least one object for each Euclidean component.
\end{Cor}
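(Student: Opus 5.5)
Since $A$ is either $1$-domestic or $2$-domestic (Bocian--Skowro\'nski), I will split into two cases. When $A$ is $2$-domestic the statement is exactly Theorem \ref{sms-2Domes-BGA} (\cite[Theorem 5.14]{Z1}), so nothing needs to be done. The remaining work is the $1$-domestic case, where the claim is that the same characterization holds. Here I would combine Corollary \ref{sms-BGA-0} with Theorem \ref{sms-1-2-domestic-0}.

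\emph{Sufficiency.} Let $\mathcal{S}$ be a maximal orthogonal system in $A$-$\stmod$ containing at least one object of each Euclidean component. For $1$-domestic $A$ the stable AR-quiver has a unique Euclidean component, so this is precisely the hypothesis of Corollary \ref{sms-BGA-0}. That corollary then gives that $\mathcal{S}$ is a simple-minded system.

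\emph{Necessity.} Let $\mathcal{M}$ be a simple-minded system in $A$-$\stmod$. I would argue in four steps.
\begin{enumerate}[$(1)$]
\item By Theorem \ref{sms-1-2-domestic-0}(2), $\overline{F}^{-1}(\mathcal{M})$ is a $\overline{\varphi}$-stable simple-minded system in $C$-$\stmod$, where $C$ is $2$-domestic.
\item By Theorem \ref{sms-2Domes-BGA}, $\overline{F}^{-1}(\mathcal{M})$ is a maximal orthogonal system meeting every Euclidean component of ${}_{s}\Gamma_{C}$.
\item Orthogonality descends to $\mathcal{M}$ because the covering functor induces
\[\StHom_A(\overline{F}X,\overline{F}Y)\cong\bigoplus_{g}\StHom_C(X,\overline{\varphi}^{g}Y).\]
Since $\overline{F}^{-1}(\mathcal{M})$ is closed under $\overline{\varphi}$, these summands are zero for distinct objects and only the identity summand survives for $X=Y$. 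Hence $\mathcal{M}$ is orthogonal.
\item $\mathcal{M}$ meets the Euclidean component of $A$. The reason is that $\overline{F}$ maps the Euclidean components of $C$ onto the Euclidean component of $A$, and $\overline{F}^{-1}(\mathcal{M})$ meets them.
\end{enumerate}

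\emph{Maximality.} This is the step I expect to be the main obstacle. Suppose $Z$ is an indecomposable with $\mathcal{M}\cup\{Z\}$ orthogonal. I would lift $Z$ to some $\tilde Z$, which is possible because $\overline{F}$ is dense. Using the same Hom decomposition, the whole $\overline{\varphi}$-orbit of $\tilde Z$ is orthogonal to $\overline{F}^{-1}(\mathcal{M})$. Maximality of $\overline{F}^{-1}(\mathcal{M})$ then forces $\tilde Z\in\overline{F}^{-1}(\mathcal{M})$, so $Z\in\mathcal{M}$.

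The delicate point is the case where $\tilde Z$ and $\overline{\varphi}\tilde Z$ are not mutually orthogonal. In that case I would show that $Z$ itself has nonzero stable radical endomorphisms, so $Z$ is not a brick, contradicting its orthogonality to itself.
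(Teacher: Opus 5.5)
Your sufficiency direction is exactly the paper's proof, which consists only of the case split and a citation of Corollary \ref{sms-BGA} and Theorem \ref{sms-2Domes-BGA}. The difference is that you actually argue the ``only if'' direction for $1$-domestic $A$, which the paper leaves implicit: both cited results, as stated, only give the ``if'' direction. Your route for it is sound: pull back to $C$ via Theorem \ref{sms-1-2-domestic}(2), use the converse of \cite[Theorem 5.14]{Z1}, and push a Euclidean-component object back down. Be aware, though, that the converse of Theorem \ref{sms-2Domes-BGA} is not part of that theorem as quoted. It comes from the ``if and only if'' of \cite{Z1} claimed in the Introduction, which is also what the paper tacitly relies on for the $2$-domestic case. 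So saying that case is ``exactly Theorem \ref{sms-2Domes-BGA}'' overstates it. Two pieces of your argument are heavier than needed. First, step (3) is vacuous, since a simple-minded system is orthogonal by definition. Second, maximality holds in any triangulated category. If $\StHom_A(M,Z)=0$ for all $M\in\mathcal{M}$, then the exact sequences $\StHom_A(Y_2,Z)\to\StHom_A(Y,Z)\to\StHom_A(Y_1,Z)$ attached to the triangles defining $(\mathcal{M})_n$ force $\StHom_A(Y,Z)=0$ for all $Y$ in $\mathcal{F}(\mathcal{M})$. Taking $Y=Z$ gives $Z=0$, so no stable brick outside $\mathcal{M}$ can be added; this also settles maximality in the $2$-domestic case without \cite{Z1}. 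If you keep the lifting argument, the ``delicate point'' is a non-issue. You only need $\tilde Z$ itself to be a stable brick orthogonal to $\overline{F}^{-1}(\mathcal{M})$, and this holds because $\StHom_C(\tilde Z,\tilde Z)$ is a nonzero direct summand of $\StHom_A(Z,Z)\cong k$. Whether $\tilde Z$ and $\overline{\varphi}\tilde Z$ are orthogonal to each other plays no role. You should, however, state explicitly that $\tilde Z$ is a brick before invoking maximality of $\overline{F}^{-1}(\mathcal{M})$. The only genuinely covering-theoretic input needed for necessity is therefore that $\mathcal{M}$ meets the Euclidean component, which your steps (1), (2) and (4) handle.
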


 This paper is organized as follows. In Section 2, we recall simple-minded systems, repetitive algebras, Brauer graph algebras and some related concepts and conclusions. In Section 3, we characterize simple-minded systems by  covering theory  in $A$-$\stmod$ over a 1-domestic Brauer graph algebra $A$. In Section 4,  we  study the action of $\overline{\varphi}$ in the stable AR-quiver $_{s}\Gamma_{\widehat{B}}$ of  the repetitive algebra $\widehat{B}$ and determine stable bricks in $A$-$\stmod$.   In Section 5,  we  present a construction of simple-minded systems for  1-domestic Brauer graph algebras.

\section{Preliminary}	
We always assume that $k$ is an algebraically closed field. Let $A$ be a finite dimensional $k$-algebra. We denote by $A$-$\m$ the category of finite dimensional left $A$-modules and $A$-$\stmod$ the stable (projective) module  category of finite dimensional left $A$-modules. For two objects $X,Y$ in $A$-$\stmod$, the abelian group  $\StHom_R(X,Y)$ from $X$ to $Y$ is the quotient  $\Hom_A(X,Y)/\mathcal{P}(X,Y)$, where $\mathcal{P}(X,Y)$ is the subgroup of $\Hom_A(X,Y)$ consisting of all $A$-module homomorphisms  which factor through a projective  $A$-module.  We denote by $\Omega_{A}$ (resp. $\Omega^{-1}_{A}$) the  {\bf syzygy functor} (resp. {\bf cosyzygy functor}) which assigns to any object $M$ of $A$-$\stmod$ the kernel of its projective cover $P_{A}(M)\twoheadrightarrow M$ (resp. cokernel of its injective envelop $M\hookrightarrow I_{A}(M)$) in $A$-$\m$.

Let $A$ be a representation-infinite $k$-algebra and $\Gamma_{A}$ the AR-quiver of $A$. By a {\bf component} of $\Gamma_{A}$ we mean a connected component of $\Gamma_{A}$. By a {\bf quasi-tube},  we mean a translation quiver $\Gamma$ such that its full translation subquiver formed by all vertices which are not projective-injective is a tube (of the form $\mathbb{Z}A_{\infty}/(\tau^{r})$ for a positive integer $r$).  We say $\Gamma$ is a {\bf homogeneous tube} provided that  $\Gamma$ is of the form $\mathbb{Z}A_{\infty}/(\tau)$.  A component $\mathcal{C}$ is called an {\bf Euclidean component} if $\mathcal{C}$ is of the form $\mathbb{Z}\Delta$, where $\Delta$ is an   Euclidean quiver in  $\{\widetilde{A}_{n}(n\geq1),\widetilde{D}_{m}(m\geq4),\widetilde{E}_{6},\widetilde{E}_{7},\widetilde{E}_{8}\}$.  Recall that a connected component $\mathcal{C}$ of an AR-quiver is called  {\bf stable generalized standard}, if $\underline{\rad}^{\infty}(X,Y)=0$  for all $X$ and $Y$ in $\mathcal{C}$, where $\underline{\rad}^{\infty}(X,Y)=\rad^{\infty}(X,Y)/P(X,Y)$, $\rad^{\infty}(-,-)$ is  the intersection of $\rad^{t}(-,-)$ for $t\geq1$ and $P(X,Y)$ is the subspace of $\Hom_{A}(X,Y)$ which factors through a projective module. Moreover, a family of connected components  $\{\mathcal{C}_{i}\}_{i\in J}$ ($J$ an index set)  of $\Gamma_A$ is called  {\bf stable generalized standard}, if given  $X\in\mathcal{C}_{i}$ and  $Y\in\mathcal{C}_{j}$, we have $\underline{\rad^{\infty}}(X,Y)=0$.  An $A$-module $X$ is said to be {\bf $\tau$-periodic module} if there is a positive integer $m$ such that $X\cong\tau^{m}(X)$, otherwise, $X$ is called a {\bf non-$\tau$-periodic module}.

\subsection{Simple-minded system}
Let $\mathcal{T}$ be a  triangulated category with the shift functor $[1]$.   For two families $\mathcal{S}_{1}, \mathcal{S}_{2}$ of objects in $\mathcal{T}$, $\mathcal{S}_{1}\ast\mathcal{S}_{2}$ is defined to be a family of objects which consists of  the middle term in a triangle $S_{1} \longrightarrow  X \longrightarrow S_{2} \longrightarrow S_{1}[1]$ satisfying $\ S_{1}\in \mathcal{S}_{1}$ and $S_{2}\in \mathcal{S}_{2}.$   

We denotes $(\mathcal{S})_{0}=\{0\}$ and inductively defines $(\mathcal{S})_{n}=(\mathcal{S})_{n-1}\ast(\mathcal{S}\cup\{0\})$ for $n\in\mathbb{Z}^{+}$.  Note that $(\mathcal{S})_{n}\subseteq(\mathcal{S})_{n+1}$. We say that $\mathcal{S}$ is {\it extension-closed}, if $\mathcal{S}\ast\mathcal{S}\subseteq \mathcal{S}$. We denote the {\bf extension closure} of a family $\mathcal{S}$ of objects in $\mathcal{T}$ as \[\mathcal{F}(\mathcal{S}):=\bigcup_{n\geq0}(\mathcal{S})_{n}.\] 
It is known that  $\mathcal{F}(\mathcal{S})$ is the smallest extension-closed full subcategory of  $\mathcal{T}$  containing $\mathcal{S}$.

\begin{Def}\label{brick-orthogonal-system}
Let $\mathcal{T}$ be an additive $k$-category.  An object $M$ in $\mathcal{T}$ is a {\bf stable brick} if $\mathcal{T}(M,M)\cong k$.   Moreover, a family $\mathcal{M}$ of stable bricks in $\mathcal{T}$  is an {\bf orthogonal system} if $\mathcal{T}(M,N)=0$ for all distinct  $M, N$ in $\mathcal{M}$.
\end{Def}

\begin{Def}\label{definition-sms-right-tir} {\rm(\cite[Definition 2.1]{KL}, \cite[Definition 2.4, 2.5]{Dugas})} 
Let $\mathcal{T}$ be a  triangulated category.  A family of objects $\mathcal{S}$ in $\mathcal{T}$ is a {\bf simple-minded system} if the following  conditions are satisfied$\colon$
\begin{enumerate}[$(1)$]
\item {\rm(Orthogonality)} $\mathcal{S}$ is an orthogonal system in $\mathcal{T}$. 
\item {\rm(Generating condition)} Extension closure $\mathcal{F}(\mathcal{S})$ of $\mathcal{S}$ is equal to $\mathcal{T}$.
\end{enumerate}
\end{Def}

Koenig and Liu \cite{KL} introduced a weaker concept than simple-minded system, namely weakly simple-minded system.

\begin{Def}\label{definition-wsms-right-tir} {\rm(\cite[Definition 5.3]{KL})} 
Let $\mathcal{T}$ be a triangulated category.  A family of objects $\mathcal{S}$ in $\mathcal{T}$ is a {\bf weakly simple-minded system}  if the following two conditions are satisfied$\colon$
\begin{enumerate}[$(1)$]
\item {\rm(Orthogonality)} $\mathcal{S}$ is an orthogonal system in $\mathcal{T}$. 
\item {\rm(Weakly generating condition)} For any non-zero object $X$ in $\mathcal{T}$, there is an object $S$ in $\mathcal{S}$ such that $\mathcal{T}(S,X)\ncong 0.$
\end{enumerate}
\end{Def}

\begin{Them}$($\cite[Theorem 1.3]{CLZ}$)$ \label{simple-module-and-n-tube}
Let $A$ be a self-injective algebra and $\mathcal{C}$ a quasi-tube of rank $n$. Then the number of elements in a simple-minded system of $A$ lying in $\mathcal{C}$ is strictly less than $n$. In particular, none of the indecomposable modules in a simple-minded system  lie in the homogeneous tubes of the AR-quiver.
\end{Them}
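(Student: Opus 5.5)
The plan is to reduce to combinatorics inside the stable part of $\mathcal{C}$, which is a stable tube $\mathbb{Z}A_\infty/(\tau^n)$. In $A$-$\stmod$ the AR-translate $\tau$ is a triangle autoequivalence, and Serre duality holds: $D\StHom_A(X,Y)\cong\StHom_A(Y,\tau\Omega^{-1}X)$. Equivalently,
\[
\StHom_A(X,\Omega^{-1}Y)\cong D\StHom_A(Y,\tau X).
\]
Let $\mathcal{S}$ be a simple-minded system and $\mathcal{S}_{\mathcal{C}}=\{X_1,\dots,X_r\}$ its members lying in $\mathcal{C}$. Since the stable part of a quasi-tube behaves like a (stable) standard tube for morphisms of finite radical length, the first step is to record the standard tube facts. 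A stable brick in a tube of rank $n$ has quasi-length at most $n$. Pairwise $\StHom$-orthogonal bricks in the tube correspond to a "non-crossing" family of wings, so any orthogonal system inside the tube has at most $n$ elements. Hence it remains to exclude $r=n$.

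Suppose $r=n$. I would first show that an orthogonal family of $n$ bricks in a rank-$n$ tube is forced to be "complete" in the following sense. Using the wing description of bricks together with orthogonality, the regular tops (equivalently, the regular socles) of $X_1,\dots,X_n$ hit every $\tau$-orbit of the mouth exactly once. Equivalently, after relabelling, $\StHom_A(X_i,\tau X_{i+1})\neq0$ cyclically; via the duality above, this means there are nonzero "extensions" $\StHom_A(X_{i+1},\Omega^{-1}X_i)\neq 0$ around the whole cycle. In the homogeneous case $n=1$ this is immediate: for a brick $X$ with $\tau X\cong X$ we get $D\StHom_A(X,\Omega^{-1}X)\cong\StHom_A(X,X)\cong k$.

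The second step is to turn this cyclic configuration into a contradiction with the simple-minded system axioms. Gluing the nonzero maps $X_{i+1}\to\Omega^{-1}X_i$ cyclically produces triangles whose middle terms lie in the tube and have arbitrarily large quasi-length, say $Y_1,Y_2,\dots$. All of these lie in $\mathcal{F}(\mathcal{S}_{\mathcal{C}})\subseteq\mathcal{F}(\mathcal{S})$, and they have the same regular top $X_{i_0}$, so each admits an epimorphism-like map onto the same simple object $X_{i_0}$. I would then use the known structure result that $\mathcal{F}(\mathcal{S})$ behaves like a length category with simple objects $\mathcal{S}$. In particular, $\StHom_A(S,-)$ restricted to $\mathcal{F}(\mathcal{S})$ is controlled by $\mathcal{S}$-length, so both $\StHom_A(S,Y_k)$ and the $\mathcal{S}$-lengths would stay finite and compatible. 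The aim is to show instead that the $Y_k$ force a nonzero map $\StHom_A(X_j,\Omega X_{j'})$ (or $\Omega^{-1}$) between members of $\mathcal{S}$ in a way that creates a self-extension loop. That would make $\mathcal{F}(\mathcal{S})$ contain an object $Z$ with $\tau^n Z\cong Z$ and an endomorphism that is neither zero nor an isomorphism, yet $Z$ is filtered with only simple factors that are $\mathcal{S}$-bricks, against the orthogonality-based Schur property. Alternatively, one can use the fact that a simple-minded system forces $\StHom_A(S,\Omega^{-1}S')$ to be "directed" with no oriented cycles in the Ext-quiver of $\mathcal{S}$ within a single periodic component. The cyclic configuration found above is precisely such a cycle.

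The main obstacle is this last step: identifying exactly which property of simple-minded systems (beyond orthogonality and generation) rules out the cyclic extension configuration. My first attempt would be a Jordan–Hölder/length argument in $\mathcal{F}(\mathcal{S})$. I would show that the rank-$n$ tube would then contain an object of $\mathcal{F}(\mathcal{S})$ all of whose $\mathcal{S}$-composition factors lie in $\mathcal{S}_{\mathcal{C}}$, and whose endomorphism ring is non-local yet has no idempotents. That is impossible, giving $r<n$, and in particular $r=0$ for homogeneous tubes.
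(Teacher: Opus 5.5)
The paper does not prove this statement. It is quoted from \cite[Theorem 1.3]{CLZ} and used only as a black box in Section 5, so your argument has to stand on its own. It does not: there is a genuine gap exactly where you locate the ``main obstacle''.

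Your first step produces $n$ pairwise orthogonal bricks meeting every $\tau$-orbit of the mouth, with $\StHom_A(X_{i+1},\Omega^{-1}X_i)\neq 0$ cyclically. This configuration does not contradict the axioms of a simple-minded system.
\begin{itemize}
\item For a self-injective Nakayama algebra with $n$ simple modules and Loewy length $L\geq 3$, the simple modules form a simple-minded system. They are precisely the $n$ quasi-simples of the stable tube $\mathbb{Z}A_{L-1}/\langle\tau^{n}\rangle$, and their extension quiver is an oriented $n$-cycle.
\item For $k[x]/(x^{2})$, the simple module $S$ satisfies $\tau S\cong S$ and $\StHom(S,\Omega^{-1}S)\cong k$, yet $\{S\}$ is a simple-minded system.
\end{itemize}
So the principle ``no oriented cycles in the Ext-quiver of $\mathcal{S}$ inside a periodic component'' is false. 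Any valid proof must use, in an essential way, that $\mathcal{C}$ is an \emph{infinite} tube $\mathbb{Z}A_\infty/\langle\tau^n\rangle$. Your only use of this is that middle terms $Y_k$ of arbitrarily large quasi-length lie in $\mathcal{F}(\mathcal{S})$. That carries no information, because by the generating condition $\mathcal{F}(\mathcal{S})$ is all of $A$-$\stmod$.

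The length/Schur route you suggest cannot supply the missing step, for three reasons.
\begin{itemize}
\item $\mathcal{F}(\mathcal{S})$ is not a length category, and $\mathcal{S}$-multiplicities of filtrations are not invariants. In $k[x]/(x^2)$-$\stmod$ there is a triangle $S\to 0\to S\xrightarrow{\sim}S[1]$, so $0\in\{S\}\ast\{S\}$.
\item Being filtered by $\mathcal{S}$ imposes no Schur-type restriction on endomorphisms. Every indecomposable non-brick lies in $\mathcal{F}(\mathcal{S})$, for instance the objects of large quasi-length in Lemma \ref{quasi-tube-brick}.
\item In a Krull--Schmidt category, a nonzero object with non-local endomorphism ring always has a nontrivial idempotent. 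So ``non-local yet without idempotents'' cannot be produced, and is not a usable contradiction.
\end{itemize}

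Your first step also has a problem. It transplants the wing combinatorics of standard tubes: bricks have quasi-length at most $n$, orthogonal families have at most $n$ members, and families of size $n$ are ``complete''. For an arbitrary self-injective algebra, a stable quasi-tube need not be generalized standard, and $\Omega$ may map $\mathcal{C}$ to itself. These facts therefore need proofs you have not given, and you rely on them to exclude $r>n$ as well.

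As it stands, the proposal reduces the theorem to an unproved claim that the cyclic configuration is impossible. The examples above show that claim is false unless the infiniteness of the tube is brought in.
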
	

The following result provided us a sufficient and necessary condition for an orthogonal system to be a simple-minded system in the stable module category of a 2-domestic Brauer graph algebra.
\begin{Them}{\rm(\cite[Theorem 5.14]{Z1}}\label{sms-2Domes-BGA}
Let $A$ be a 2-domestic Brauer graph algebra and $\mathcal{S}$ a maximal orthogonal system  which contains at least one object for each Euclidean component. Then $\Omega^{-1}(\mathcal{S})\subseteq\mathcal{F}(\mathcal{S})$, in particular, $\mathcal{S}$ is a simple-minded system in $A$-$\stmod.$
\end{Them}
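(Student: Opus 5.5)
The plan is to reduce the generating condition to the single inclusion $\Omega^{-1}(\mathcal{S})\subseteq\mathcal{F}(\mathcal{S})$ and then prove that inclusion component by component in the stable AR-quiver $_{s}\Gamma_{A}$. The reduction rests on the standard Koenig–Liu/Dugas mechanism for stable module categories of self-injective algebras, which I am assuming rather than deriving. It says that if an orthogonal system $\mathcal{S}$ is weakly simple-minded (Definition \ref{definition-wsms-right-tir}) and $\Omega^{-1}(\mathcal{S})\subseteq\mathcal{F}(\mathcal{S})$, then $\mathcal{F}(\mathcal{S})$ is closed under $\Omega^{-1}$ and under cones. Hence $\mathcal{F}(\mathcal{S})$ is a triangulated subcategory, and weak generation forces it to be all of $A$-$\stmod$. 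So I would prove two things: first, $\mathcal{S}$ is weakly simple-minded; second, $\Omega^{-1}(S)\in\mathcal{F}(\mathcal{S})$ for each $S\in\mathcal{S}$.

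\textbf{Step 1: list the stable bricks.} For a 2-domestic Brauer graph algebra (Bocian–Skowro\'nski), $_{s}\Gamma_{A}$ has the following components:
\begin{enumerate}[$(1)$]
\item two Euclidean components of the form $\mathbb{Z}\widetilde{A}_{p,q}$, interchanged or fixed by $\Omega$;
\item finitely many exceptional tubes of finite rank;
\item a $\mathbb{P}^1(k)$-family of homogeneous tubes.
\end{enumerate}
By Theorem \ref{simple-module-and-n-tube}, no member of $\mathcal{S}$ lies in a homogeneous tube, and a tube of rank $n$ carries fewer than $n$ members. Using the description of string and band modules, I would compute the stable bricks: the modules in the Euclidean components together with the modules of quasi-length less than the rank in the exceptional tubes. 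I would also compute the stable Hom-spaces between them via hammocks, using the fact that these components form a stable generalized standard family.

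\textbf{Step 2: weak generation.} Take a nonzero indecomposable $X$. If $X$ lies in a tube, maximality of $\mathcal{S}$ together with the presence of some $S\in\mathcal{S}$ in each Euclidean component gives a nonzero map $S\to X$. The point is that the Euclidean components map nontrivially to every tube (the Kronecker-like structure), while inside an exceptional tube maximality excludes orthogonality. If $X$ lies in a Euclidean component, the hammock starting at a member of $\mathcal{S}$ in that component, combined with maximality, gives the nonzero map.

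\textbf{Step 3: the inclusion $\Omega^{-1}(S)\in\mathcal{F}(\mathcal{S})$, the main obstacle.} For $S\in\mathcal{S}$, consider the triangle $S\to I(S)\to\Omega^{-1}S\to S[1]$. I would argue by induction on a suitable length: quasi-length in the tubes, and distance to the mouth or $\tau$-orbit position in $\mathbb{Z}\widetilde{A}_{p,q}$. The aim is to show that every indecomposable $Y$ with $\StHom(S',Y)\neq0$ for some $S'\in\mathcal{S}$ admits a triangle $S'\to Y\to Y'$ with $Y'$ of smaller length, so that filtering terminates in $\mathcal{F}(\mathcal{S})$. Here maximality is used to guarantee that the appropriate $S'$ exists.

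The delicate case is the Euclidean components. There $\Omega^{-1}$ shifts the component, and I must check that the chosen members of $\mathcal{S}$ in the two components are positioned so that the induction terminates; this uses the hypothesis that each Euclidean component contains a member of $\mathcal{S}$. I expect this case analysis, over the possible configurations of $\mathcal{S}$ in $\mathbb{Z}\widetilde{A}_{p,q}$ and in the exceptional tubes, to be the bulk of the work.
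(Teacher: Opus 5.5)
This theorem is not proved in the present paper; it is quoted from the author's earlier work (Theorem 5.14 there). So there is no in-paper argument to compare yours with. Judged on its own, your proposal has a genuine gap, and it sits in the reduction you take for granted.

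\textbf{The reduction.} From $\Omega^{-1}(\mathcal{S})\subseteq\mathcal{F}(\mathcal{S})$ you do get $\Omega^{-1}\mathcal{F}(\mathcal{S})=\mathcal{F}(\Omega^{-1}\mathcal{S})\subseteq\mathcal{F}(\mathcal{S})$, and hence closure under cones. Closure under cocones, however, needs $\Omega\mathcal{F}(\mathcal{S})\subseteq\mathcal{F}(\mathcal{S})$, which you never establish. So you only have a suspended subcategory, not a triangulated one.

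More importantly, weak generation (no nonzero object is right orthogonal to $\mathcal{S}$) does not by itself put every object into $\mathcal{F}(\mathcal{S})$. The argument that works needs an approximation property. If $\mathcal{F}(\mathcal{S})$ is contravariantly finite, take a minimal right approximation $U\to X$. By the triangulated Wakamatsu lemma its cone $Z$ satisfies $\StHom(\mathcal{F}(\mathcal{S}),Z)=0$, so weak generation kills $Z$. Note that this does not use $\Omega^{-1}$-closure at all.

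In the representation-finite case both missing facts come for free: $\Omega^{-1}$ permutes finitely many indecomposables, and approximations always exist. For a $2$-domestic Brauer graph algebra neither does, and contravariant finiteness of an extension closure in the presence of tubes is exactly the kind of property that can fail. So the passage from ``$\Omega^{-1}$-closed and weakly generating'' to ``everything'' is unsupported as you state it. A target that does suffice is to show that every simple module lies in $\mathcal{F}(\mathcal{S})$: composition series give triangles in $A$-$\stmod$, so the simples generate by extensions.

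\textbf{Step 2.} Maximality only tells you that a stable brick $Y\notin\mathcal{S}$ has $\StHom(S,Y)\neq0$ \emph{or} $\StHom(Y,S)\neq0$ for some $S\in\mathcal{S}$. By Serre duality for symmetric algebras, $\StHom(Y,S)\cong D\StHom(S,\Omega Y)$, so the second alternative gives a map into $\Omega Y$, not into $Y$. Maximality also says nothing about indecomposables that are not stable bricks. Finally, a single object of a Euclidean component has nonzero stable maps to only one quasi-simple of each exceptional tube (compare the wing descriptions of supports in Section 5). So the ``Kronecker-like'' heuristic does not give weak generation object by object.

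\textbf{Step 3.} There is no well-founded length on $\mathbb{Z}\widetilde{A}_{p,q}$, which has no mouth and no $\tau$-periodicity. The cone of a nonzero map $S'\to Y$ is in general neither indecomposable nor in a controlled component, so nothing makes your induction terminate. This is where the substance of the theorem lies. One needs:
\begin{itemize}
\item the exact position of $\Omega^{\pm1}(X)$ relative to $X$ in the Euclidean components;
\item explicit triangles such as $\tau^{-i}Y[-1]\to(0,a,b+i-1)\to(0,a,b+i)\to\tau^{-i}Y$, linking irreducible maps there to quasi-simples of the tubes;
\item a case analysis over the possible configurations of $\mathcal{S}$.
\end{itemize}
You defer all of this.

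A minor correction to Step 1: for $C=T(B)$ the two Euclidean components are always interchanged by $\Omega$, never fixed. This is because $\Omega^{-1}_{\widehat{B}}$ shifts the Euclidean components of $\widehat{B}$ by one, while $\nu_{\widehat{B}}$ shifts them by two.
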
 

\subsection{Repetitive algebra and domestic Brauer graph algebra}
In this subsection, we shall present the definition of Brauer graph algebras in terms of the 
repetitive algebra of a hereditary algebra of Euclidean type $\widetilde{A}_{m}$, instead of via combinatorial data of a Brauer graph directly. 

Let $B$ be a finite dimensional algebra.  The {\bf repetitive algebra} $\widehat{B}$ of  $B$  is defined to be \[\widehat B=
\bigoplus_{i\in{\mathbb{Z}}}B_{i}\oplus Q_{i}, 
\]
where $B_{i}$ (resp. $ Q_{i}$) is $B$
(resp. $\D(B)$) for all $i\in{\mathbb{Z}}$. We denote the element of $\widehat{B}$ by $(a_{i},f_{i})_{i\in\mathbb{Z}}, $ where $a_{i}\in B_i$ and $f_{i}\in Q_i$. The multiplication in $\widehat{B}$ is defined to be
\[(a_{i},f_{i})_{i}\cdot(b_{i},g_{i})_{i}=(a_ib_i,a_{i}g_{i}+f_{i}b_{i+1})_{i}, 
\]
for $a_{i},b_{i}\in B_i$, $f_{i},g_{i}\in Q_i$ and $i\in\mathbb{Z}$.  A group $G$ of automorphisms of $\widehat B$ is said to be {\bf admissible}, if $G$ acts freely on the set of objects of $\widehat B$ and has finitely many orbits.  {\bf Nakayama automorphism} $\nu_{\widehat B}$ is defined by the identity shift $B_{i}\rightarrow B_{i+1}$ and $Q_{i}\rightarrow Q_{i+1}$ for any integer $i$. Take a complete set  $\{e_{i}\mid i=1,2,\cdots,n\}$ of primitive orthogonal idempotent elements of $B$. Then there is a set  $\{e_{(i,m)}\mid i=1,2,\cdots,n, m\in\mathbb{Z}\}$ of primitive orthogonal idempotent elements of $\widehat{B}$.  Note that  $\nu _{\widehat{B}}({\widehat B}e_{(i,m)})=\widehat{B}e_{(i,m+1)}$  for any integers $i$ and $m$.  The infinite cyclic group $<\nu_{\widehat{B}}>$ generated by $\nu_{\widehat{B}}$ is an admissible group of automorphism of  $\widehat{B}$. It is well-known that the orbit category $\widehat{B}/<\nu_{\widehat{B}}> $ is the trivial extension $T(B)=B\ltimes D(B)$ and there is a canonical Galois covering 
\[F:\widehat{B}\rightarrow \widehat{B}/<\nu_{\widehat{B}}>\cong T(B).\]

Recalled that an algebra $B$ is an {\bf Euclidean algebra}, if $B$ is a representation-infinite tilted algebra of the Euclidean type $\widetilde{A}_{m}$($m\geq1$), $\widetilde{D}_{n}$ ($n\geq4$), $\widetilde{E}_{6}$, $\widetilde{E}_{7}$, $\widetilde{E}_{8}$, having a complete section in the preinjective component (please refer to \cite{Ri}).  
We assume that $B$ is triangular, that is, the Gabriel quiver  $Q_{B}$ has no oriented cycles. We identity $B$ with the full bounded subcategory of $\widehat{B}$ given by the objects  $e_{(0,i)}$ for $1\leq i\leq n$. For a sink $i$ of $Q_{B}$, the {\bf reflection }$S^{+}_{i}B$ of $B$ at $i$ \cite{HW} is the full subcategory of $\widehat{B}$ given by the objects
\[e_{(0,j)}, 1\leq j\leq n, j\neq i,\ \  \text{and}\ \   e_{(1,i)}=\nu_{\widehat{B}}(e_{(0,i)}). \]
Then $\sigma^{+}_{i}Q_{B}=Q_{S^{+}_{i}B}$ is called the reflection of $Q_{B}$ at $i$. 
An Euclidean algebra $B$ is said to be {\bf exceptional} if there exists a reflection sequence of sinks $i_{1}, i_{2},\cdots,i_{t}$ in $Q_{B}$  such that $t<r(K_{0}(B))$ and $B\cong S^{+}_{i_{t}}\cdots S^{+}_{i_{2}}S^{+}_{i_{1}}B$, where $r(K_{0}(B))$ is the dimension of Grothendieck group of $B$.

\begin{Them}$($\cite{BoS}, \cite[Theorem 4.9]{S}$)$\label{Euclidean-exceptional}
Let $B$ be an Euclidean  algebra. Then the following statements are equivalent.
\begin{enumerate}[$(1)$]
\item $B$ is an exceptional algebra.
\item There is an automorphism of $\varphi$ of $\widehat{B}$ with  $\varphi^{2}=\nu_{\widehat{B}}$.	
\end{enumerate}
\end{Them}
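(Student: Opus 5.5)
The plan is to prove both implications inside $\widehat{B}$. Sequences of sink reflections and automorphisms will be compared through their action on the components of the AR-quiver $\Gamma_{\widehat{B}}$. I would use the standard Hughes--Waschb\"usch facts \cite{HW}:
\begin{enumerate}[$(1)$]
\item for a sink $i$ of $Q_B$, the reflection $S_i^{+}B$ is again a full convex subcategory of $\widehat{B}$, and $\widehat{S_i^{+}B}\cong\widehat{B}$ canonically;
\item performing the reflections at all $n=r(K_0(B))$ vertices, in an admissible order of sinks, turns $B$ into $\nu_{\widehat B}(B)$.
\end{enumerate}
I would also use the known shape of $\Gamma_{\widehat{B}}$ for a Euclidean algebra $B$. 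It is a $\mathbb{Z}$-indexed chain of components $\cdots,\mathcal{C}_{0},\mathcal{T}_{0},\mathcal{C}_{1},\mathcal{T}_{1},\cdots$, where the $\mathcal{C}_j$ are of Euclidean type $\mathbb{Z}\Delta$ and the $\mathcal{T}_j$ are $\mathbb{P}_1(k)$-families of quasi-tubes. The Nakayama automorphism $\nu_{\widehat B}$ shifts this chain by two steps, sending $\mathcal{C}_j\mapsto\mathcal{C}_{j+1}$ and $\mathcal{T}_j\mapsto\mathcal{T}_{j+1}$.

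$(1)\Rightarrow(2)$. Let $i_1,\dots,i_t$ with $t<n$ be a reflection sequence of sinks, and put $B'=S^{+}_{i_t}\cdots S^{+}_{i_1}B\subseteq\widehat{B}$. Fix an isomorphism $\psi\colon B\to B'$.
\begin{enumerate}[$(a)$]
\item By the first Hughes--Waschb\"usch fact, $\psi$ extends to an isomorphism $\widehat{B}\cong\widehat{B'}=\widehat{B}$, which gives an automorphism $\varphi$ of $\widehat{B}$ with $\varphi(B)=B'$.
\item Any automorphism of $\widehat{B}$ preserves the linear order of the components above, so it acts by a translation $s$ of the chain.
\item Since $0<t<n$, the subcategory $B'$ lies strictly between $B$ and $\nu_{\widehat B}(B)$. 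Its module categories then meet the components strictly between those of $B$ and of $\nu_{\widehat B}(B)$, which forces $0<s<2$ and hence $s=1$.
\item It follows that $\varphi^{2}\nu_{\widehat B}^{-1}$ fixes every component. An automorphism fixing all components fixes the objects of the (unique up to this ambiguity) complete section. So it restricts to an automorphism of $B$ that fixes every object, i.e.\ a rigid automorphism; this is checked on $B$.
\item Finally, since $\operatorname{Aut}$ of the hereditary/tilted Euclidean algebra is connected on its identity-on-objects part, I would replace $\psi$ by $\psi\circ\theta$ for a suitable $\theta\in\operatorname{Aut}(B)$, playing the role of a ``square root''. This achieves $\varphi^{2}=\nu_{\widehat B}$ exactly.
\end{enumerate}

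$(2)\Rightarrow(1)$. Suppose $\varphi^2=\nu_{\widehat B}$.
\begin{enumerate}[$(a)$]
\item As in step $(b)$ above, $\varphi$ shifts the component chain by $s$ with $2s=2$, so $s=1$.
\item Hence $\varphi(B)$ is a full convex subcategory of $\widehat{B}$ lying between $B$ and $\nu_{\widehat B}(B)$.
\item Every such "slice" category is obtained from $B$ by an admissible sequence of sink reflections $i_1,\dots,i_t$. This holds because convex $n$-object sections of $\widehat{B}$ between $B$ and $\nu_{\widehat B}B$ correspond to such sequences, as in \cite{HW}.
\item We have $t\neq 0$ and $t\neq n$, since $\varphi$ shifts components by exactly one step, neither $0$ nor $2$. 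Thus $t<n$.
\item The restriction $\varphi|_B\colon B\to\varphi(B)=S^{+}_{i_t}\cdots S^{+}_{i_1}B$ is an isomorphism, so $B$ is exceptional.
\end{enumerate}

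The main obstacle is step $(e)$ of the first implication. There one must upgrade "$\varphi^2$ and $\nu_{\widehat B}$ induce the same action on components" to an actual equality $\varphi^2=\nu_{\widehat B}$. I would first try to show that the group of automorphisms of $\widehat{B}$ acting trivially on objects is divisible enough (a torus-like group for $\widetilde{A}_m$ coming from the arrow scalars). That would let me adjust $\psi$ to absorb the discrepancy. If that fails, I would fall back on the explicit case analysis of \cite[Theorem 4.9]{S}.
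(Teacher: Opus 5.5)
The paper gives no proof of this statement; it only quotes it from \cite{BoS} and \cite[Theorem 4.9]{S}. So your outline has to stand on its own, and for $(1)\Rightarrow(2)$ it has a genuine gap at step (e), where the whole content of that implication sits. Replacing $\psi$ by $\psi\circ\theta$ replaces $\varphi$ by $\varphi\widehat{\theta}$, and
\[(\varphi\widehat{\theta})^{2}=\nu_{\widehat{B}}\,\rho\,\alpha(\widehat{\theta})\,\widehat{\theta},\qquad \rho:=\nu_{\widehat{B}}^{-1}\varphi^{2},\quad \alpha(x):=\varphi^{-1}x\varphi .\]
So you must solve the twisted equation $\rho\,\alpha(\widehat{\theta})\,\widehat{\theta}=1$, not $\widehat{\theta}^{2}=\rho^{-1}$. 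Divisibility or connectedness of the identity-on-objects automorphisms does not solve it: on a torus on which $\alpha$ acts by inversion, the map $\theta\mapsto\alpha(\theta)\theta$ is trivial.

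Step (d) is also false. An automorphism of $\widehat{B}$ sending every Euclidean component and every tube family to itself need not fix objects. Take $B$ hereditary with quiver $a\to c\to b$, $a\to d\to b$. The swap $c\leftrightarrow d$ extends to $\widehat{B}$ and preserves every $\mathcal{Y}_{i}$ and every family $\mathcal{C}_{i}$ (it exchanges the two rank-$2$ tubes inside $\mathcal{C}_{i}$), yet it moves objects. So $\rho$ may permute objects. In that case no rigid $\theta$ helps, because $\rho\,\alpha(\widehat{\theta})\,\widehat{\theta}$ induces the same permutation as $\rho$.

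What is missing is an argument that some isomorphism $\psi\colon B\to B'$ can be chosen so that $\varphi^{2}$ and $\nu_{\widehat{B}}$ agree, first on objects and then on arrows. That existence statement is exactly the substance of the cited theorem, so falling back on \cite[Theorem 4.9]{S} is circular.

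Your input on $\Gamma_{\widehat{B}}$ is also wrong. In fact $\nu_{\widehat{B}}(\mathcal{Y}_{i})=\mathcal{Y}_{i+2}$ and $\nu_{\widehat{B}}(\mathcal{C}_{i})=\mathcal{C}_{i+2}$, as in (\ref{AR-quiver-2}); this is why $T(B)$ has two Euclidean components. With your data ($\nu_{\widehat{B}}$ moving the chain by two positions, $\mathcal{C}_{j}\mapsto\mathcal{C}_{j+1}$), $s=1$ would carry a $\mathbb{Z}\Delta$-component onto a family of tubes. Your parity steps in both directions would then prove that no square root exists. They are correct only when $s$ counts Euclidean components and $\nu_{\widehat{B}}$ has shift $2$.

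Even after that correction, step (c) needs a real lemma identifying $\varphi(\mathcal{Y}_{0})$. It is not true that the $B'$-modules avoid the components met by $B$ and by $\nu_{\widehat{B}}(B)$. Already for the Kronecker algebra, $B'$-modules lie both in the component of the preinjective $B$-modules and in the component of the preprojective $\nu_{\widehat{B}}(B)$-modules.

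Your $(2)\Rightarrow(1)$ is fine in outline, and it can be done without components. Since $\varphi$ commutes with $\nu_{\widehat{B}}$, $\varphi(B)$ is convex and meets each $\nu_{\widehat{B}}$-orbit exactly once. By a Hughes--Waschb\"usch-type description of such subcategories (which you should state and justify precisely), $\nu_{\widehat{B}}^{-k}\varphi(B)=S^{+}_{i_{t}}\cdots S^{+}_{i_{1}}B$ for some $k$ and some $0\le t<n$. Here $t=0$ is impossible: $\nu_{\widehat{B}}^{-k}\varphi$ would then preserve $B$, while its square $\nu_{\widehat{B}}^{1-2k}$ does not.
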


The following theorem provide us with an equivalent definition of  1-domestic Brauer graph algebra. 

\begin{Them}$($\cite[Theorem 1]{BoS}$)$\label{symmetric-algebras-of-Euclidean-type-1}
Let $A$ be a basic indecomposable algebra. Then the following conditions are equivalent.
\begin{enumerate}[$(1)$]
\item $A$ is isomorphic to  a  1-domestic Brauer graph algebra. 	
\item $A$ is a symmetric algebra of Euclidean type $\widetilde{\mathbb{A}}_m$  and the Cartan matrix of $A$ is non-singular.
\item $A$ is isomorphic to an algebra of the form $\widehat B/<\varphi>$, where $B$ is a  representation-infinite tilted algebra of Euclidean type  $\widetilde{\mathbb{A}}_m$  and $\varphi$ is a square root  of Nakayama automorphism $\nu_{\widehat B}$ of  $\widehat B$, but $A$ is not isomorphic to the four-dimensional local algebra $K<X,Y>/<X^2,Y^2,XY+YX>$, if char $k\neq 2$.
\end{enumerate}
\end{Them}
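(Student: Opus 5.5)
The plan is to prove the cycle of implications $(1)\Rightarrow(3)\Rightarrow(2)\Rightarrow(1)$. Throughout I would use Skowroński's structure theory of self-injective algebras of Euclidean type (every such algebra is an orbit algebra $\widehat B/G$ with $B$ a Euclidean tilted algebra) and covering theory (Gabriel, Dowbor–Skowroński). I would also use the combinatorial description of 1-domestic Brauer graphs recalled in the introduction.

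For $(1)\Rightarrow(3)$ I would work directly with the Brauer graph $G$ of $A$ and split into its two shapes.
\begin{enumerate}[$(a)$]
\item If $G$ has a unique cycle, of odd length, with all multiplicities one, I take the connected double cover $\widetilde G$ of $G$. In $\widetilde G$ the odd cycle unwinds to a single cycle of even length, and the trees attached to it are duplicated.
\item If $G$ is a tree with two exceptional vertices of multiplicity $2$, I take the double cover $\widetilde G$ branched at the two exceptional vertices. Each exceptional vertex becomes a vertex of multiplicity one, the rest of the tree is doubled, and $\widetilde G$ contains one even cycle.
\end{enumerate}
In both cases $\widetilde G$ has a unique cycle, of even length, with all multiplicities one. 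So it is the Brauer graph of a 2-domestic Brauer graph algebra $C$, and $C\cong T(B)=\widehat B/\langle\nu_{\widehat B}\rangle$ for a representation-infinite tilted algebra $B$ of type $\widetilde{\mathbb A}_m$. The deck involution of $\widetilde G\to G$ induces an automorphism of $C$. I would lift it along the Galois covering $F:\widehat B\to T(B)$ to an automorphism $\varphi$ of $\widehat B$ and check on the objects $e_{(i,m)}$ that $\varphi^2=\nu_{\widehat B}$. Then $\widehat B/\langle\varphi\rangle\cong A$. The excluded four-dimensional local algebra does not arise, because it is not a Brauer graph algebra of the above shapes when $\operatorname{char}k\neq 2$.

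For $(3)\Rightarrow(2)$, the group $\langle\varphi\rangle$ acts freely and admissibly on $\widehat B$, and $\widehat B$ is locally support-finite. Hence the push-down functor $\widehat B$-$\m\to A$-$\m$ is dense by Dowbor–Skowroński. So $\Gamma_A$ is the orbit quiver $\Gamma_{\widehat B}/\langle\varphi\rangle$, which is of Euclidean type $\widetilde{\mathbb A}_m$ and domestic.
\begin{enumerate}[$(a)$]
\item \emph{Symmetry.} I would construct a nondegenerate symmetric associative form on $\widehat B/\langle\varphi\rangle$ from the canonical pairing $B_i\times Q_i\to k$. The point to verify is that $\varphi$ is compatible with this pairing up to the required sign. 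This is precisely where $\operatorname{char}k\neq2$ and the excluded local algebra enter: there the $\varphi$-quotient is only weakly symmetric.
\item \emph{Cartan matrix.} I would compare $C_A$ with the Cartan matrix of $T(B)$, which is $C_B+C_B^{T}$. The latter is singular for type $\widetilde{\mathbb A}$, since its kernel contains the radical vector of the Euler form. I would then show that folding by the involution induced by $\varphi$ acts on this radical vector by $-1$, so the radical vector does not survive in $C_A$ and $C_A$ is non-singular.
\end{enumerate}

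For $(2)\Rightarrow(1)$, by Skowroński's classification a symmetric algebra of Euclidean type $\widetilde{\mathbb A}_m$ is special biserial, hence a Brauer graph algebra, and it is domestic. By Bocian–Skowroński it is then either 1-domestic or 2-domestic. A 2-domestic one is a trivial extension $T(B)$, whose Cartan matrix is singular by the computation in the previous step. The non-singularity assumption therefore forces $A$ to be 1-domestic.

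The main obstacle is step $(a)$ of $(3)\Rightarrow(2)$, namely proving symmetry of $\widehat B/\langle\varphi\rangle$, together with the exceptional local case, rather than the covering-theoretic parts. I would first attempt it by writing $\varphi$ explicitly on the quiver with relations of $\widehat B$. By the classification of exceptional $\widetilde{\mathbb A}$-tilted algebras given in Theorem \ref{Euclidean-exceptional}, $\varphi$ is a reflection-type shift composed with a quiver automorphism. I would check the symmetric form arrow by arrow, and treat the one-vertex degenerate case, which yields $K\langle X,Y\rangle/\langle X^2,Y^2,XY+YX\rangle$, separately.
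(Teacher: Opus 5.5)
The paper gives no proof of this theorem. It is quoted from Bocian--Skowro\'nski, and only (1)$\Rightarrow$(3) is used later. Your cycle of implications therefore has to stand on its own, and it breaks at the step you flagged: symmetry of $\widehat B/\langle\varphi\rangle$ in (3)$\Rightarrow$(2).

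\textbf{The sign obstruction.} Let $t$ be the canonical form on $\widehat B$, namely evaluation on the one-dimensional spaces $\widehat B(x,\nu_{\widehat B}x)$. Since $\varphi$ commutes with $\nu_{\widehat B}$, $t\circ\varphi$ is a Frobenius form with the same Nakayama automorphism. As $\widehat B$ is connected with $\widehat B(x,x)=k$, this gives $t\circ\varphi=s\,t$ for a scalar $s$, and $s^2=1$ because $t\circ\nu_{\widehat B}=t$.
\begin{enumerate}[$(a)$]
\item If $s=1$, the form $t$ descends and $A$ is symmetric.
\item If $s=-1$ and $\operatorname{char}k\neq 2$, twist $t$ by a sign alternating along $\varphi$-orbits. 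This gives a $\varphi$-invariant form whose Nakayama automorphism on $A$ is, up to an inner one, the grading automorphism acting by $(-1)^j$ on the $\varphi^j$-component. That automorphism is not inner: $T(B)=\widehat B/\langle\varphi^2\rangle$ is connected, so some closed walk in the quiver of $A$ has odd degree. Hence $A$ is weakly symmetric but not symmetric.
\end{enumerate}
Everything therefore hinges on $s=1$, and $s=-1$ is not confined to the Kronecker algebra.

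\textbf{Why $s=-1$ occurs beyond the Kronecker case.} Take a square root $\varphi$ with $A=\widehat B/\langle\varphi\rangle$ a 1-domestic Brauer graph algebra whose graph has an odd cycle and $m\equiv 1$; such $\varphi$ exists by (1)$\Rightarrow$(3).
\begin{enumerate}[$(a)$]
\item Let $\varepsilon$ multiply one arrow of each special cycle by $-1$, in a presentation by arrows homogeneous for the covering grading. It multiplies both sides of every relation $C_u(e)=C_v(e)$ by $-1$, so it is a graded automorphism.
\item It lifts to an involution $\widetilde\varepsilon$ of $\widehat B$ commuting with $\varphi$, with $t\circ\widetilde\varepsilon=-t$.
\item Then $(\varphi\widetilde\varepsilon)^2=\nu_{\widehat B}$, but $\varphi\widetilde\varepsilon$ has the opposite sign $s$.
\end{enumerate}

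\textbf{A concrete counterexample.} Take the Brauer graph consisting of a loop $e_1$ at $v$ and a pendant edge $e_2$. The quiver has arrows $\alpha\colon e_1\to e_1$, $\beta\colon e_1\to e_2$, $\gamma\colon e_2\to e_1$, with the relation $\gamma\beta\alpha=\alpha\gamma\beta$. Since $\deg\beta+\deg\gamma$ is odd, the quotient by $\varphi\widetilde\varepsilon$ has the relation $\gamma\beta\alpha=-\alpha\gamma\beta$ instead. Its Nakayama automorphism sends the loop $\alpha$ to $-\alpha$, which no inner automorphism does modulo $\operatorname{rad}^2$. The algebra is $10$-dimensional, so it is not the excluded local algebra.

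So no arrow-by-arrow check can succeed. In characteristic $\neq 2$, (3)$\Rightarrow$(2) only holds for square roots with $t\circ\varphi=t$, equivalently when $A$ is assumed symmetric. As quoted, with only the four-dimensional exception, the implication overreaches.

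\textbf{Two smaller points.}
\begin{enumerate}[$(a)$]
\item In (1)$\Rightarrow$(3), checking $\varphi^2=\nu_{\widehat B}$ on the objects $e_{(i,m)}$ determines $\varphi^2$ only up to an object-fixing automorphism. You must choose the lift so that equality holds as functors, and by the above also so that $s=1$.
\item In the Cartan step, the claim that the involution acts by $-1$ on the kernel vector of $C_B+C_B^{T}$ is the whole content and is left unproved. It is true: with $m\equiv1$ the Cartan matrix of $T(B)$ is $N^{T}N$ for the vertex--edge incidence matrix $N$ of its Brauer graph. Its kernel is spanned by the alternating vector on the even cycle, and the deck involution reverses that vector's sign.
\end{enumerate}
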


\begin{Them}$($\cite[Theorem 2]{BoS}$)$\label{symmetric-algebras-of-Euclidean-type-2}
Let $A$ be a basic indecomposable algebra. Then the following conditions are equivalent.
\begin{enumerate}[$(1)$]
\item $A$ is isomorphic to  a  2-domestic Brauer graph algebra. 	
\item $A$ is a symmetric algebra of Euclidean type $\widetilde{\mathbb{A}}_m$ and the Cartan matrix of $A$ is singular.
\item $A$ is isomorphic to the trivial extension $T(B)$, where $B$ is a  representation-infinite tilted algebra of Euclidean type $\widetilde{\mathbb{A}}_m$.		
	\end{enumerate}
\end{Them}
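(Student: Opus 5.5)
Since this is \cite[Theorem 2]{BoS}, the plan is to reconstruct the argument as the cycle (3)$\Rightarrow$(2)$\Rightarrow$(3) together with (1)$\Leftrightarrow$(3). Throughout I use two tools: the Galois covering $F:\widehat{B}\rightarrow\widehat{B}/<\nu_{\widehat{B}}>\cong T(B)$, and Happel's equivalence $\widehat{B}$-$\stmod\simeq D^{b}(B\text{-}\m)$.

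\emph{Step (3)$\Rightarrow$(2).} First, $T(B)=B\ltimes \D(B)$ is symmetric. Since $B$ is tilted of type $\widetilde{\mathbb{A}}_m$, we have $D^{b}(B\text{-}\m)\simeq D^{b}(k\widetilde{\mathbb{A}}_m\text{-}\m)$. Through Happel's equivalence, the stable AR-quiver of $\widehat{B}$ therefore consists of components $\mathbb{Z}\widetilde{\mathbb{A}}_m$ and tubes. The push-down functor $F_\lambda$ is dense because $\widehat{B}$ is locally support-finite, and it preserves AR-sequences. Hence $T(B)$ is of Euclidean type $\widetilde{\mathbb{A}}_m$. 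For the Cartan matrix, $\dim e_iT(B)e_j=\dim e_iBe_j+\dim e_jBe_i$, so
\[
C_{T(B)}=C_B+C_B^{T}=C_B^{T}\bigl(C_B^{-T}C_B+I\bigr).
\]
This matrix is singular exactly when the Coxeter matrix $\Phi_B=-C_B^{T}C_B^{-1}$ (or its transpose-conjugate) has eigenvalue $1$. That holds for $\Phi_B$ because $\Phi_B$ is derived invariant and, for $k\widetilde{\mathbb{A}}_m$, fixes the null root $\delta$.

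\emph{Step (2)$\Rightarrow$(3).} I would use Skowro\'nski's structure theorem for self-injective algebras of Euclidean type: $A\cong\widehat{B}/G$ with $B$ tilted of type $\widetilde{\mathbb{A}}_m$ and $G$ an admissible infinite cyclic group generated by $\psi=\sigma\nu_{\widehat{B}}^{r}$ or a root of such an element. Symmetry of $A$ forces the Nakayama functor of $A$ to be isomorphic to the identity on objects. Comparing how $\nu_{\widehat{B}}$ acts on the $\mathbb{Z}$-indexed slices of $\widehat{B}$ then forces $G=<\nu_{\widehat{B}}>$ or $G=<\varphi>$ with $\varphi^{2}=\nu_{\widehat{B}}$. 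In the latter case $B$ is exceptional by Theorem \ref{Euclidean-exceptional}. I would then show that $C_{\widehat{B}/<\varphi>}$ is nonsingular, e.g.\ by comparing it with the "half" of $C_B+C_B^{T}$ given by the twisted sum $C_B+P_{\varphi}C_B^{T}$. Here $\varphi$ acts on $\delta$ with eigenvalue $-1$, so the kernel vector disappears. Singularity of the Cartan matrix thus selects $G=<\nu_{\widehat{B}}>$, giving $A\cong T(B)$. Pinning down $G$ (ruling out other roots and twists by the symmetry condition) is the step I expect to be the main obstacle.

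\emph{Step (1)$\Leftrightarrow$(3).} A tilted algebra of type $\widetilde{\mathbb{A}}_m$ is gentle, and the trivial extension of a gentle algebra is the Brauer graph algebra of its ribbon graph, with all multiplicities $1$ (Schroll). The ribbon graph of a gentle tilted $\widetilde{\mathbb{A}}_m$ algebra has a unique cycle. The balanced clockwise/anticlockwise condition on that cycle (equivalently, the singular Cartan matrix) forces the cycle to have even length. So $T(B)$ is a 2-domestic Brauer graph algebra. Conversely, for a Brauer graph with a unique even cycle and trivial multiplicities, one chooses an admissible cut: one arrow at each vertex, taken consistently around the cycle. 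The cut algebra $B$ is gentle of type $\widetilde{\mathbb{A}}_m$ with $T(B)\cong A$, and it is representation-infinite tilted because the cut along the even cycle is balanced. Combined with the 1-/2-domestic dichotomy of \cite{BoS}, this closes the equivalence.
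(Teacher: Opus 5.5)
The paper does not prove this statement. It quotes it from \cite{BoS} and uses it as a black box, so your reconstruction has to stand on its own. Two of your steps are sound. In (3)$\Rightarrow$(2) you use $C_{T(B)}=C_B+C_B^{T}$ and the eigenvalue $1$ of the Coxeter matrix. In (3)$\Rightarrow$(1) you use gentleness, Schroll's theorem, and $|V|=|E|$ for the ribbon graph; the even cycle then follows from $C_A=NN^{T}$, with $N$ the unsigned incidence matrix, which is singular exactly for bipartite unicyclic graphs. However, there are two genuine gaps.

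\emph{(2)$\Rightarrow$(3).} You flag this step as the obstacle, but the mechanism you propose cannot work.

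\begin{itemize}
\item The only consequence of symmetry you use is that the Nakayama permutation is trivial, i.e.\ weak symmetry.
\item Weak symmetry does not give $G=\langle\nu_{\widehat B}\rangle$ or $G=\langle\varphi\rangle$ with $\varphi^{2}=\nu_{\widehat B}$. It only gives $G=\langle\rho\nu_{\widehat B}\rangle$, or $G=\langle\psi\rangle$ with $\psi^{2}=\rho\nu_{\widehat B}$, where $\rho$ is a rigid automorphism fixing all objects.
\item The twisted trivial extensions $\widehat B/\langle\rho\nu_{\widehat B}\rangle$ are weakly symmetric of Euclidean type. They have exactly the same singular Cartan matrix as $T(B)$, but in general they are neither symmetric nor isomorphic to $T(B)$.
\item Concretely, take the Kronecker algebra and let $\rho$ rescale one arrow by $\lambda\neq 1$. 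The relations are $\alpha\alpha^{*}=\beta\beta^{*}$ and $\alpha^{*}\alpha=\lambda\beta^{*}\beta$, plus zero relations. A symmetrizing form $t$ would give $t(\alpha^{*}\alpha)=t(\alpha\alpha^{*})=t(\beta\beta^{*})=t(\beta^{*}\beta)=\lambda^{-1}t(\alpha^{*}\alpha)$. This is impossible for a nondegenerate form.
\end{itemize}

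So you need a step in which the symmetric form itself enters. One way to avoid the rigid-twist analysis entirely is to prove (2)$\Rightarrow$(1) instead:
\begin{itemize}
\item $B$ is gentle, so $\widehat B$ and its Galois quotient $A$ are special biserial.
\item A symmetric special biserial algebra is a Brauer graph algebra; this is where full symmetry is used.
\item Euclidean type gives domesticity.
\item A singular Cartan matrix rules out both the tree case and the odd-cycle case.
\end{itemize}

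\emph{(1)$\Rightarrow$(3).} An admissible cut $B$ satisfies $T(B)\cong A$ and is gentle with one cycle. It need not be tilted, nor even derived equivalent to $k\widetilde{\mathbb{A}}_m$.

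\begin{itemize}
\item Suppose ``consistently around the cycle'' means that the kept arrows all run the same way round. Then the cut is maximally unbalanced. For the square Brauer graph (four edges, all valencies $2$) it yields the radical-square-zero Nakayama algebra on an oriented $4$-cycle, which is self-injective of infinite global dimension.
\item The Assem--Skowro\'nski clock condition holds exactly when half of the cycle vertices keep the arc on one side of the cycle and half keep the arc on the other side. Indeed, the difference between clockwise and anticlockwise relations on the cycle of $Q_B$ equals the difference of these two counts. This is where evenness is used.
\item Even a balanced cut gives only an \emph{iterated} tilted algebra. Cuts on the attached trees can produce a path $\cdot\xrightarrow{a}\cdot\xrightarrow{f}\cdot\xrightarrow{g}\cdot$ with $af=fg=0$, hence a simple module of projective dimension $3$.
\end{itemize}

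So you must do one of two things. Either choose the cuts on the attached trees explicitly and prove that the result is a representation-infinite tilted algebra. Or invoke the theorem that an iterated tilted algebra of Euclidean type is carried by reflections $S^{+}_{i}$ to a tilted (Euclidean) algebra $B'$, together with $\widehat{S^{+}_{i}B}=\widehat B$, so that $T(B')\cong T(B)\cong A$.
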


The following theorem presents a full characterization for domestic Brauer graph algebras. 
\begin{Them}$($\cite{BoS}, \cite[Theorem 5.1]{S0}$)$\label{domestic-Brauer-graph-algebra}
Let $A$ be a Brauer graph algebra with Brauer graph $G$. Then
\begin{enumerate}[$(1)$]
\item $A$  is $1$-domestic if and only if one of the following holds:
\begin{enumerate}[$(a)$]
\item $G$ is a tree with $m(i)=2$ for exactly two vertices $i_{0}, i_{1}\in G_{0}$ and $m(i)=1$ for all $i\in G_{0}, i\neq i_{0}, i_{1}$.
\item There is exactly one cycle in $G$ and this cycle has odd length, and all multiplicities of edges are one, that is, $m\equiv1$.
\end{enumerate}
\item $A$ is $2$-domestic if and only if  there is exactly one cycle in $G$ and this cycle has even length, and $m\equiv1$.
\item There is no $n$-domestic  Brauer graph algebra for $n\geq3$.
\end{enumerate}
\end{Them}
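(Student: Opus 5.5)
The plan is to treat a Brauer graph algebra $A$ with Brauer graph $G$ as a special biserial (string) algebra and to count one‑parameter families of indecomposables. Over a special biserial algebra, the non‑projective indecomposables are string modules and band modules. In a fixed dimension, all but finitely many isoclasses are band modules. Hence $A$ is $n$-domestic exactly when there are precisely $n$ primitive bands up to rotation and inversion. Everything therefore reduces to enumerating bands of $A$ in terms of the combinatorics of $G$.

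First I would translate bands into closed walks in $G$. A string of $A$ alternates between direct and inverse maximal subpaths. Each such subpath corresponds to moving around the cyclic ordering at a vertex of $G$ from one half‑edge to another, never completing a full turn of length $m(v)\cdot\mathrm{val}(v)$. Consequently a band is the same as a closed walk in $G$ in which
\begin{itemize}
\item consecutive steps alternate between the two "sides" (direct versus inverse), and
\item the walk is not a proper power.
\end{itemize}
Next I would exclude the representation‑finite cases: a Brauer tree with at most one exceptional vertex has no bands. Then I would show that $A$ fails to be domestic whenever $G$ contains two independent "turning resources", by producing two bands $w_1,w_2$ through a common edge. In that situation every word in $w_1,w_2$ is again a band, which gives infinitely many primitive bands. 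The resources to check are:
\begin{itemize}
\item two distinct cycles;
\item a cycle together with a vertex of multiplicity $\ge 2$;
\item a tree with at least three exceptional vertices;
\item a tree with two exceptional vertices, one of multiplicity $\ge 3$.
\end{itemize}
For the last item, the extra turns available at the multiplicity‑$3$ vertex give the second independent band.

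It remains to count bands in the surviving cases, which are precisely those listed in (1)(a), (1)(b) and (2); this also proves (3).
\begin{itemize}
\item \emph{Tree, $m(i_0)=m(i_1)=2$, all other multiplicities $1$.} A band must turn around once fully at $i_0$, travel along the unique path to $i_1$, turn fully there, and return. This gives exactly one band, so $A$ is $1$-domestic.
\item \emph{One cycle, $m\equiv 1$.} A band must wind around the cycle, and the direct/inverse labels alternate along it. If the cycle has even length, the labelling closes up after one circuit. The two orientations then give two non‑equivalent bands, so $A$ is $2$-domestic.
\item \emph{One cycle of odd length, $m\equiv 1$.} The labelling closes up only after two circuits. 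The two orientations are then identified under inversion, leaving a single band, so $A$ is $1$-domestic.
\end{itemize}

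As a consistency check, these counts should match Theorems \ref{symmetric-algebras-of-Euclidean-type-1} and \ref{symmetric-algebras-of-Euclidean-type-2}: the odd‑cycle and two‑exceptional‑vertex cases are exactly those where the Cartan matrix of $A$ is non‑singular.

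The main obstacle is the parity/alternation bookkeeping in the band‑to‑walk dictionary. Two points need care:
\begin{itemize}
\item showing that in the odd‑cycle case the doubled walk is primitive and is equivalent to its reverse;
\item in the non‑domestic cases, checking that the constructed words really are bands, that is, that no concatenation creates a forbidden full turn or a relation path.
\end{itemize}
I would handle both by an explicit local analysis at a single vertex of $G$, recording the entering and leaving half‑edges and the side label at each step.
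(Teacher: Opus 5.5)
Your outline is correct, but it does not follow the paper. The paper does not prove this statement at all. It imports it from \cite{BoS} and \cite[Theorem 5.1]{S0}, next to Theorems \ref{symmetric-algebras-of-Euclidean-type-1} and \ref{symmetric-algebras-of-Euclidean-type-2}, which realize $1$- and $2$-domestic Brauer graph algebras as $\widehat{B}/\langle\varphi\rangle$ and $T(B)$ for tilted algebras $B$ of type $\widetilde{A}_m$.

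You instead count bands of the string algebra $A/\mathrm{soc}(A)$ directly on $G$. This rests on two local facts:
\begin{enumerate}
\item a turn between distinct half-edges at $v$ can always be realized either as a direct or as an inverse path of length less than $m(v)\,\mathrm{val}(v)$;
\item a turnaround at $v$ is possible exactly when $m(v)\geq 2$.
\end{enumerate}
Your counts are right: one band for the tree with two vertices of multiplicity $2$, two for an even cycle, one for an odd cycle, and infinitely many in every other representation-infinite case. Part (3) then falls out of the exhaustive case split.

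Your route is elementary and exhibits the bands, hence the one-parameter families, explicitly on $G$. The cited route places the statement inside the covering-theoretic structure theory ($\widehat{B}\to T(B)\to A$, Euclidean type, Cartan matrices). That structure is what Sections 3--5 of the paper actually use, at the cost of tilting and covering machinery.

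Three points to tighten when you write it out:
\begin{enumerate}
\item Make explicit that primitivity is imposed on the \emph{labelled} walk, meaning the half-edge transitions together with their direct/inverse labels and rotation amounts. Applied to the underlying walk alone, your dictionary would discard the odd-cycle band, since it traverses the circuit twice and is a proper power as an unlabelled walk.
\item The two even-cycle bands are inequivalent because inversion reverses the orientation \emph{and} swaps the labels. So if you fix the label at one cycle vertex, the two orientations cannot be matched. In the doubled odd circuit, by contrast, every cycle vertex carries both labels, which is why the orientations get identified there.
\item To deduce non-domesticity from infinitely many bands, note that the number of one-parameter families needed in dimension $d$ is at least the number of bands whose length divides $d$. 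Taking $d$ to be a common multiple of more and more band lengths shows this is unbounded.
\end{enumerate}
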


\begin{Them}$($\cite[Theorem 4.4, Corollary 4.5]{D1}$)$\label{two-domestic-Brauer-graph-algebras} 
Let $A$ be a representation-infinite domestic Brauer graph algebra with Brauer graph $G$ with $n$ edges. If $G$ has a cycle, then it is unique. Let $n_{1}$ be the number of {\rm(}additional{\rm)} edges on the inside of the cycle and $n_{2}$ the number of {\rm(}additional{\rm)} edges on the outside of the cycle.
In the notation above, 
\begin{enumerate}[$(1)$]
\item If $A$ is $1$-domestic, then $m=1$ and $p+q=2n$. Furthermore,
\begin{enumerate}[$(a)$]
	\item If $G$ is a tree, then $p=q=n$. 
	\item If $G$ has exactly one cycle in $G$ and this cycle has {\rm(}odd{\rm)} length $\ell$, then $p=\ell+2n_{1}$ and $q=\ell+2n_{2}$.
\end{enumerate}
\item If $A$ is $2$-domestic and the unique cycle of $Q$ has {\rm(}even{\rm)} length $\ell$, then $m=2$ and $p=\ell/2+n_{1}$ and $q=\ell/2+n_{2}$ such that $p+q=n$.
\end{enumerate}
\end{Them}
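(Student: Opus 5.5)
The plan is to read off all the data $m,p,q$ from the stable AR-quiver. Recall (Erdmann--Skowro\'nski, and Bocian--Skowro\'nski \cite{BoS}) that the stable AR-quiver $_{s}\Gamma_{A}$ of a representation-infinite domestic Brauer graph algebra has the following shape. There are $m$ Euclidean components of the form $\mathbb{Z}\widetilde{A}_{p,q}$. There are exceptional tubes of ranks $p$ and $q$, each occurring $m$ times. Every other component is a homogeneous tube. Here $m$ is the number of one-parameter families, so $m$ equals the domesticity. This gives $m=1$ in case (1) and $m=2$ in case (2) at once, by Theorem \ref{domestic-Brauer-graph-algebra}. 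What remains is to compute the ranks $p,q$ of the exceptional tubes combinatorially from $G$.

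For this I would locate the modules at the mouths of the exceptional tubes. These are the uniserial string modules $U_{e,v}=\rad P_{e}/\mathrm{soc}\,P_{e}$ parts, namely the ``hook''/maximal uniserial modules associated to a half-edge $(e,v)$ of $G$. One checks that $\Omega$ sends such a module to another such module, along the \emph{Green walk}. From the half-edge at $v$ one passes to the successor edge in the cyclic order at the other endpoint, and so on. Since $A$ is symmetric, $\tau=\Omega^{2}$. Hence a $\tau$-orbit of mouth modules corresponds to a double-stepped Green walk. The rank of the corresponding tube is the length $L$ of the closed Green walk if $L$ is odd, and $L/2$ if $L$ is even, in which case the $\Omega$-orbit splits into two $\tau$-orbits.

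Now I compute the closed Green walks case by case.

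\begin{enumerate}[$(a)$]
\item If $G$ is a tree, a Green walk goes once around the planar tree and traverses each edge twice, so $L=2n$. The two multiplicity-2 vertices are precisely what makes the two sides of the walk close up into a single period. This $\Omega$-orbit then splits into two $\tau$-orbits of length $n$, so $p=q=n$.
\item If $G$ has a unique odd cycle of length $\ell$, there are two Green walks. The inner walk runs around the inside of the cycle and traverses each inner additional edge twice, giving length $\ell+2n_{1}$. The outer walk likewise has length $\ell+2n_{2}$. Both lengths are odd, so $p=\ell+2n_{1}$ and $q=\ell+2n_{2}$. In both subcases $p+q=2n$, because $\ell+n_{1}+n_{2}=n$.
\item If the unique cycle has even length $\ell$, the same walks have even lengths $\ell+2n_{1}$ and $\ell+2n_{2}$. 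Each splits into two $\tau$-orbits, producing two tubes of rank $\ell/2+n_{1}$ and two of rank $\ell/2+n_{2}$, which matches $m=2$. Summing gives $p+q=n$.
\end{enumerate}

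As a consistency check one can count the non-projective simple modules: the ranks must satisfy the additive rank formula for $\mathbb{Z}\widetilde{A}_{p,q}$.

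The main obstacle is the identification of the mouth modules and the verification that $\Omega$ acts on them exactly by the Green walk. In particular one must check what happens at vertices of multiplicity $2$ in the tree case and at truncated vertices, where $\Omega$ of a uniserial module need not be of the same form. I would handle this by an explicit computation of projective covers of string modules, as in Duffield's treatment.
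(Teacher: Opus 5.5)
The step that fails is your choice of modules. As written, you take the summands of $\rad P_e/\mathrm{soc}\,P_e$. These are in general neither at the mouths of the exceptional tubes nor permuted by $\Omega$.

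If $U$ is the summand at $v$ (top $S_{e^{+v}}$, socle $S_{e^{-v}}$), its projective cover is $P_{e^{+v}}$. Then $\Omega U$ consists of the last two layers $S_e$, $S_{e^{+v}}$ of the $v$-branch together with the whole other branch of $P_{e^{+v}}$. So $\Omega U$ has two tops whenever the other endpoint of $e^{+v}$ is not truncated.

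The paper's own Example~\ref{1-domestic-1} shows this. There $\rad P_2/\mathrm{soc}\,P_2=S_3\oplus\begin{smallmatrix}4\\1\end{smallmatrix}$, both summands lie in the Euclidean component (Figure 3), and $\Omega(S_3)=\rad P_3$ is not uniserial. So the projective-cover computation you defer to breaks down at ordinary vertices, not only at multiplicity-$2$ or truncated ones.

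The right modules are one layer longer. Put $V_{(e,v)}:=P_e/U_w$, where $U_w$ is the uniserial submodule of $P_e$ formed by the branch around the other endpoint $w$ of $e$. Then $V_{(e,v)}$ is uniserial with top $S_e$ and length $m(v)\,\mathrm{val}(v)$, and $\rad V_{(e,v)}$ is your heart summand. With no case distinction, $\Omega V_{(e,v)}=U_w=V_{(e^{+w},w)}$. At a truncated $w$ this is just $S_e$, and multiplicities only change lengths; for loops, index by half-edges. So $\Omega$ acts on these modules by the face-tracing walk of the ribbon graph.

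In Example~\ref{1-domestic-1} the eight modules $V_{(e,v)}$ form $\Omega$-orbits of lengths $3$ and $5$, and they are exactly the mouths of the tubes in Figures 1 and 2. With this repair your parity count is correct. A tree has one face of length $2n$; a unicyclic graph has faces of lengths $\ell+2n_1$ and $\ell+2n_2$; an orbit splits into two $\tau=\Omega^2$-orbits exactly when its length is even. This is essentially the Green-walk argument behind the result of \cite{D1}, which the paper only quotes.

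Two further points need attention.

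\begin{itemize}
\item In (a), your claim that the multiplicity-$2$ vertices make the walk close up is wrong. The walk depends only on the cyclic orderings, and on every plane tree it is a single orbit of length $2n$. Multiplicities enter only through the hypothesis that $A$ is representation-infinite and $1$-domestic, which is what gives the Bocian--Skowro\'nski shape with $m=1$.
\item You still have to show that each $V_{(e,v)}$ sits at the mouth of its tube. Being $\Omega$-periodic, it lies in a tube, but the mouth position needs the string-module description of the AR-sequence starting at a maximal direct string. You also have to show that the resulting tubes are exactly the $2m$ tubes of ranks $p$ and $q$.
\end{itemize}

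Your analysis yields exactly $2m$ $\tau$-orbits, which settles the identification when all ranks are at least $2$. In degenerate cases with a rank-one orbit (for instance $n=1$, or a loop with empty interior, where $p=1$), shape alone cannot distinguish it from a homogeneous tube. There you should use $m(p+q)=2n$. This holds because $\widetilde{A}_{p,q}$ has as many vertices as $B$ has simples: $2n$ for $A=\widehat{B}/\langle\varphi\rangle$ and $n$ for $A=T(B)$.
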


\subsection{Covering theory}
We briefly recall the definition of covering functor and Galois covering.
\begin{Def}[{\cite[Definition 3.1]{BG}}] \label{covering-functor}
Let $F:\mathcal{C}\rightarrow \mathcal{D}$ be a k-linear functor between two k-categories. F is called a {\bf covering functor} if the maps
\begin{equation}\bigoplus_{Fz=b} \mathcal{C}(x,z)\rightarrow \mathcal{D}(a,b) \text{ and }  \bigoplus_{Ft=a} \mathcal{C}(t,y)\rightarrow \mathcal{D}(a,b)
\end{equation}
which are induced by $F$, are bijective for any two objects $a$ and $b$ of $\mathcal{D}$. Here $t$ and $z$ range over all objects of $\mathcal{C}$ such that $Fz=b\ and\  Ft=a$ respectively; 
The maps are supposed to be bijective for all $x$ and $y$ chosen among the $t$ and $z$ respectively.
\end{Def}	

\begin{Def} [{\cite[Section 3.1]{G}}]\label{galois-covering-functor}
Let $M,N$ be locally finite dimensional categories. A covering functor $F: M\rightarrow N$ is called a {\bf Galois covering} if $F$ is surjective on objects and there exists a group $G$ of $k$-linear automorphisms of $M$ which acts freely on (objects of) $M$, such that $F\circ g=F$ for any $g\in G$ and $G$ acts transitively on $F^{-1}(a)$ for each object $a$ of $N$.
\end{Def}

By Theorem \ref{Euclidean-exceptional} and Theorem \ref{symmetric-algebras-of-Euclidean-type-1},  one  1-domestic Brauer graph algebra $A$ is of the form $\widehat{B}/G$, where $B$ is an exceptional Euclidean algebra and $G$ is an infinite cyclic group  $<\varphi>$ generated by a square root $\varphi$ of Nakayama automorphism $\nu_{\widehat{B}}$ of $\widehat{B}$. We know that  $G$ is an admissible group of automorphism of  $\widehat{B}$ and there is a canonical Galois covering 
\[F:\widehat{B}\rightarrow A=\widehat{B}/<\varphi>.\]
Since $\widehat{B}$ is locally support-finite, by Dowbor-Skowronski  \cite[Main Theorem]{DS}, the push down functor $F_{\lambda}\colon  \widehat{B}$-$\m$$\rightarrow A$-$\m$ induced by $F$ is dense. Note that the push down functor $F_{\lambda}$ is exact (see \cite[Section II.1]{LLXC} for more details).  Since $F_{\lambda}$ preserves Auslander-Reiten sequences (please refer to \cite[Section 2.7]{G}), we have \[\Gamma_{A}=\Gamma_{\widehat{B}/<\varphi>}=\Gamma_{\widehat{B}}/<\varphi>.\]

It is known \cite{ANS} that  the AR-quiver $\Gamma_{\widehat{B}}$ of  $\widehat{B}$ is of the form
\begin{equation}\label{AR-quiver-1}
\Gamma_{\widehat{B}}=\bigvee_{n\in\mathbb{Z}}(\mathcal{Y}_{n}\vee{\mathcal{C}_{n}})
\end{equation}
where stable connected  component $_{s}\mathcal{Y}_{i}=\mathbb{Z}\widetilde{A}_{m}$  are Euclidean components and $_{s}\mathcal{C}_{i}$ are $\mathbb{P}_{1}(k)$-families of quasi-tubes of the same tubular type. The action of Nakayama automorphism $\nu_{\widehat{B}}$ in the connected component of  $\Gamma_{\widehat{B}}$  states as follows.  
\begin{equation}\label{AR-quiver-2}
\nu_{\widehat{B}}(\mathcal{Y}_{i})={\mathcal{Y}_{i+2}},\ \text{and} \ \nu_{\widehat{B}}(\mathcal{C}_{i})={\mathcal{C}_{i+2}}
\end{equation}for  any  $i\in\mathbb{Z}$.

Since $\varphi$ is a square root of Nakayama automorphism $\nu_{\widehat{B}}$, there is  a self-equivalence of module category $\widehat{B}$-$\m$ of the algebra $\widehat{B}$ induced by  $\varphi$, still denoted by $\varphi$ if there is no confusion. Therefore it induces a stable equivalence $\overline{\varphi}$ of   $\widehat{B}$-$\stmod$:
\[\overline{\varphi}\colon \widehat{B}{\text-}\stmod\longrightarrow\widehat{B}{\text-}\stmod.\]
It follows that $\overline{\varphi}$ commutes with the syzygy functor $\Omega_{\widehat{B}}$, that is,  $\overline{\varphi}\Omega_{\widehat{B}}=\Omega_{\widehat{B}}\overline{\varphi}$. Thus the following diagram is commutative:
\begin{equation}\label{comm-diagram}
\begin{aligned}
\xymatrix{\widehat{B}{\text-}\stmod \ar[r]^-{\overline{\varphi}} \ar[d]_-{\Omega_{\widehat{B}}}&\widehat{B}{\text-}\stmod\ar[d]^-{\Omega_{\widehat{B}}} \\
\widehat{B}{\text-}\stmod  \ar[r]^-{\overline{\varphi}}&\widehat{B}{\text-}\stmod.}
\end{aligned}
\end{equation}
By equation (\ref{AR-quiver-2}),  the action of $\overline{\varphi}$ satisfies 
\begin{equation}\label{AR-quiv-3}
\overline{\varphi}( {_{s}\mathcal{Y}_{i}})={_{s}\mathcal{Y}_{i+1}}\  \text{and} \ \  \overline{\varphi}({_{s}\mathcal{C}_{i}})={_{s}\mathcal{C}_{i+1}}
\end{equation}
for any $i\in\mathbb{Z}.$ Note that $\overline{\varphi}$ sends $\tau$-periodic (resp. non-$\tau$-periodic) modules to $\tau$-periodic (resp. non-$\tau$-periodic) modules.
Moreover, $F_{\lambda}$ induces a covering functor  from $\widehat{B}$-$\stmod$ to $A$-$\stmod$.
Thus there are isomorphism as follows.
\begin{equation}\label{covering-iso-1}
	\begin{aligned}
		&\StHom_{A}(M,N)\cong\bigoplus_{g(X)=M,g\in G}\StHom_{\widehat{B}}(X,N)=\bigoplus_{i\in \mathbb{Z}}\StHom_{\widehat{B}}(\overline{\varphi}^{i}(M),N),\\
		&\StHom_{A}(M,N)\cong\bigoplus_{g(Y)=N,g\in G}\StHom_{\widehat{B}}(M,Y)=\bigoplus_{i\in \mathbb{Z}}\StHom_{\widehat{B}}(M,\overline{\varphi}^{i}(N)).
	\end{aligned}
\end{equation} 

\section{Simple-minded systems over 1-domestic Brauer algebras}
In this section, we  study simple-minded systems over  1-domestic Brauer algebras by covering theory and the characterization of simple-minded systems over  2-domestic Brauer algebras in \cite{Z1}.
\begin{Lem}\label{covering-1-2-domestic}
Let $A$ be a $1$-domestic Brauer graph algebra.  Then there is a $2$-domestic Brauer graph algebra $C$ and a dense covering functor $\overline{F}: C$-$\stmod\rightarrow A$-$\stmod$.
\end{Lem}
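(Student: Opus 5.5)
The plan is to obtain $C$ as the trivial extension $T(B)=\widehat{B}/\langle\nu_{\widehat{B}}\rangle$ of the same Euclidean algebra $B$ used to present $A$. Then $\overline{F}$ is the stable functor induced by pushing down along the double cover $T(B)\to A$, which comes from the index-two inclusion $\langle\varphi^{2}\rangle\subset\langle\varphi\rangle$.

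First, by Theorem \ref{symmetric-algebras-of-Euclidean-type-1} together with Theorem \ref{Euclidean-exceptional}, write $A\cong\widehat{B}/\langle\varphi\rangle$. Here $B$ is a representation-infinite (exceptional) tilted algebra of Euclidean type $\widetilde{\mathbb{A}}_m$, and $\varphi^{2}=\nu_{\widehat{B}}$. Set $C=T(B)\cong\widehat{B}/\langle\nu_{\widehat{B}}\rangle$. By Theorem \ref{symmetric-algebras-of-Euclidean-type-2}, $C$ is a 2-domestic Brauer graph algebra.

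Second, since $\langle\nu_{\widehat{B}}\rangle=\langle\varphi^{2}\rangle$ is a normal subgroup of index $2$ in $\langle\varphi\rangle$, the automorphism $\varphi$ descends to an automorphism $\overline{\varphi}_C$ of $C$ of order $2$. The group $\langle\varphi\rangle/\langle\varphi^{2}\rangle\cong\mathbb{Z}/2$ acts freely on the objects of $C$, because $\langle\varphi\rangle$ acts freely on the objects of $\widehat{B}$. Hence the canonical functor $F':C=\widehat{B}/\langle\varphi^2\rangle\to\widehat{B}/\langle\varphi\rangle=A$ is a Galois covering with group $\mathbb{Z}/2$, and $F\colon\widehat{B}\to A$ factors as $F'\circ F''$ with $F''\colon\widehat{B}\to C$.

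Third, consider the push-down $F'_\lambda\colon C$-$\m\to A$-$\m$. It is exact and sends projectives to projectives. Since the group is finite, it also has the exact adjoint pull-up $F'_\bullet$ on both sides. It therefore induces a functor $\overline{F}\colon C$-$\stmod\to A$-$\stmod$. The covering property (the stable analogue of (\ref{covering-iso-1}), now with the group $\mathbb{Z}/2$) follows from the formula $\Hom_A(F'_\lambda M,F'_\lambda N)\cong\bigoplus_{g}\Hom_C(M,{}^{g}N)$ together with the fact that maps factoring through projectives correspond under this isomorphism.

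Density comes from the factorization $F_\lambda=F'_\lambda F''_\lambda$. By Dowbor--Skowro\'nski, $F_\lambda$ is dense because $\widehat{B}$ is locally support-finite. So every indecomposable $A$-module is of the form $F'_\lambda(F''_\lambda X)$, and hence $\overline{F}$ is dense.

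The main obstacle is checking carefully that the stable functor is a covering functor in the sense of Definition \ref{covering-functor}. The key point is that a map $F'_\lambda M\to F'_\lambda N$ factors through a projective $A$-module exactly when each of its components factors through a projective $C$-module. I would verify this using the adjunction $(F'_\lambda,F'_\bullet)$, the identity $F'_\bullet F'_\lambda\cong\mathrm{id}\oplus\overline{\varphi}_C$, and the fact that both $F'_\lambda$ and $F'_\bullet$ preserve projectives.
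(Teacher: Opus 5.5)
Your proposal is correct and follows essentially the same route as the paper. You write $A\cong\widehat{B}/\langle\varphi\rangle$ and take $C=T(B)\cong\widehat{B}/\langle\nu_{\widehat{B}}\rangle$, which is 2-domestic by Theorem \ref{symmetric-algebras-of-Euclidean-type-2}. You then use the induced Galois covering $C\to C/\langle\varphi\rangle\cong A$, obtain density of the push-down from Dowbor--Skowro\'nski via the factorization through $\widehat{B}$, and pass to stable categories because push-down preserves projectives.

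You also supply details that the paper only asserts or cites from Bongartz--Gabriel: that the $\mathbb{Z}/2$-action on objects of $C$ is free, and the adjunction argument showing that maps factoring through projectives correspond under the Hom-decomposition.
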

\begin{proof}
By Theorem  \ref{Euclidean-exceptional} and \ref{symmetric-algebras-of-Euclidean-type-1},  there is  an exceptional Euclidean algebra $B$ of $\widetilde{A}_{m}$ such that $A=\widehat{B}/<\varphi>$,  where $\varphi$ is a square root of the Nakayama automorphism $\nu_{\widehat{B}}$ of repetitive algebra $\widehat{B}$.  Since  infinite cyclic groups $<\varphi>$ and $<\nu_{\widehat{B}}>$ are both admissible, there are two canonical Galois coverings  $F_{1}: \widehat{B}\rightarrow \widehat{B}/<\nu_{\widehat{B}}>$  and  $F_{2}: \widehat{B}\rightarrow A$.  Since $\widehat{B}$ is locally support-finite, by Dowbor-Skowronski  \cite[Main Theorem]{DS},  the push-down functors 
from $\widehat{B}\text{-}\m$ to $\widehat{B}/<\nu_{\widehat{B}}>\text{-}\m$ and  from $\widehat{B}\text{-}\m$ to $A\text{-}\m$ 
induced by $F_{1}$ and $F_{2}$, respectively, are dense, still denoted by $F_{1}$ and $F_{2}$ respectively. By Theorem \ref{symmetric-algebras-of-Euclidean-type-2},  $\widehat{B}/<\nu_{\widehat{B}}>$  is a $2$-domestic Brauer graph algebra, denoted by $C$. Since $\varphi$ is a square root of $\nu_{\widehat{B}}$, there is a Galois covering  $F: C\rightarrow C/<\varphi>$ induced by $\varphi$. It is clear  that $A\cong C/<\varphi>$. Thus there is a canonical Galois covering  from $C$  to $A$, still denoted by $F$, see the following diagram. \[\xymatrix{ 
	\widehat{B} \ar[d]^-{F_{1}}\ar[drr]^-{F_{2}} \\
	C \ar@{-->}[rr]_-{F} &&  A. 
}\]
The push down functor from $C$-$\m$  to $A$-$\m$ induced by $F$ is a dense covering functor, still denoted by $F$. Since $F$ preserves projective modules(please refer to \cite[Proposition 3.2]{BG}), there is a dense covering functor $\overline{F} $ from $C$-$\stmod$ to $A$-$\stmod$ induced by $F$.
\end{proof}

Let $B$ be an exceptional Euclidean algebra and $S$ a family of objects in $\widehat{B}$-$\stmod$.  By Theorem \ref{Euclidean-exceptional}, there is a square root $\varphi$ of Nakayama automorphism $\nu_{\widehat{B}}$ of $\widehat{B}$. $S$ is said to be {\bf $\overline{\varphi}$-stable}  in $\widehat{B}$-$\stmod$, if $\overline{\varphi}(S)\subseteq S$. 
We consider the induced functor of $\overline{\varphi}$ in $\widehat{B}/<\nu_{\widehat{B}}>$-$\stmod$, still denoted by $\overline{\varphi}$. For a   family of objects $\mathcal{S}$ in $\widehat{B}/<\nu_{\widehat{B}}>$-$\stmod$, we  say $\mathcal{S}$ is  {\bf $\overline{\varphi}$-stable}  in $\widehat{B}/<\nu_{\widehat{B}}>$-$\stmod$, if $\overline{\varphi}(\mathcal{S})\subseteq \mathcal{S}$. 
 
\begin{Them}\label{sms-1-2-domestic}
Let $A$ be a $1$-domestic Brauer graph algebra and  $\overline{F}$ the covering functor from $C$-$\stmod$ to $A$-$\stmod$  in Lemma \ref{covering-1-2-domestic}. 
\begin{enumerate}[$(1)$]
\item If  $\mathcal{S}$ be a $\overline{\varphi}$-stable simple-minded system in $C$-$\stmod$, then $\overline{F}(\mathcal{S})$ is a simple-minded system in $A$-$\stmod$.
\item If  $\mathcal{M}$ is a simple-minded system in $A$-$\stmod$, then $\overline{F}^{-1}(\mathcal{M})$ is a $\overline{\varphi}$-stable simple-minded system in $C$-$\stmod$.
\end{enumerate}
\end{Them}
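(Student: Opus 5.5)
The plan is to compare orthogonality and generation on the two sides of $\overline{F}$, using the covering isomorphisms (\ref{covering-iso-1}) transported to $C$. The covering $F\colon C\to A=C/\langle\varphi\rangle$ has Galois group generated by the automorphism induced by $\varphi$. On $C$ this automorphism has order two, since $\varphi^{2}=\nu_{\widehat{B}}$ becomes trivial modulo $\langle\nu_{\widehat{B}}\rangle$. Hence for $X,Y$ in $C$-$\stmod$ we have
\[\StHom_{A}(\overline{F}X,\overline{F}Y)\cong \StHom_{C}(X,Y)\oplus\StHom_{C}(X,\overline{\varphi}Y).\]
We also need two compatibilities. First, $\overline{F}$ is exact on triangles: it is induced by the exact push-down functor, which preserves projectives, so it commutes with $\Omega$ and sends triangles to triangles. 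Second, $\overline{F}X\cong\overline{F}Y$ for indecomposable $X,Y$ if and only if $Y\cong X$ or $Y\cong\overline{\varphi}X$.

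\textbf{Part (1).} Let $\mathcal{S}$ be a $\overline{\varphi}$-stable simple-minded system in $C$-$\stmod$, and take $S,T\in\mathcal{S}$.
\begin{enumerate}[(i)]
\item Bricks: $\StHom_{A}(\overline{F}S,\overline{F}S)\cong k\oplus\StHom_{C}(S,\overline{\varphi}S)$. If $\overline{\varphi}S\not\cong S$, then $\overline{\varphi}S\in\mathcal{S}$ is a distinct member, so the second summand vanishes by orthogonality. If $\overline{\varphi}S\cong S$, this formula gives dimension two. This fixed-point case is the main obstacle. I would first try to show it cannot occur: by (\ref{AR-quiv-3}), $\overline{\varphi}$ moves every component of $_{s}\Gamma_{\widehat{B}}$ to a different one, so on $C$ the induced involution swaps the two Euclidean components and swaps the two tube families. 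Hence a $\overline{\varphi}$-fixed indecomposable must lie in a tube fixed set-wise, which forces homogeneous tubes paired with themselves. But Theorem \ref{simple-module-and-n-tube} excludes homogeneous tubes from simple-minded systems. The remaining risk is non-homogeneous tubes fixed by the involution; I would handle these by a direct check of the $\mathbb{P}_1$-family action, showing that the exceptional tubes are permuted freely.
\item Orthogonality: for $\overline{F}S\not\cong\overline{F}T$ we have $T\not\cong S$ and $T\not\cong\overline{\varphi}S$. Both $\overline{\varphi}T$ and $T$ then lie in $\mathcal{S}$ and differ from $S$, so both summands vanish.
\item Generation: every indecomposable of $A$-$\stmod$ is $\overline{F}X$ by density. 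Since $X\in\mathcal{F}(\mathcal{S})$, applying the triangle functor $\overline{F}$ to the defining triangles gives $\overline{F}X\in\mathcal{F}(\overline{F}\mathcal{S})$.
\end{enumerate}

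\textbf{Part (2).} Let $\mathcal{M}$ be a simple-minded system in $A$-$\stmod$, and set $\mathcal{S}=\overline{F}^{-1}(\mathcal{M})$. This family is $\overline{\varphi}$-stable because $\overline{F}\overline{\varphi}=\overline{F}$.
\begin{enumerate}[(i)]
\item Bricks: for $S\in\mathcal{S}$, the one-dimensionality of $\StHom_{A}(\overline{F}S,\overline{F}S)$ forces $\StHom_{C}(S,S)=k$ and $\StHom_{C}(S,\overline{\varphi}S)=0$. In particular $S\not\cong\overline{\varphi}S$.
\item Orthogonality: for distinct $S,T\in\mathcal{S}$ with $\overline{F}S\neq\overline{F}T$, orthogonality in $A$ kills both summands. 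If instead $\overline{F}S=\overline{F}T$, then $T\cong\overline{\varphi}S$ and $\StHom_{C}(S,T)=0$ by (i).
\item Generation: I would avoid lifting triangles. Instead, I would show that $\mathcal{S}$ is a maximal orthogonal system that meets both Euclidean components, and then invoke Theorem \ref{sms-2Domes-BGA}. For meeting the components: by Theorem \ref{simple-module-and-n-tube}, $\mathcal{M}$ cannot lie only in tubes, since a simple-minded system must generate the Euclidean component. So $\mathcal{M}$ meets the Euclidean component of $A$, and its preimage is $\overline{\varphi}$-stable, so it meets both Euclidean components of $C$. For maximality: if $X$ is a stable brick orthogonal to $\mathcal{S}$, the Hom decomposition shows $\overline{F}X$ is orthogonal to $\mathcal{M}$. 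The weak generating property of $\mathcal{M}$ then gives a contradiction unless $X\in\mathcal{S}$. That weak generating property needs $\mathcal{M}$ to be a simple-minded system, which is exactly our hypothesis.
\end{enumerate}
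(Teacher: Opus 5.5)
Parts (1) and (2)(i)--(ii) of your proposal follow the paper. You use the same two-term decomposition $\StHom_A(\overline{F}X,\overline{F}Y)\cong\StHom_C(X,Y)\oplus\StHom_C(X,\overline{\varphi}Y)$, the same use of $\overline{\varphi}$-stability, and the same push-down argument for generation in (1). One simplification in (1)(i): once you note that $\overline{\varphi}$ swaps the two Euclidean components and the two $\mathbb{P}_1(k)$-families of $C$, you are done. No tube of $C$ is then mapped to itself, so no indecomposable is $\overline{\varphi}$-fixed. The detour through homogeneous tubes ``paired with themselves'' and a ``direct check'' of exceptional tubes is unnecessary and does not follow from the swap.

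For generation in (2) you take a different route from the paper, which just asserts $\mathcal{F}(\overline{F}^{-1}(\mathcal{M}))=C$-$\stmod$ from exactness and density of $F$. Your maximality argument via weak generation is correct. The gap is the step ``$\mathcal{M}$ meets the Euclidean component of $A$''. Theorem \ref{simple-module-and-n-tube} only excludes homogeneous tubes and bounds how many objects lie in each quasi-tube. It says nothing about whether objects of the exceptional quasi-tubes $P$, $Q$ could generate the Euclidean component, and ``an sms must generate the Euclidean component'' is exactly what needs proof. This is not automatic in $A$-$\stmod$: the Cartan matrix of $A$ is nonsingular, so $K_0(A\text{-}\stmod)$ is finite and no $\mathbb{Z}$-valued additive invariant can separate tubes from $\Gamma$.

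A repair: test weak generation on an object $Y$ of a homogeneous tube. Some $M\in\mathcal{M}$ has $\StHom_A(M,Y)\neq 0$. By Theorem \ref{simple-module-and-n-tube}, $M$ does not lie in a tube of rank one. If $M$ lay in a quasi-tube of rank at least two, then $\StHom_A(M,Y)\cong\bigoplus_i\StHom_{\widehat{B}}(\overline{\varphi}^i\widetilde{M},\widetilde{Y})$ would vanish, for the following reasons.
\begin{itemize}
\item $\overline{\varphi}$ commutes with $\tau$, so it preserves ranks of tubes. Hence each $\overline{\varphi}^i\widetilde{M}$ lies in a shift $\mathcal{T}_\lambda[j]$ of an exceptional tube of $\widehat{B}$-$\stmod\simeq D^b(H)$.
\item $\widetilde{Y}$ lies in a homogeneous $\mathcal{T}_\mu$, and $\lambda\neq\mu$ because the ranks differ.
\item $\Hom_H$ and $\mathrm{Ext}^1_H$ vanish between different tubes, so there are no nonzero morphisms from $\mathcal{T}_\lambda[j]$ to $\mathcal{T}_\mu$.
\end{itemize}
Hence $M$ lies in the Euclidean component. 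With this inserted, your argument through Theorem \ref{sms-2Domes-BGA} goes through. It then has the advantage of never lifting triangles from $A$ to $C$, which the paper's one-line justification leaves unexplained.
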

\begin{proof}
By Theorem  \ref{Euclidean-exceptional} and \ref{symmetric-algebras-of-Euclidean-type-1}, $A$ is isomorphic to  $\widehat{B}/<\varphi>$, where $B$ is an exceptional Euclidean algebra of type $\widetilde{A}_{m}$ and $\varphi$ is a square root of the Nakayama automorphism $\nu_{\widehat{B}}$ of  $\widehat{B}.$
Let $C$ be the trivial extension $T(B)$ of $B$. By Lemma \ref{covering-1-2-domestic}, there is a dense covering functor $\overline{F}$ from $C$-$\stmod$ to $A$-$\stmod$.
It is known that  the stable AR-quiver $_{s}\Gamma_{A}$ contains a stable Euclidean component of the form $\mathbb{Z}A_{p,q}$ 
and the stable AR-quiver $_{s}\Gamma_{C}$ contains two stable Euclidean components of the form $\mathbb{Z}A_{p,q}$. 
By Theorem \ref{two-domestic-Brauer-graph-algebras},  $n=(p+q)/2$ is the number of non-isomorphic simple $A$-modules and $2n=p+q$ is the number of non-isomorphic simple $C$-modules.

(1) Let $\mathcal{S}$ be a $\overline{\varphi}$-stable simple-minded system in $C$-$\stmod$. By Theorem \ref{sms-2Domes-BGA}, $\mathcal{S}$ contains at least one object for each Euclidean component and the cardinality of $\mathcal{S}$ is $2n$, the number of non-isomorphic simple $C$-modules. Without loss of generality, we assume that  $\mathcal{S}=\{S_{1},S_{2},\cdots, S_{2n}\}$.
Since $\mathcal{S}$  is $\overline{\varphi}$-stable, $\overline{\varphi}(\mathcal{S})\subseteq\mathcal{S}.$
 By the action of $\overline{F}$, $\overline{F}(S_{i})=\overline{F}(\overline{\varphi}(S_{i}))$ for each $i$.  By equation (\ref{AR-quiv-3}),  $\overline{\varphi}$ sends a non-periodic module to a non-periodic module. Therefore 
$\overline{F}(\mathcal{S})$ consists of  $n$ objects in $A$-$\stmod$ and 
contains at least an object for the Euclidean component of the stable  AR-quiver  $_{s}\Gamma_{A}$.
By covering theory,
there are  isomorphism as follows.
\begin{equation}
\begin{aligned}\label{covering}
\StHom_{A}(\overline{F}(S_{i}),\overline{F}(S_{j}))&\cong\bigoplus_{\ell\in\mathbb{Z}}\StHom_{\widehat{B}}(\overline{\varphi}^{\ell}(S_{i}),S_{j})\\
&=(\bigoplus_{\ell\in\mathbb{Z}}\StHom_{\widehat{B}}(\nu_{\widehat{B}}^{\ell}(S_{i}),S_{j}))\bigoplus(\bigoplus_{\ell\in\mathbb{Z}}\StHom_{\widehat{B}}(\overline{\varphi}\nu_{\widehat{B}}^{\ell}(S_{i}),S_{j})),
\end{aligned}
\end{equation}

\begin{equation}
\begin{aligned}\label{covering-0}
\StHom_{C}(S_{i},S_{j})&\cong\bigoplus_{\ell\in\mathbb{Z}}\StHom_{\widehat{B}}(\nu_{\widehat{B}}^{\ell}(S_{i}),S_{j})
\end{aligned}
\end{equation}
and 
\begin{equation}
\begin{aligned}\label{covering-1}
\StHom_{C}(\overline{\varphi}^{-1}(S_{i}),S_{j})&\cong\bigoplus_{\ell\in\mathbb{Z}}\StHom_{\widehat{B}}(\nu_{\widehat{B}}^{\ell}(\overline{\varphi}^{-1}S_{i}),S_{j})\\
&=\bigoplus_{\ell\in\mathbb{Z}}\StHom_{\widehat{B}}(\overline{\varphi}\nu_{\widehat{B}}^{\ell}(S_{i}),S_{j}).
\end{aligned}
\end{equation}
 
If  $\overline{F}(S_{i})\ncong\overline{F}(S_{j})$, then $i\neq j$, $\overline{\varphi}^{-1}(S_{i})\neq S_{j}$ and  $\overline{\varphi}(S_{i})\neq S_{j}$ in $C$-$\stmod$.  		
Since $\mathcal{S}$ is $\overline{\varphi}$-stable, both $\overline{\varphi}(S_{i})$ and $\overline{\varphi}^{-1}(S_{i})$ are  in  $\mathcal{S}$. It follows   from the orthogonality of  $\mathcal{S}$ and equations (\ref{covering-0}) and (\ref{covering-1}) that  \[\StHom_{A}(\overline{F}(S_{i}),\overline{F}(S_{j}))\cong(\bigoplus_{\ell\in\mathbb{Z}}\StHom_{\widehat{B}}(\nu_{\widehat{B}}^{\ell}(S_{i}),S_{j}))\bigoplus(\bigoplus_{\ell\in\mathbb{Z}}\StHom_{\widehat{B}}(\overline{\varphi}\nu_{\widehat{B}}^{\ell}(S_{i}),S_{j}))=0.\]
 
 If  $\overline{F}(S_{i})\cong\overline{F}(S_{j})$, then $i=j$ or $\overline{\varphi}(S_{i})=S_{j}$ or $\overline{\varphi}(S_{j})=S_{i}$.  

{\bf Case one:} $i=j$. Then $\overline{\varphi}(S_{i})\neq S_{i}$. Thus 
\begin{equation}
\begin{aligned}\label{covering-2}
\StHom_{A}(\overline{F}(S_{i}),\overline{F}(S_{i}))\cong&(\bigoplus_{\ell\in\mathbb{Z}}\StHom_{\widehat{B}}(\nu_{\widehat{B}}^{\ell}(S_{i}),S_{i}))\bigoplus(\bigoplus_{\ell\in\mathbb{Z}}\StHom_{\widehat{B}}(\overline{\varphi}\nu_{\widehat{B}}^{\ell}(S_{i}),S_{i}))\\
=&\StHom_{C}(S_{i},S_{i})\bigoplus\StHom_{C}(\overline{\varphi}^{-1}(S_{i}),S_{i})\\
=&\StHom_{C}(S_{i},S_{i})\cong k.
\end{aligned}
\end{equation}

{\bf Case two:} $\overline{\varphi}^{-1}(S_{i})=S_{j}$. Then  $i\neq j$. Thus 
\begin{equation}
\begin{aligned}\label{covering-3}
\StHom_{A}(\overline{F}(S_{i}),\overline{F}(S_{j})\cong&(\bigoplus_{\ell\in\mathbb{Z}}\StHom_{\widehat{B}}(\nu_{\widehat{B}}^{\ell}(S_{i}),S_{j}))\bigoplus(\bigoplus_{\ell\in\mathbb{Z}}\StHom_{\widehat{B}}(\overline{\varphi}\nu_{\widehat{B}}^{\ell}(S_{i}),S_{j}))\\
=&\StHom_{C}(S_{i},S_{j})\bigoplus\StHom_{C}(\overline{\varphi}^{-1}(S_{i}),S_{j})\\
\cong&\StHom_{C}(S_{j},S_{j})\cong k.
\end{aligned}
\end{equation}
The case $\overline{\varphi}(S_{j})=S_{i}$ is similar to the Case two.  Thus $\overline{F}(\mathcal{S})$ is an orthogonal system in $A$-$\stmod$.
Since the covering functor $F$ is exact and the extension closure  $\mathcal{F}(\mathcal{S})=C$-$\stmod$, it is routine to check that  the extension closure of $\mathcal{F}(\overline{F}(\mathcal{S}))$ is the stable module category $A$-$\stmod$.  Note that push down functor is an exact functor, please refer to \cite[Section 3.2]{BG} or \cite[Chapter II, Section 2.1]{LLXC} for more details. Thus $\overline{F}(\mathcal{S})$ is a simple-minded system in $A$-$\stmod$.

\noindent(2) Let $\mathcal{M}$ be a simple-minded system in $A$-$\stmod$. Without loss of generality, we assume that $\mathcal{M}=\{M_{1},M_{2}, \cdots,M_{s}\}$.   Then we may assume that \[\overline{F}^{-1}(\mathcal{M})=\{U_{1},V_{1},U_{2},V_{2}, \cdots, U_{s},V_{s}\}\] satisfying  $\overline{F}(U_{i})=\overline{F}(V_{i})=M_{i}$ for $i=1,2,\cdots,s$.  Thus $\overline{\varphi}(U_{i})=V_{i}$ or $\overline{\varphi}(V_{i})=U_{i}$ for  $i=1,2,\cdots,s$. Without loss of generality, we may  assume that $\overline{\varphi}(U_{i})=V_{i}$  for  $i=1,2,\cdots,s$. It follows that $\overline{F}^{-1}(\mathcal{M})$ is $\overline{\varphi}$-stable.  

By covering theory, 
\begin{equation}
\begin{aligned}\label{covering-4}
\StHom_{A}(M_{i},M_{j})\cong&\StHom_{C}(U_{i},U_{j})\bigoplus\StHom_{C}(V_{i},U_{j})\\
\cong&\StHom_{C}(V_{i},U_{j})\bigoplus\StHom_{C}(V_{i},V_{j}).
\end{aligned}
\end{equation}
Since $\mathcal{M}$ is an orthogonal system in $A$-$\stmod$, 
 $\StHom_{C}(U_{i},U_{i})\cong\StHom_{C}(V_{i},V_{i})\cong k$ for $i=1,2,\cdots,s$, and $\StHom_{C}(U_{i},V_{j})\cong\StHom_{C}(V_{j},U_{i})\cong 0$ for  $i\neq j$. Thus $\overline{F}^{-1}(\mathcal{M})$ is an orthogonal system in $C$-$\stmod$. Since $F$ is exact and dense, the extension closure of $\overline{F}^{-1}(\mathcal{M})$ is the stable module category  $C$-$\stmod$. Thus $\overline{F}^{-1}(\mathcal{M})$ is a $\overline{\varphi}$-stable simple-minded system in $C$-$\stmod$.
\end{proof}

\begin{Cor}\label{sms-BGA}
Let $A$ be a 1-domestic Brauer graph algebra and $\mathcal{S}$ a maximal orthogonal system which contains at least one object for the Euclidean component. Then $\mathcal{S}$ is a simple-minded system in $A$-$\stmod$ and the cardinality of $\mathcal{S}$ is the number of non-projective non-isomorphic simple $A$-modules.
\end{Cor}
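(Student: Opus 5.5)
The plan is to pull $\mathcal{S}$ back along the dense covering functor $\overline{F}\colon C\text{-}\stmod\to A\text{-}\stmod$ of Lemma \ref{covering-1-2-domestic}, apply Theorem \ref{sms-2Domes-BGA} upstairs, and push the result down with Theorem \ref{sms-1-2-domestic}(1).

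Set $\widetilde{\mathcal{S}}:=\overline{F}^{-1}(\mathcal{S})$, the family of all objects $X$ in $C$-$\stmod$ with $\overline{F}(X)\in\mathcal{S}$. Since the fibres of $\overline{F}$ are the $\overline{\varphi}$-orbits, $\widetilde{\mathcal{S}}$ is $\overline{\varphi}$-stable. Each fibre over a non-$\tau$-periodic object has exactly two elements. This follows because $\overline{\varphi}^2$ is induced by $\nu_{\widehat B}$, which is trivial on $C$-$\stmod$, while $\overline{\varphi}$ swaps the two Euclidean components of ${}_s\Gamma_C$ by (\ref{AR-quiv-3}).

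\textbf{Step 1: $\widetilde{\mathcal{S}}$ is an orthogonal system.} I reverse the computation (\ref{covering-4}). For $M,N\in\mathcal{S}$ with lifts $U,\overline{\varphi}(U)$ and $U',\overline{\varphi}(U')$, covering theory gives
\[\StHom_A(M,N)\cong\StHom_C(U,U')\oplus\StHom_C(\overline{\varphi}(U),U').\]
If $M\ne N$ both summands vanish, and applying the equivalence $\overline{\varphi}$ gives vanishing for all pairs of lifts. If $M=N$, then $k$ splits as $\StHom_C(U,U)\oplus\StHom_C(\overline{\varphi}U,U)$. The identity already lies in the first summand, so $\StHom_C(U,U)\cong k$ and $\StHom_C(\overline{\varphi}U,U)=0$.

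One subtlety is whether $\overline{\varphi}(U)\cong U$ can happen for lifts lying in tubes. If it did, the fibre would be a single object, and the formula would force $\StHom_A(M,M)$ to have dimension at least $2$ on the periodic part. That would contradict $M$ being a brick, and I would spell this out.

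\textbf{Step 2: maximality upstairs.} Since $\mathcal{S}$ meets the Euclidean component of ${}_s\Gamma_A$, and $\overline{F}^{-1}$ of that component is the union of the two Euclidean components of ${}_s\Gamma_C$, the family $\widetilde{\mathcal{S}}$ meets both of them.

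This is the main obstacle: showing $\widetilde{\mathcal{S}}$ is a \emph{maximal} orthogonal system in $C$-$\stmod$. Suppose $X$ is a stable brick with $X\notin\widetilde{\mathcal{S}}$ and $X$ orthogonal to $\widetilde{\mathcal{S}}$ in both directions. By $\overline{\varphi}$-stability of $\widetilde{\mathcal{S}}$, the object $\overline{\varphi}(X)$ is also orthogonal to $\widetilde{\mathcal{S}}$. Then covering formulas give $\StHom_A(\overline{F}X,M)=0=\StHom_A(M,\overline{F}X)$ for all $M\in\mathcal{S}$.

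It remains to show $\overline{F}X$ is a brick, which requires $\StHom_C(\overline{\varphi}X,X)=0$. If this fails, I would first try to replace $X$ by a suitable brick in the $\overline{\varphi}$-orbit setting. Failing that, I would use the explicit description of stable bricks and of $\overline{\varphi}$ on ${}_s\Gamma_{\widehat B}$ promised in Section 4. The aim there is to show that any brick orthogonal to $\widetilde{\mathcal{S}}$ can be chosen with $\StHom_C(\overline{\varphi}X,X)=0$, for instance by choosing a quasi-simple-like object in the same tube or component. Maximality of $\mathcal{S}$ then gives the contradiction.

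\textbf{Step 3: conclusion.} By Theorem \ref{sms-2Domes-BGA}, $\widetilde{\mathcal{S}}$ is a $\overline{\varphi}$-stable simple-minded system in $C$-$\stmod$. By Theorem \ref{sms-1-2-domestic}(1), $\overline{F}(\widetilde{\mathcal{S}})=\mathcal{S}$ is a simple-minded system in $A$-$\stmod$.

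For the cardinality, a simple-minded system of $C$ has $2n$ elements, the number of non-projective simples of $C$ by the cited results. The fibres of $\overline{F}$ on $\widetilde{\mathcal{S}}$ have size two, so $|\mathcal{S}|=n=(p+q)/2$. By Theorem \ref{two-domestic-Brauer-graph-algebras}, this is the number of non-projective non-isomorphic simple $A$-modules.
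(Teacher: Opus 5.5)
Your route is the paper's: pull $\mathcal{S}$ back along $\overline{F}$, apply Theorem \ref{sms-2Domes-BGA} in $C$-$\stmod$, push down with Theorem \ref{sms-1-2-domestic}(1), and count fibres. Steps 1 and 3 are fine. The worry that $\overline{\varphi}(U)\cong U$ is moot, since by (\ref{AR-quiv-3}) $\overline{\varphi}$ interchanges the two families of components of ${}_s\Gamma_C$.

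The genuine gap is Step 2. You never prove that $\widetilde{\mathcal{S}}$ is maximal; you only list things you might try. The case you isolate, a $C$-brick $X$ in an exceptional tube with $\StHom_C(\overline{\varphi}X,X)\neq 0$, is exactly the one that must be excluded. The paper does not settle this either. It obtains maximality of $\overline{F}^{-1}(\mathcal{S})$ ``by Theorem \ref{sms-1-2-domestic}'', but part (2) of that theorem assumes $\mathcal{M}$ is already a simple-minded system and only yields orthogonality. So an actual argument is required.

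The missing idea is that no replacement of $X$ is needed: the Euclidean member $E\in\mathcal{S}$ already forces $M:=\overline{F}(X)$ to be a stable brick. Since $X$ and $\overline{\varphi}(X)$ are orthogonal to $\widetilde{\mathcal{S}}$, $M$ is orthogonal to $E$ in both directions. There are three cases.

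\begin{enumerate}
\item If $X$ is Euclidean, $M$ is a brick by Lemma \ref{stable-Euclidean}.
\item If $\overline{\varphi}$ reverses orientation, then $\overline{\varphi}({}_sP_i)={}_sQ_{i+1}$, so $X$ and $\overline{\varphi}(X)$ lie in tubes of different types. There are no nonzero stable maps between these, so $\StHom_C(\overline{\varphi}X,X)=0$.
\item If $\overline{\varphi}$ preserves orientation and $M=Y(s)$ lies in ${}_sP$ (rank $p$, with $\Omega=\tau^{(p+1)/2}$), use (\ref{qua-Euc-6}) and (\ref{qua-Euc-7}). The sets $\LS_A(M)\cap{}_s\Gamma$ and $\RS_A(M)\cap{}_s\Gamma$ consist of the Euclidean objects whose middle coordinate lies in one of two arcs of $s+1$ consecutive residues modulo $p$, offset by $(p+1)/2$. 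These arcs cover $\mathbb{Z}/p$ as soon as $s\ge (p-1)/2$, so $M\perp E$ forces $s\le (p-3)/2$. In that range
$\StHom_C(\overline{\varphi}X,X)\cong\StHom_{\widehat B}(\overline{\varphi}^{-1}X,X)\cong D\StHom_{\widehat B}(X,\Omega_{\widehat B}\overline{\varphi}X)=D\StHom_{\widehat B}(X,\tau^{(p+1)/2}X)$,
which vanishes in the standard tube. Hence $M$ is a brick. The tube ${}_sQ$ is handled identically, using the last coordinate modulo $q$.
\end{enumerate}

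Homogeneous tubes never meet ${}^{\bot}E^{\bot}$, because every Euclidean object maps nonzero into them (their lifts have dimension vectors that are multiples of $\delta$).

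Since $X\notin\widetilde{\mathcal{S}}$, we have $M\notin\mathcal{S}$, and this contradicts the maximality of $\mathcal{S}$. That completes Step 2, and it is where the Euclidean hypothesis is genuinely used beyond making Theorem \ref{sms-2Domes-BGA} applicable upstairs.
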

\begin{proof}
Let $\overline{F}$ be the covering functor from $C$-$\stmod$ to $A$-$\stmod$ in Lemma \ref{covering-1-2-domestic}.  By  Theorem  \ref{sms-1-2-domestic}, $\overline{F}^{-1}(\mathcal{S})$  is a maximal orthogonal system in $C$-$\stmod$ which contains at least one object for each Euclidean component. By Theorem \ref{sms-2Domes-BGA}, $\overline{F}^{-1}(\mathcal{S})$ is a simple-minded system in $C$-$\stmod$ and the cardinality of $\overline{F}^{-1}(\mathcal{S})$ is $2n$. By (1) of Theorem \ref{sms-1-2-domestic}, $\mathcal{S}$ is  a simple-minded system in $A$-$\stmod$ and the cardinality of $\overline{F}^{-1}(\mathcal{S})$ is $n$, the number of  non-projective  non-isomorphic simple $A$-modules. 
\end{proof}

Note that one domestic Brauer graph algebra is 1-domestic or 2 domestic. By  Corollary \ref{sms-BGA} and Theorem \ref{sms-2Domes-BGA}, the following conclusion holds. 
\begin{Cor}\label{sms-BGA-1}
Let $A$ be a domestic Brauer graph algebra and $\mathcal{S}$  a family of objects in $A$-$\stmod$.
Then $\mathcal{S}$ is a simple-minded system in $A$-$\stmod$ if and only if $\mathcal{S}$ is a maximal orthogonal system which contains at least one object for each Euclidean component.
\end{Cor}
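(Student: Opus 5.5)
The statement splits naturally into the two implications, and the plan is to reduce each to results already available: Theorem \ref{sms-2Domes-BGA} for the 2-domestic algebra $C$, and Corollary \ref{sms-BGA} for the 1-domestic algebras.

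\emph{Sufficiency.} Suppose $\mathcal{S}$ is a maximal orthogonal system containing at least one object from each Euclidean component. There are two cases.
If $A$ is 2-domestic, the conclusion is exactly Theorem \ref{sms-2Domes-BGA}.
If $A$ is 1-domestic, the conclusion is exactly Corollary \ref{sms-BGA}, using that $_{s}\Gamma_{A}$ has a single Euclidean component.
By Theorem \ref{domestic-Brauer-graph-algebra}(3), every domestic Brauer graph algebra is 1-domestic or 2-domestic, so these two cases are exhaustive.

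\emph{Necessity.} Let $\mathcal{S}$ be a simple-minded system. Its orthogonality is part of the definition.

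\textbf{Maximality.} Suppose $X$ is a stable brick not in $\mathcal{S}$ and orthogonal to every member of $\mathcal{S}$. Since $\mathcal{F}(\mathcal{S})=A$-$\stmod$, $X$ is built from objects of $\mathcal{S}$ by finitely many triangles. Choose a filtration of $X$ by triangles with factors in $\mathcal{S}$. Its first factor $S\in\mathcal{S}$ yields a nonzero map $S\to X$. This uses the standard fact that a nonzero object in $\mathcal{F}(\mathcal{S})$ admits a nonzero morphism from some member of $\mathcal{S}$, so every simple-minded system is weakly simple-minded. The nonzero map $S\to X$ contradicts $\StHom(S,X)=0$, so $\mathcal{S}$ is maximal.

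\textbf{Meeting every Euclidean component.} In the 2-domestic case I would rely on the converse direction already established in \cite{Z1}, which the introduction says gives an if-and-only-if characterization. In the 1-domestic case I would pass to $C$ via Theorem \ref{sms-1-2-domestic}(2): the system $\overline{F}^{-1}(\mathcal{S})$ is a simple-minded system in $C$-$\stmod$. By the 2-domestic characterization it meets both Euclidean components of $_{s}\Gamma_{C}$. Since $\overline{F}$ maps these components onto the Euclidean component of $_{s}\Gamma_{A}$, $\mathcal{S}$ meets that component.

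The main obstacle is the 2-domestic necessity of meeting every Euclidean component. If it is not directly quotable from \cite{Z1}, I would argue as follows.
\begin{enumerate}[$(1)$]
\item By Theorem \ref{simple-module-and-n-tube}, each tube of rank $r$ contains fewer than $r$ members of $\mathcal{S}$, and homogeneous tubes contain none.
\item The tubes of the family $\mathcal{C}_{i}$ have ranks $p$ and $q$, and the count of non-projective simples from Theorem \ref{two-domestic-Brauer-graph-algebras} then forces too few objects in $\mathcal{S}$ if a Euclidean component were avoided.
\item The required count should follow because $\mathcal{S}$ must generate the Grothendieck group of the stable category, whose rank equals the number of simples.
\end{enumerate}
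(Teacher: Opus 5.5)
Your sufficiency argument is the paper's entire proof. The paper notes that a domestic Brauer graph algebra is 1- or 2-domestic and cites Corollary \ref{sms-BGA} and Theorem \ref{sms-2Domes-BGA}. Those two results give only the ``if'' direction. The paper leaves the ``only if'' direction implicit: for 2-domestic algebras it is the converse from \cite{Z1} quoted in the introduction, and for 1-domestic algebras it is not argued at all. Your proposal supplies this converse, and the added steps are sound given the paper's results. Maximality follows from weak generation. In the 1-domestic case, the Euclidean-component condition follows in three steps: pull back along $\overline{F}$ via Theorem \ref{sms-1-2-domestic}(2), apply the \cite{Z1} converse in $C$-$\stmod$, and push forward. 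The last step works because $\overline{F}$ sends both Euclidean components of $_{s}\Gamma_{C}$ onto the unique one of $_{s}\Gamma_{A}$. So your route is a more complete version of the paper's, not a different one.

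Two corrections. First, in the maximality step, for an arbitrary filtration the composite from the first factor to $X$ can be zero, since trivial steps may occur. Argue instead as follows. Suppose $\StHom(S,X)=0$ for all $S\in\mathcal{S}$. Apply $\StHom(-,X)$ to the triangles $X_{i-1}\to X_{i}\to S_{i}\to X_{i-1}[1]$ with $X_{0}=0$ and $X_{n}=X$. Each restriction $\StHom(X_{i},X)\to\StHom(X_{i-1},X)$ is then injective, so $\StHom(X,X)$ embeds into $\StHom(0,X)=0$ and $X=0$.

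Second, drop the fallback (1)--(3). The Grothendieck group of $A$-$\stmod$ is the cokernel of the Cartan matrix, not a group of rank equal to the number of simples. In the 1-domestic case, where the Cartan matrix is non-singular, it is even finite. Moreover, the tube bounds of Theorem \ref{simple-module-and-n-tube} do not by themselves force an object in each Euclidean component. Your primary route through \cite{Z1} is the one the paper itself relies on, so the fallback is unnecessary.
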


\begin{Cor}
Let $A$ be a domestic Brauer graph algebra  and $\mathcal{S}$ a weakly simple-minded system with a finite cardinality. Then $\mathcal{S}$ is a simple-minded system in $A$-$\stmod$.
\end{Cor}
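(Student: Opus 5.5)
The statement is the last corollary: a weakly simple-minded system $\mathcal{S}$ of finite cardinality over a domestic Brauer graph algebra $A$ is a simple-minded system. By Corollary \ref{sms-BGA-1}, it suffices to show two things: that $\mathcal{S}$ is a maximal orthogonal system, and that it meets every Euclidean component of $_{s}\Gamma_{A}$. Orthogonality is part of the definition of a weakly simple-minded system, so the work lies in maximality and in the Euclidean condition.

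\emph{Maximality.} Suppose $X$ is a stable brick with $\StHom_{A}(S,X)=0=\StHom_{A}(X,S)$ for every $S\in\mathcal{S}$. Since $X$ is a stable brick, it is nonzero in $A$-$\stmod$. The weakly generating condition then yields some $S\in\mathcal{S}$ with $\StHom_{A}(S,X)\neq0$, which is a contradiction. Hence no stable brick can be adjoined to $\mathcal{S}$ while keeping orthogonality, so $\mathcal{S}$ is a maximal orthogonal system. This step uses only one of the two orthogonality conditions on $X$.

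\emph{Euclidean components.} This is the main obstacle. Fix a Euclidean component $\mathcal{Y}$ of $_{s}\Gamma_{A}$ and suppose $\mathcal{S}\cap\mathcal{Y}=\emptyset$. I would aim to find a nonzero object $X\in\mathcal{Y}$ with $\StHom_{A}(S,X)=0$ for all $S\in\mathcal{S}$, which contradicts the weakly generating condition. The plan is to lift to the repetitive algebra via the covering isomorphisms (\ref{covering-iso-1}). There $_{s}\Gamma_{\widehat{B}}=\bigvee_{n}({_{s}\mathcal{Y}_{n}}\vee{_{s}\mathcal{C}_{n}})$, and the stable Hom spaces are directed in the sense of \cite{ANS}: a nonzero map from $\mathcal{C}_{n}$ or $\mathcal{Y}_{n}$ can land only in components with index $n$ or $n+1$, and within a $\mathbb{Z}\widetilde{A}_{m}$ component the Hom-support of a fixed object is bounded to the left and right along $\tau$-orbits. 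Since $\mathcal{S}$ is finite, it has finitely many lifts modulo the group action. Their Hom-supports meet a lift $\widetilde{\mathcal{Y}}$ of $\mathcal{Y}$ only in finitely many bounded regions. Choosing $X=\tau^{N}Y$ for $|N|$ large, with $Y\in\widetilde{\mathcal{Y}}$, should then give $\StHom(\overline{\varphi}^{i}S,X)=0$ for all $S$ and $i$. Pushing down, this gives $\StHom_{A}(S,F_{\lambda}X)=0$ for all $S\in\mathcal{S}$.

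The delicate point is the contributions from objects of $\mathcal{S}$ lying in the tubes $\mathcal{C}_{n}$ adjacent to $\widetilde{\mathcal{Y}}$. Maps from a tube into the following Euclidean component may be nonzero for infinitely many $\tau$-shifts. To handle this, I would first try to use the fact that, in the stable category, an object of a tube family $\mathcal{C}_{n}$ maps nonzero only to the preinjective-like half of $\mathcal{Y}_{n+1}$. In that case one chooses $N$ with the sign that moves $X$ towards the preprojective-like side. If this fails, a fallback is to replace $\tau^{N}$ by $\Omega^{N}$, which is also an autoequivalence permuting components in $\widehat{B}$-$\stmod$, and to exploit that the finitely many $S$ lie in finitely many components up to $\overline{\varphi}$.

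Once both conditions are established, Corollary \ref{sms-BGA-1} gives that $\mathcal{S}$ is a simple-minded system in $A$-$\stmod$.
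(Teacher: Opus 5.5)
Your reduction to Corollary \ref{sms-BGA-1} and your maximality argument are correct. The paper states this corollary without a separate proof, so that reduction is clearly the intended route. The gap is in the Euclidean step, and the witness you look for there need not exist.

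First, tube objects are not confined to one side of the next Euclidean component. By (\ref{qua-Euc-3}) and (\ref{qua-Euc-7}), a brick $Y(s)$ in an exceptional tube has $\RS_{A}(Y(s))\cap{}_{s}\Gamma$ equal to $s+1$ entire infinite sectional lines (the index $\ell$ is free). Since $\tau(0,i,j)=(0,i-1,j-1)$, $\tau^{N}X$ lies on such a line for all $N$ in a fixed residue class modulo the rank. So taking $|N|$ large never escapes these supports. In terms of $\widehat{B}$-$\stmod\simeq\mathcal{D}^{b}(H)$: for regular $R$, $\Hom(R,P[1])\cong\mathsf{Ext}^{1}_{H}(R,P)\cong\D\Hom_{H}(P,\tau R)$ is nonzero for infinitely many preprojectives $P$.

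Second, and worse, let $R$ be the mouth of a homogeneous tube; in the $2$-domestic case this is a stable brick. Then $\Hom_{H}(R,I)\neq0$ and $\mathsf{Ext}^{1}_{H}(R,P)\neq0$ for every preinjective $I$ and every preprojective $P$, because these have nonzero defect. So if $\mathcal{S}$ contains such an $R$, every object of the following Euclidean component receives a nonzero map from $\mathcal{S}$, and no $X$ there can serve as a witness.

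The $\Omega^{N}$ fallback does not help. Because $\Omega$ commutes with $\overline{\varphi}$, replacing $X$ by $\Omega^{N}X$ merely replaces $\mathcal{S}$ by $\Omega^{-N}\mathcal{S}$, which is a configuration of the same kind.

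The missing idea is to put the witness in a homogeneous tube; this is exactly where finiteness is used.

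\begin{enumerate}[(1)]
\item Normalize indices so that $\mathcal{Y}_{n}=\mathcal{I}[n-1]\cup\mathcal{P}[n]$ and $\mathcal{C}_{n}=\mathcal{R}[n]$ in $\mathcal{D}^{b}(H)$, where $\mathcal{P},\mathcal{R},\mathcal{I}$ denote the preprojective, regular and preinjective $H$-modules.
\item For a homogeneous $R$ in $\mathcal{C}_{n}$, nonzero maps into $R$ come only from $\mathcal{Y}_{n}$ (via $\Hom(P,R)$ and $\mathsf{Ext}^{1}(I,R)$) and from the same tube in $\mathcal{C}_{n}$ or $\mathcal{C}_{n-1}$. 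There are none from $\mathcal{Y}_{n\pm1}$ or from any other tube.
\item Each tube family is a $\mathbb{P}_{1}(k)$-family with $k$ infinite, and $\mathcal{S}$ is finite. Hence there is a homogeneous tube $T$ such that neither $T$ nor $\Omega T$ contains an object of $\mathcal{S}$.
\item Let $X$ be the mouth of $T$. By the covering isomorphisms, $\StHom(S,X)\neq0$ forces $S$ to lie in the Euclidean component that is the image of $\mathcal{Y}_{n}$. Weak generation at $X$ therefore forces $\mathcal{S}$ to meet that component.
\end{enumerate}

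In the $1$-domestic case there is only one Euclidean component, so this is the whole condition. In the $2$-domestic case $C=T(B)$, $\nu$ shifts indices by $2$, so the two Euclidean components feed the two different tube families $\mathcal{C}_{0}$ and $\mathcal{C}_{1}$. Running the argument in each family gives an object of $\mathcal{S}$ in each Euclidean component, and Corollary \ref{sms-BGA-1} then finishes the proof.
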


\section{The action of $\overline{\varphi}$ in the stable AR-quiver $_{s}\Gamma_{\widehat{B}}$}
Let $A$ be a 1-domestic Brauer graph algebra. Stated as in 	Section 2, $A$ is of the form  $\widehat{B}/G$, where $B$ be an exceptional Euclidean algebra of type $\widetilde{A}_{m}$ and $G$ is an infinite cyclic group generated by a square root $\varphi$ of Nakayama automorphism $\nu_{\widehat{B}}$.
By equation (\ref{AR-quiver-1}), the stable AR-quiver $_{s}\Gamma_{\widehat{B}}$ is of the form  
\begin{equation}\label{repet-AR-qui}
\begin{aligned}
\bigvee_{i\in\mathbb{Z}}(_{s}\Gamma_{i}\vee{_{s}\mathcal{C}_{i}})=\bigvee_{i\in\mathbb{Z}}(_{s}\Gamma_{i}\vee {_{s}P_{i}}\vee {_{s}Q_{i}}\vee{_{s}\mathcal{H}_{i}}),
\end{aligned}
\end{equation}
where $_{s}\Gamma_{i}$ is the stable Euclidean component of the form $\mathbb{Z}\widetilde{A}_{p,q}$; the family $_{s}\mathcal{C}_{i}= {_{s}P_{i}}\vee {_{s}Q_{i}}\vee{_{s}\mathcal{H}_{i}}$  is stable generalized standard for each $i$, where $P_{i}$ is a quasi-tube of rank $p$,  $Q_{i}$ is a quasi-tube of rank $q$ and  $\mathcal{H}_{i}$ is a family of homogeneous tubes.  In this section,  we shall determine the action of $\overline{\varphi}$ in the stable AR-quiver $_{s}\Gamma_{\widehat{B}}$.  

Let $B$ be an Euclidean algebra. Equivalently, $B$ is a tilted algebra of Euclidean type, thus $\mathcal{D}^{b}(B)$ is triangulated equivalent to the derived category $\mathcal{D}^{b}(H)$ of a hereditary algebra $H$ of Euclidean type. Since the global dimension of $B$ is finite (less than or equal to two), $\mathcal{D}^{b}(B)$  and  $\widehat{B}$-$\stmod$ are triangulated equivalent. Thus $\widehat{B}$-$\stmod$ is triangulated equivalent to the derived category $\mathcal{D}^{b}(H)$. Therefore it is not hard to know that every connected component of the stable AR-quiver of $\widehat{B}$ is stable generalized standard.  

\begin{Prop}$($\cite[Proposition 2.14]{Z1}$)$\label{repetitive-stable generalized}
Let $B$ be an Euclidean algebra and $\widehat{B}$ the repetitive algebra of $B$. Then every connected component of the AR-quiver $\Gamma_{\widehat{B}}$ of $\widehat{B}$ is stable generalized standard. 
\end{Prop}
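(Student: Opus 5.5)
We have to prove Proposition \ref{repetitive-stable generalized}: every connected component of $\Gamma_{\widehat{B}}$ is stable generalized standard, i.e. $\underline{\rad}^{\infty}(X,Y)=0$ for all $X,Y$ in one component. The plan is to transport the problem along the triangle equivalence
\[\widehat{B}\text{-}\stmod\simeq\mathcal{D}^{b}(B)\simeq\mathcal{D}^{b}(H),\]
where $H$ is hereditary of Euclidean type. This uses Happel's theorem, which applies because $\operatorname{gl.dim}B\le 2$, together with the fact that $B$ is tilted from $H$. The point is that a triangle equivalence between Krull--Schmidt categories with AR triangles preserves the stable radical filtration. It sends irreducible maps to irreducible maps and AR triangles to AR triangles. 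Consequently $\underline{\rad}^{t}(X,Y)$ corresponds to $\rad^{t}_{\mathcal{D}^{b}(H)}(X',Y')$ for every $t$, and hence $\underline{\rad}^{\infty}$ corresponds to $\rad^{\infty}$. Components of $_{s}\Gamma_{\widehat{B}}$ then correspond to components of the AR-quiver of $\mathcal{D}^{b}(H)$. Projective-injective vertices do not matter here: maps through them vanish in the stable category, and removing them does not change the components beyond passing to $_{s}\mathcal{C}$. So it suffices to show that $\rad^{\infty}_{\mathcal{D}^{b}(H)}(X,Y)=0$ whenever $X,Y$ lie in the same component of $\Gamma_{\mathcal{D}^{b}(H)}$.

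Next I would recall the known shape of $\Gamma_{\mathcal{D}^{b}(H)}$. Its components are the connecting components $\mathbb{Z}\Delta$, one for each shift, each glueing $\mathcal{I}_{H}[i-1]$ to $\mathcal{P}_{H}[i]$, together with the regular tubes $\mathcal{T}[i]$. Any indecomposable in a fixed component lies in $\mathcal{C}[i]$ for a single $i$, where $\mathcal{C}$ is either $\mathcal{P}\cup\mathcal{I}[-1]$ or a tube. Nonzero maps between objects of the same component are therefore either module maps in $H$-$\m$, or morphisms in $\Hom(I,P[1])=\operatorname{Ext}^{1}_{H}(I,P)$ from a preinjective $I$ to a shifted preprojective $P[1]$.

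The proof then splits into two cases. \emph{Tubes.} For $X,Y$ in a single tube of $H$-$\m$, the tube is standard, because it is a regular tube over a tame hereditary algebra. Hence $\rad^{\infty}(X,Y)=0$; equivalently, every map between objects in a tube is a finite sum of composites of irreducible maps, by the standard $\Hom$-computations in uniserial abelian categories. \emph{Connecting component $\mathbb{Z}\Delta$.} Here I would use that $\mathbb{Z}\Delta$ is a preprojective-type (directing, standard) component. The full subcategory $k(\mathbb{Z}\Delta)$ is then equivalent to the mesh category: the component is the AR-quiver of $\mathcal{D}^{b}(k\Delta)$ restricted to one connected part and carries no oriented cycles. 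It follows that every morphism between its objects is a finite sum of paths, so $\rad^{\infty}=0$. Concretely, for $X,Y$ in this component there is a finite bound on the lengths of sectional paths from $X$ to $Y$, which gives $\Hom(X,Y)\subseteq\rad^{<N}$. I would invoke the classical fact that preprojective and preinjective components of a hereditary algebra are standard, together with Happel's description of the connecting component in $\mathcal{D}^{b}(H)$ as $\mathbb{Z}\Delta$ with mesh relations.

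I expect the main obstacle to be justifying that the triangle equivalence transports the stable radical filtration. This is needed in particular because $\rad^{t}$ in $\widehat{B}$-$\m$, taken modulo maps factoring through projectives, must agree with the radical filtration computed intrinsically in the triangulated category $\widehat{B}$-$\stmod$. To settle this I would show that a map in $\widehat{B}$-$\m$ between non-projective indecomposables lies in $\rad^{t}+\mathcal{P}$ if and only if its stable class lies in $\underline{\rad}^{t}$. The argument is by induction on $t$, using that AR sequences in $\widehat{B}$-$\m$ map to AR triangles in $\widehat{B}$-$\stmod$, and that the only extra irreducible maps involve projective-injective modules $P$, which appear solely through $\rad P\to P\to P/\operatorname{soc}P$ and vanish stably.
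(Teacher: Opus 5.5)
Your route is the paper's: it records only the triangle equivalences $\widehat{B}$-$\stmod\simeq\mathcal{D}^{b}(B)\simeq\mathcal{D}^{b}(H)$ and calls the rest ``not hard'', deferring to the earlier paper. The transport step you flag as the main obstacle is in fact routine and needs no induction over AR sequences. The radical is intrinsic to any additive category. Moreover, $\mathcal{P}(X,Y)\subseteq\rad(X,Y)$ whenever $X,Y$ have no projective summands, so $\underline{\rad}^{t}=(\rad^{t}+\mathcal{P})/\mathcal{P}$ directly. Finite-dimensionality of Hom spaces then lets you pass to $t=\infty$.

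The step that does not follow as written is the last one, inside $\mathcal{D}^{b}(H)$. Three of your inferences are not valid as stated:
\begin{enumerate}[$(1)$]
\item ``the tube is standard, hence $\rad^{\infty}(X,Y)=0$'';
\item ``no oriented cycles, so every morphism is a sum of paths'';
\item ``a bound on lengths of sectional paths gives $\Hom(X,Y)\subseteq\rad^{<N}$''.
\end{enumerate}
Standardness or acyclicity only describes the full subcategory on $\mathcal{C}$. But $\rad^{n}(X,Y)$ is computed in all of $\mathcal{D}^{b}(H)$, so a priori it contains composites through objects outside $\mathcal{C}$. Also, standardness of $\mathcal{P}_H$ and $\mathcal{I}_H$ separately says nothing about the cross term $\Hom(I[-1],P)=\operatorname{Ext}^{1}_{H}(I,P)$. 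Standardness of the whole $\mathbb{Z}\Delta$ component in $\mathcal{D}^{b}(H)$ is asserted rather than justified.

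The missing ingredient is that nonzero maps between indecomposables of $\mathcal{D}^{b}(H)$ only move weakly forward along $\cdots\prec\mathcal{P}[i]\prec\mathcal{R}[i]\prec\mathcal{I}[i]\prec\mathcal{P}[i+1]\prec\cdots$. This holds because $\Hom(M[i],N[j])=0$ for $j\notin\{i,i+1\}$ and $\Hom(\mathcal{R},\mathcal{P})=\Hom(\mathcal{I},\mathcal{R})=\Hom(\mathcal{I},\mathcal{P})=0$. Together with the fact that distinct tubes are orthogonal, every chain of nonzero maps from $X$ to $Y$ in one component stays inside that component.

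With this in hand, both cases close:
\begin{enumerate}[$(a)$]
\item \emph{Tubes.} $\rad^{n}_{\mathcal{D}^{b}(H)}(X,Y)=\rad^{n}_{H}(X,Y)$, and regular tubes of a tame hereditary algebra are generalized standard. Alternatively, use standardness plus homogeneity of the mesh relations and $\dim_k\Hom(X,Y)<\infty$.
\item \emph{The component $\mathcal{P}[i]\cup\mathcal{I}[i-1]$.} Apply the autoequivalence $\tau^{-m}[-i]$ with $m\gg0$ to move $X,Y$ into $\mathcal{P}_H$. By the forward ordering, $\rad^{n}$ there equals $\rad^{n}_{H}$, and $\rad^{\infty}_{H}(-,Y')=0$ for $Y'$ preprojective, since $Y'$ has only finitely many predecessors. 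This also removes any need for standardness of the connecting component in $\mathcal{D}^{b}(H)$.
\end{enumerate}
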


We use the same notations as in \cite[Section 1]{Z1} to label objects in different components of $_{s}\Gamma_{\widehat{B}}$. 
Recall  that, for every Euclidean component $_{s}\Gamma_{i}$ of  $_{s}\Gamma_{\widehat{B}}$, we use  notation $(i,j,k)$ to denote a non-projective object in $_{s}\Gamma_{i}$, where $i$ distinguishes different Euclidean components of $_{s}\Gamma_{\widehat{B}}$, $j$ and $k$ are  determined by $\mathbb{Z}\times\Delta_{0}.$ Note that the AR-translation $\tau$  in  $_{s}\Gamma_{i}$ satisfies $\tau(i,j,k)=(i,j-1,k-1).$ 
We  assume that  \[\Omega^{-1}_{\widehat{B}}(i,j,k)=(i+1,j,k)\ \text{and}\ \Omega_{\widehat{B}}(i,j,k)=(i-1,j,k).\] 
Without loss of generality, we always assume that $\Omega_{\widehat{B}}^{-1}$ preserves the orientation of irreducible maps in $_{s}\Gamma_{i}$ for each $i$, that is, if there is an irreducible map $\alpha\colon(i,j,k)\rightarrow(i,j+1,k)$ (or $\beta\colon (i,j,k)\rightarrow(i,j,k+1)),$  then
$\Omega_{\widehat{B}}^{-1}(\alpha)$ is an irreducible map from  $(i+1,j,k)$ to $(i+1,j+1,k)$ (or $\Omega_{\widehat{B}}^{-1}(\beta)$ is an irreducible map from $(i+1,j,k)$ to $(i+1,j,k+1)$).

For a quasi-tube $P_{i}$ (resp. $Q_{i}$) of  $_{s}\Gamma_{\widehat{B}}$, we use the notation $P(i,j,k)$ (resp. $Q(i,j,k))$ to denote a non-projective object on $P_{i}$ (resp. $Q_{i}$), where $i$ distinguishes different quasi-tubes of $_{s}\Gamma_{\widehat{B}}$, $j$ and $k$ are  determined by $\mathbb{Z} A_{\infty}/<\tau^{r}>$ for a positive integer $r=p$ or $q$.
Note that the AR-translation $\tau$ in the  component  $P_{i}$ or $Q_{i}$ satisfies \[\tau P(i,j,k)=P(i,j-1,k)\ \text{or}\ \tau Q(i,j,k)=Q(i,j-1,k).\] We may still use a pair $(i,j)$ of integers to label an object in a connected component of the stable AR-quiver if there is no confusion.

\begin{Prop}\label{action-of-varphi}
Let $B$ be an exceptional  Euclidean algebra of type $\widetilde{A}_{m}$,  $\varphi$ a square root of Nakayama automorphism $\nu_{\widehat{B}}$ and let $\mathcal{C}$ be an Euclidean component of the form $\mathbb{Z}\widetilde{A}_{p,q}$ of  the AR-quiver $\Gamma_{\widehat{B}}$.  Take a  non-projective object $X=(0,a,b)$ in $\mathcal{C}$. Then 
\begin{enumerate}[$(1)$]
\item 	$\overline{\varphi}(X)=(1,i,j)$, where $a\leq i<a+p$ and $b-q< j<b$. Moreover, if $\overline{\varphi}$ preserves the orientation of irreducible maps in $\mathcal{C}$, then $p$ and $q$ are odd numbers and $i=a+\dfrac{p-1}{2}$ and $j=b-\dfrac{q+1}{2}$. 	
\item 	$\overline{\varphi}^{-1}(X)=(-1,r,s)$, where $a<r<a+p$ and $b-q<s\leq b$.
Moreover, if  $\overline{\varphi}^{-1}$ preserves the orientation of irreducible maps in $\mathcal{C}$, then $p$ and $q$ are odd numbers and $r=a+\dfrac{p+1}{2}$ and $s=b-\dfrac{q-1}{2}$. 	
\end{enumerate}
\end{Prop}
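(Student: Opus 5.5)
We would describe $\overline{\varphi}$ on $\mathcal{C}$ as $\Omega^{-1}_{\widehat{B}}$ followed by an automorphism of the translation quiver $\mathbb{Z}\widetilde{A}_{p,q}$, and then solve $\psi^{2}=\tau$ for that automorphism $\psi$.

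\emph{Setup.} By (\ref{AR-quiv-3}), $\overline{\varphi}$ maps ${_{s}\Gamma_{0}}$ onto ${_{s}\Gamma_{1}}$. Since $\overline{\varphi}$ is a stable autoequivalence, it commutes with $\tau$ and, by (\ref{comm-diagram}), with $\Omega_{\widehat{B}}$. Hence
\[\psi:=\Omega_{\widehat{B}}\overline{\varphi}\]
is an automorphism of the stable translation quiver ${_{s}\Gamma_{0}}\cong\mathbb{Z}\widetilde{A}_{p,q}$ commuting with $\tau$. Next we compute $\overline{\nu}_{\widehat{B}}$ on the labels. For the (locally) self-injective algebra $\widehat{B}$ we have $\tau\cong\Omega^{2}_{\widehat{B}}\overline{\nu}_{\widehat{B}}$, so $\overline{\nu}_{\widehat{B}}=\tau\Omega^{-2}_{\widehat{B}}$. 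With the conventions above this gives $\overline{\nu}_{\widehat{B}}(0,a,b)=(2,a-1,b-1)$. Using $\overline{\varphi}^{2}=\overline{\nu}_{\widehat{B}}$ and the commutation with $\Omega_{\widehat{B}}$, we get $\Omega^{-2}_{\widehat{B}}\psi^{2}=\tau\Omega^{-2}_{\widehat{B}}$, i.e.
\[\psi^{2}=\tau \quad\text{on } {_{s}\Gamma_{0}}.\]

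\emph{Coordinates.} We make the coordinates on ${_{s}\Gamma_{0}}$ explicit. The vertices $(0,j,k)$ have arrows $(j,k)\to(j+1,k)$ and $(j,k)\to(j,k+1)$, and $\tau$ is $(j,k)\mapsto(j-1,k-1)$. The cylinder $\mathbb{Z}\widetilde{A}_{p,q}$ arises by identifying $(j,k)\sim(j+p,k-q)$, since going once around the cycle takes $p$ steps along one orientation and $q$ against. So every vertex has a unique representative $(i,j)$ with $a\le i<a+p$, and a unique one with $b-q<j\le b$; the two bounds in the statement are such normal forms. First, with the lattice description and a counting argument, we would show that for any such automorphism $\psi$ the image $\psi(a,b)$ has a representative in the strip stated in (1). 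Then (2) follows by applying the same argument to $\overline{\varphi}^{-1}=\overline{\nu}_{\widehat{B}}^{-1}\overline{\varphi}$.

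\emph{Orientation-preserving case.} Here $\psi$ commutes with both arrow directions, so it is a translation $\psi(j,k)=(j+s,k+t)$ defined modulo the lattice $\mathbb{Z}(p,-q)$. The condition $\psi^{2}=\tau$ becomes
\[(2s,2t)\equiv(-1,-1)\pmod{(p,-q)},\]
i.e. $2s=-1+np$ and $2t=-1-nq$ for some integer $n$. Then $np$ and $nq$ are odd, so $n$, $p$ and $q$ are all odd. Normalizing $s$ into $[0,p)$ forces $n=1$, hence $s=\frac{p-1}{2}$ and $t=-\frac{q+1}{2}$, which is exactly (1). For $\overline{\varphi}^{-1}=\overline{\nu}^{-1}_{\widehat{B}}\overline{\varphi}$ one computes $\Omega_{\widehat{B}}^{-1}\overline{\varphi}^{-1}=\psi^{-1}$. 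On representatives this is $(a,b)\mapsto(a-\frac{p-1}{2},\,b+\frac{q+1}{2})$, which is equivalent modulo $(p,-q)$ to $(a+\frac{p+1}{2},\,b-\frac{q-1}{2})$, as claimed in (2).

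\emph{Orientation-reversing case (main obstacle).} The hard step is the general inequality when $\psi$ reverses the two arrow types. This can only happen when the two orientations are interchangeable, which forces $p=q$. In that case $\psi(j,k)=(k+s,j+t)$, and $\psi^{2}=\tau$ gives $s+t\equiv-1$ modulo the lattice. We would list these possibilities and check that the image still lands in the half-open strip in (1). The strictness of the inequalities (for instance $j\neq b$) comes from the fact that $\psi(X)\neq X$: otherwise $\tau X=X$, which is impossible in $\mathbb{Z}\widetilde{A}_{p,q}$. I expect the careful bookkeeping of representatives and of the endpoint cases to be the main technical work.
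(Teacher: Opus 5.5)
Your strategy is essentially the paper's. The paper also combines $\overline{\varphi}^{2}=\nu_{\widehat{B}}=\tau\Omega_{\widehat{B}}^{-2}$ with $\overline{\varphi}\Omega_{\widehat{B}}=\Omega_{\widehat{B}}\overline{\varphi}$ to get $\overline{\varphi}(1,i,j)=(2,a-1,b-1)$ and $\overline{\varphi}(1,a,b)=(2,i,j)$, which is your $\psi^{2}=\tau$. It then splits according to whether $\overline{\varphi}$ preserves relative positions (your translation case) or reflects them. Your lattice bookkeeping is tidier, because the paper equates coordinates of particular representatives rather than working modulo $(p,-q)$. Your treatment of the ``Moreover'' parts is correct: $2s+1=np$ and $2t+1=-nq$ force $p,q$ odd, normalization forces $n=1$, and $\Omega_{\widehat{B}}^{-1}\overline{\varphi}^{-1}=\psi^{-1}$ gives the formula in (2).

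The step that fails is the strictness of the bounds. In (1) both conditions are imposed on the \emph{same} representative, so the $j$-bound is a genuine assertion and not a normal form. Also, $\psi(X)\neq X$ only excludes $(i,j)=(a,b)$, not $j=b-q$ or $j=b$ with $i\neq a$. In fact your own computation refutes the strict lower bound.
\begin{itemize}
\item In the reversing case ($p=q$), the lattice vector $(p,-p)$ has coordinate sum $0$. So every representative of $\psi(a,b)=(b+s,a+t)$, with $s+t=-1$, satisfies $i+j=a+b-1$. Normalizing $a\le i<a+p$ then gives $j\in\{b-p,\dots,b-1\}$. Since $s$ is fixed while $b-a$ runs through all residues modulo $p$ as $X$ varies, $j=b-q$ is attained for some $X$. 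This case is forced, for instance, for a Brauer tree with an even number $n$ of edges, where $p=q=n$ is even.
\item In the preserving case, $j=b-\frac{q+1}{2}$ equals $b-q$ when $q=1$.
\item Dually, in (2) one gets $r=a+p$ for suitable $X$ in the reversing case, and whenever $p=1$.
\end{itemize}
So your method proves $a\le i<a+p$ and $b-q\le j<b$ in (1), and $a<r\le a+p$ in (2), but not the strict versions as stated, because those are false in general. The paper does not establish them either: it assumes the bounds at the outset and calls the reversing case ``routine''.
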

\begin{proof}
We prove only conclusion (1) since $(2)$ is dual to (1).

\noindent (1) Take a non-projective object $X=(0,a,b)$. We assume that $\overline{\varphi}(X)=(1,i,j)$, where $a\leq i<a+p$ and $b<j< b-q$. 
By the action of $\Omega_{\widehat{B}}^{-1}$ and $\nu_{\widehat{B}}$ in the  Euclidean component of $\Gamma_{\widehat{B}}$, we know that \[\Omega_{\widehat{B}}^{-2}(X)=(2,a,b)\ \ \text{and}\ \  \nu_{\widehat{B}}(X)=\overline{\varphi}^{2}(0,a,b)=\overline{\varphi}(1,i,j).\] Since $\nu_{\widehat{B}}(X)=\tau\Omega_{\widehat{B}}^{-2}(0,a,b)=\tau(2,a,b)=(2,a-1,b-1)$, we have 
\begin{equation}\label{equation-1}
\overline{\varphi}(1,i,j)=(2,a-1,b-1).
\end{equation}
It is clear that $\Omega_{\widehat{B}}^{-1}\overline{\varphi}(X)=(2,i,j)$ and $\overline{\varphi}\Omega_{\widehat{B}}^{-1}(X)=\overline{\varphi}(1,a,b).$
Since $\Omega_{\widehat{B}}^{-1}\overline{\varphi}=\overline{\varphi}\Omega_{\widehat{B}}^{-1}$, 
we have 
\begin{equation}\label{equation-2}
\overline{\varphi}(1,a,b)=(2,i,j).
\end{equation}

There are two cases to be considered. 

\noindent {\bf Case one}: $\overline{\varphi}$  preserves the orientation of irreducible maps in $\mathcal{C}$. 

Since both $\overline{\varphi}$ and $\Omega_{\widehat{B}}^{-1}$ preserve the orientation of irreducible maps in $\mathcal{C}$,  
the relative position from object $(1,i,j)$ to object $(1,a,b)$ in the  connected component of stable AR-quiver is the same with from object $\overline{\varphi}(1,i,j)$ to object $\overline{\varphi}(1,a,b)$. 
See the following diagram.
\[\xymatrix@R=35pt@C=35pt@!0{
& &\scriptstyle \overline{\varphi}(X)=(1,i,j)\ar@{->}[ddrr] \ar@{-->}[dd] \\
& & & &\ \ \ \ \ \ \ \ \ \ \ar@{->}[rr]^{\ \ \ \ \overline{\varphi}}&& \\
\scriptstyle\cdots \ar@{->}[ddrr] \ar@{->}[uurr]&&&&\scriptstyle\cdots\\
 &  &    \\
 & &\scriptstyle\Omega_{\widehat{B}}^{-1}(X)=(1,a+p,b-q) \ar@{->}[uurr] &   }
\xymatrix@R=35pt@C=35pt@!0{
& &\scriptstyle \overline{\varphi}^{2}(X)=\overline{\varphi}(1,i,j)=(2,a-1,b-1)\ar@{->}[ddrr]\ar@{-->}[dd] & \\
& & \\
\scriptstyle\cdots \ar@{->}[ddrr] \ar@{->}[uurr] &&&&\scriptstyle\cdots\\
& &  \\
 &&\scriptstyle\overline{\varphi}\Omega_{\widehat{B}}^{-1}(X)=\overline{\varphi}(1,a,b) \ar@{->}[uurr] & & }
\]
It follows  that  $\overline{\varphi}(1,a,b)=(2,2a-1-i+p,2b-1-j-q)$.
Therefore, by equations (\ref{equation-1}) and  (\ref{equation-2}),
\[(2,i,j)=(2,2a-1-i+p,2b-1-j-q).\]

Thus \[2i=2a-1+p \ \text{and} \  2j=2b-1-q.\]
Since both $i$ and $j$ are integers, $p$ and $q$ are odd numbers and $i=a+\dfrac{p-1}{2}$ and $j=b-\dfrac{q+1}{2}$. Note that the action of $\overline{\varphi}$ in the component is totally determined by  integers $p$ and $q$.

\noindent {\bf Case two}: $\overline{\varphi}$  inverses  the orientation of irreducible maps in $\mathcal{C}$.
 
By equation (\ref{equation-1}), 
\[\overline{\varphi}\overline{\varphi}(X)=\nu_{\widehat{B}}(0,a,b)=(2,a-1,b-1).\]
Apply the action of $\overline{\varphi}$ to objects $(1,a,b)$ and  $(1,i,j)$ and see the following diagram.
\[\xymatrix@R=35pt@C=35pt@!0{
& & \scriptstyle\Omega_{\widehat{B}}^{-1}(X)=(1,a,b)\ar@{->}[ddrr]  & & \ \ \ \ \ \ \ \ &&  \\
&&&&\ar@{->}[rr]^{\ \ \ \ \overline{\varphi}}&&\\
\scriptstyle\cdots \ar@{->}[ddrr] \ar@{->}[uurr] &  &   &&\scriptstyle\cdots \\
& & & && \\
&&\scriptstyle\overline{\varphi}(X)=(1,i,j) \ar@{->}[uurr]\ar@{-->}[uu] &&& }
\xymatrix@R=35pt@C=35pt@!0{
& & \scriptstyle\overline{\varphi}^{2}(X)=\nu_{\widehat{B}}(0,a,b)=(2,a-1,b-1)\ar@{->}[ddrr] \ar@{-->}[dd]  & & \ \ \ \ \ \ \ \ &&  \\
&&&&\\
\scriptstyle\cdots \ar@{->}[ddrr] \ar@{->}[uurr] &  &   &&\scriptstyle\cdots \\
& &  && \\
&&\scriptstyle\overline{\varphi}\Omega_{\widehat{B}}^{-1}(X)=\overline{\varphi}(1,a,b)  \ar@{->}[uurr] &&& }
\]
Since $\overline{\varphi}$ inverses the irreducible maps in $\mathcal{C}$,  \[\overline{\varphi}\Omega_{\widehat{B}}^{-1}(X)=\overline{\varphi}(1,a,b)=(2,a-1+b-j,b-1+a-i).\] 
By equations (\ref{equation-1})  and (\ref{equation-2}), 
\[(2,i,j)=(2,a-1+b-j,b-1+a-i).\]
Thus $j=b-a+1-i$. Note  $\overline{\varphi}(0,a,b)$ is not a fixed position in this case. 

If $i<a$ and $j<b$ or $i\geq a$ and $j\geq b$, then it is routine to check  that  $\overline{\varphi}\Omega_{\widehat{B}}\neq\Omega_{\widehat{B}}\overline{\varphi}$ for the above two cases. It is a contradiction. 
Thus conclusion (1) holds. 
\end{proof}

The following diagram indicates the position  of object $\overline{\varphi}(0,a,b)$ in the  Euclidean component $_{s}\Gamma{i}$.

\begin{small}\[\xymatrix@dr@R=14pt@C=14pt@!0{
&&\scriptstyle \old{X=(0,a,b)}\ar[rr] &&\scriptstyle\cdots \ar[rr]& &\scriptstyle (0,a+\ell-1,b)\scriptstyle \ar[rr] && \scriptstyle (0,a+\ell,b)\ar[rr]&& \scriptstyle\cdots\ar[rr]&& \scriptstyle(0,a+p,b)&&\\	
&& \\	
&&\scriptstyle\cdots\ar[uu] &&\scriptstyle&& 	\scriptstyle &&\scriptstyle&& &&\scriptstyle\cdots\ar[uu]&&\\	
&&& \\	
&&	\scriptstyle (0,a,b-j-1)\ar[uu] &&& & \scriptstyle &&\scriptstyle && \scriptstyle &&\scriptstyle(0,a+p,b-j-1)\ar[uu]\\	
&&	& \\
&&\scriptstyle \scriptstyle(0,a,b-j)\ar[uu]&& \scriptstyle &&  && &&\scriptstyle&&\scriptstyle(0,a+p,b-j)\ar[uu]\\
&&& \\
&&\scriptstyle\cdots \ar[uu] && \scriptstyle && \scriptstyle  &&\scriptstyle&&\scriptstyle &&\scriptstyle\cdots\ar[uu]\\ 
&&\\
&&\scriptstyle(0,a,b-q) \ar[uu] \ar[rr]&&\scriptstyle\cdots\ar[rr]&&\scriptstyle(0,a+\ell-1,b-q)\ar[rr]&&\scriptstyle(0,a+\ell,b-q)\ar[rr] &&\scriptstyle\cdots\ar[rr]&&\scriptstyle(0,a+p,b-q) \ar[uu]
}
\xymatrix@dr@R=14pt@C=14pt@!0{
&&\scriptstyle\old{\Omega^{-1}_{\widehat{B}}(X)=(1,a,b)}\ar[rr] &&\scriptstyle\cdots \ar[rr]& &\scriptstyle (1,i-1,b)\scriptstyle \ar[rr] && \scriptstyle(1,i,b)\ar[rr]&& \scriptstyle\cdots\ar[rr]&& \scriptstyle(1,a+p,b)&&\\	
&& \\	
&&\scriptstyle\cdots\ar[uu] &&\scriptstyle&& 	\scriptstyle &&\scriptstyle\ar[uu]&& &&\scriptstyle\cdots\ar[uu]&&\\	
&&& \\	
&&	\scriptstyle (1,a,j+1)\ar[uu] && && &&\scriptstyle \scriptstyle\ar[uu] && \scriptstyle &&\scriptstyle(1,a+p,j+1)\ar[uu]\\	
&&	& \\
&&\scriptstyle(1,a,j)\ar[rr] \ar[uu]&& \scriptstyle\ar[rr] &&\scriptstyle\ar[rr]  && \scriptstyle\old{\overline{\varphi}(X)=(1,i,j)}\ar[rr]\ar[uu] &&\scriptstyle\cdots\ar[rr]&&\scriptstyle(1,a+p,j)\ar[uu]\\
&&& \\
&&\scriptstyle\cdots \ar[uu] && \scriptstyle && \scriptstyle&&\scriptstyle \ar[uu] &&\scriptstyle &&\scriptstyle\cdots\ar[uu]\\ 
&&\\
&&\scriptstyle(1,a,b-q) \ar[uu] \ar[rr]&&\scriptstyle\cdots\ar[rr]&&\scriptstyle(1,i-1,b-q)\ar[rr]&&\scriptstyle(1,i,b-q)\ar[uu]\ar[rr] &&\scriptstyle\cdots\ar[rr]&&\scriptstyle(1,a+p,b-q) \ar[uu]
		}\]
\end{small}
	
\begin{small}
\[\xymatrix@dr@R=15pt@C=15pt@!0{
\scriptstyle(2,a-1,b)\ar[rr]	&&\scriptstyle\ \ \ \ \ \ \old{\Omega^{-2}_{\widehat{B}}(X)=(2,a,b)}\ar[rr] &&\scriptstyle\cdots \ar[rr]& &\scriptstyle \cdots\ar[rr] && \scriptstyle \scriptstyle(2,i,b)\ar[rr]&& \scriptstyle\cdots\ar[rr]&& \scriptstyle(2,a+p,b)\,.&&\\	
&& \\	
\scriptstyle\old{\overline{\varphi}^{2}(X)=(2,a-1,b-1)}\ \ \ \ \ \ \ \ar[uu]\ar[rr]	&&\scriptstyle(2,a,b-1)\ar[uu] &&\scriptstyle&& 	\scriptstyle &&\scriptstyle\ar[uu]&& &&\scriptstyle\cdots\ar[uu]&&\\	
&&& \\	
&&\scriptstyle\cdots\ar[uu] && && &&\scriptstyle\ar[uu] &&  &&\scriptstyle\cdots\ar[uu]\\	
&&	& \\
&&\scriptstyle (2,a,j)\ar[rr] \ar[uu]&& \scriptstyle\ar[rr] &&\scriptstyle\ar[rr]  && \scriptstyle\old{\Omega^{-1}_{\widehat{B}}\overline{\varphi}(X)=(2,i,j)}\ar[rr]\ar[uu] &&\scriptstyle\cdots\ar[rr]&&\scriptstyle(2,a+p,j)\ar[uu]\\
&&& \\
&&\scriptstyle\cdots \ar[uu] && \scriptstyle && \scriptstyle&&\scriptstyle \ar[uu] &&\scriptstyle &&\scriptstyle\cdots\ar[uu]\\ 
&&\\
&&\scriptstyle(2,a,b-q) \ar[uu] \ar[rr]&&\scriptstyle\cdots\ar[rr]&&\scriptstyle\cdots\ar[rr]&&\scriptstyle(2,i,b-q)\ar[uu]\ar[rr] &&\scriptstyle\cdots\ar[rr]&&\scriptstyle(2,a+p,b-q) \ar[uu]
}\] 
\end{small}
\medskip	

Let $R$ be a finite dimensional algebra and $X$ a finitely generated $R$-module. We introduce some notations as follows.  For a family of objects $\mathcal{X}$ in $R$-$\stmod$, we define 
\begin{equation}
\begin{aligned}
&\RS_{R}(\mathcal{X})\colon=\{Z\in R{\text-}\stmod \mid \StHom_{R}(X,Z)\neq0, \exists  X\in\mathcal{X}\}, \nonumber\\  &\LS_{R}(\mathcal{X})\colon=\{Z\in R{\text-}\stmod\mid \StHom_{R}(Z,X)\neq0, \exists  X\in\mathcal{X}\}, \nonumber\\&{^{\bot}\mathcal{X}^{\bot}}\colon=\{Z\in R{\text-}\stmod \mid \StHom_{R}(X,Z)=0, \StHom_{R}(Z,X)=0,\forall X\in\mathcal{X}\}.\nonumber
\end{aligned}
\end{equation}
$\RS_{R}\mathcal{X}$ is called the {\bf right support of $\mathcal{X}$},  $\LS_{R}\mathcal{X}$  the {\bf left support of $\mathcal{X}$} and ${^{\bot}\mathcal{X}^{\bot}}$  the {\bf stable bi-perpendicular category of $\mathcal{X}$}. It is clear that  ${^{\bot}\mathcal{X}^{\bot}}=R$-$\stmod\backslash(\RS_{R}\mathcal{X}\cup\LS_{R}\mathcal{X})$. We still use the notations $\RS_{R} X$, $\LS_{R} X$ and ${^{\bot}X^{\bot}}$ for an object $X$ in $R$-$\stmod$.

Take a non-projective object $X$ in $R$-$\m$,  we define

\begin{equation}\label{Predecessor-1}
\begin{aligned}
\mathcal{P}_{R}(X):=\LS_{R}(X)\cap\{&M\in R\text{-}\ind\mid \text{there is a non-zero compositions of finitely many irreducible}\\
&\text{ maps from M to X in the AR-quiver $\Gamma_{R}$}\}.
\end{aligned}
\end{equation}
Dually,  
\begin{equation}\label{Sucessor-1}
\begin{aligned}
\mathcal{S}_{R}(X):=\RS_{R}(X)\cap\{&N\in R\text{-}\ind\mid \text{there is a non-zero  compositions of finitely many irreducible}\\
&\text{ maps from X to N in the AR-quiver $\Gamma_{R}$}\}.
\end{aligned}
\end{equation}
Note that  every object on $\mathcal{P}_{R}(X)$ is a predecessor of $X$ and every object on $\mathcal{S}_{R}(X)$ is a successor of $X$ in the AR-quiver $\Gamma_{R}$ of $R$.  

\begin{Cor}\label{varphi-biperpen}
Let $\widehat{B}$, $\overline{\varphi}$ and $X$ be as in the  Proposition \ref{action-of-varphi}. Then $\overline{\varphi}(X)$ \rm{(}resp. $\overline{\varphi}^{-1}(X)$\rm{)} is contained in $^{\bot}{\Omega_{\widehat{B}}^{-1}(X)}^{\bot}$\rm{ (}resp. $^{\bot}{\Omega_{\widehat{B}}(X)}^{\bot}$\rm{)} in $\widehat{B}$-$\stmod$. 
\end{Cor}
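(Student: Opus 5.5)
The plan is to reduce the statement to a computation inside a single stable Euclidean component, and then to compare coordinates. I will treat only $\overline{\varphi}(X)$; the claim for $\overline{\varphi}^{-1}(X)$ and $\Omega_{\widehat{B}}(X)$ is dual, using part (2) of Proposition \ref{action-of-varphi} in place of part (1).

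\textbf{Step 1: reduce to the mesh category of one component.} By Proposition \ref{action-of-varphi}(1), $\overline{\varphi}(X)=(1,i,j)$ with $a\leq i<a+p$ and $b-q<j<b$. Also $\Omega^{-1}_{\widehat{B}}(X)=(1,a,b)$. So both objects lie in the same stable Euclidean component ${_{s}\Gamma_{1}}$ of the form $\mathbb{Z}\widetilde{A}_{p,q}$.

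By Proposition \ref{repetitive-stable generalized} this component is stable generalized standard, so $\underline{\rad}^{\infty}$ vanishes between its objects. Consequently any nonzero stable morphism between $(1,a,b)$ and $(1,i,j)$ is a sum of composites of irreducible maps inside ${_{s}\Gamma_{1}}$. In particular, maps passing through the tubes ${_{s}\mathcal{C}_{i}}$ or through other Euclidean components contribute nothing, since such composites lie in $\underline{\rad}^{\infty}$. Hence
\[\StHom_{\widehat{B}}\big((1,a,b),(1,i,j)\big) \quad\text{and}\quad \StHom_{\widehat{B}}\big((1,i,j),(1,a,b)\big)\]
are computed in the mesh category $k(\mathbb{Z}\widetilde{A}_{p,q})$.

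\textbf{Step 2: describe the hom-support of $(1,a,b)$.} I will compute $\RS(1,a,b)$ and $\LS(1,a,b)$ restricted to ${_{s}\Gamma_{1}}$, that is, the sets $\mathcal{S}_{\widehat{B}}$ and $\mathcal{P}_{\widehat{B}}$ of (\ref{Sucessor-1}) and (\ref{Predecessor-1}). I will do this with the usual additive (hammock) function $h$ on $\mathbb{Z}\widetilde{A}_{p,q}$, which starts with $h=1$ at $(1,a,b)$ and is propagated by the mesh relations $h(Z)+h(\tau Z)=\sum_{E\to Z}h(E)$, truncated at $0$.

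Because the two sectional paths leaving a vertex of $\mathbb{Z}\widetilde{A}_{p,q}$ have lengths governed by $p$ and $q$ and meet again, the nonzero values should be $1$ exactly on the two sectional paths (rays) starting at $(1,a,b)$. In the paper's coordinates these rays are
\[\{(1,x,b)\mid a\le x\le a+p\} \quad\text{and}\quad \{(1,a,y)\mid b-q\le y\le b\},\]
together with the vertices they reach, as in the rectangle drawn after Proposition \ref{action-of-varphi}. After that the function vanishes, since the mesh relations force cancellation once the two paths have met.

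Dually, $\LS(1,a,b)$ inside ${_{s}\Gamma_{1}}$ consists of the two sectional paths ending at $(1,a,b)$, and these lie on the opposite side.

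\textbf{Step 3: compare coordinates.} The inequalities $a\le i<a+p$ and $b-q<j<b$ place $(1,i,j)$ strictly inside the interior of the rectangle, since $j\neq b$. Whenever $i\neq a$, it therefore lies on neither sectional path from $(1,a,b)$ and on neither path into it. The boundary case $i=a$ needs separate care. There I would use equation (\ref{equation-2}), $\overline{\varphi}(1,a,b)=(2,i,j)$, together with the commutation $\overline{\varphi}\Omega_{\widehat{B}}=\Omega_{\widehat{B}}\overline{\varphi}$. If $(1,a,j)$ lay on a sectional path through $(1,a,b)$, this would force $\overline{\varphi}^{2}(X)$ to sit on a sectional path through $\Omega^{-2}_{\widehat{B}}(X)$. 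That conflicts with $\nu_{\widehat{B}}(X)=(2,a-1,b-1)$, which is a mesh-neighbour rather than a point on such a path.

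With both stable Hom spaces vanishing, $\overline{\varphi}(X)\in{^{\bot}\Omega^{-1}_{\widehat{B}}(X)^{\bot}}$.

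\textbf{Main obstacle.} The hard step is Step 2: pinning down the exact hom-hammock in $k(\mathbb{Z}\widetilde{A}_{p,q})$, including the precise endpoints and the orientation conventions for the coordinates $(j,k)$. The paper's diagrams are ambiguous about which corner is $\Omega^{-1}_{\widehat{B}}(X)$. I would first verify the hammock on a small case ($p=q=1$, the Kronecker type), then argue in general by induction along the mesh relations.
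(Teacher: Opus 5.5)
Your route (read off $\overline{\varphi}(X)=(1,i,j)$ from Proposition \ref{action-of-varphi}, then check that it avoids the hom-supports of $\Omega^{-1}_{\widehat{B}}(X)=(1,a,b)$ inside ${}_{s}\Gamma_{1}$) is the one the paper has in mind; the paper states the corollary with no further argument. Two steps fail.

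First, the hammock in Step 2 is wrong: nothing cancels in $k(\mathbb{Z}\widetilde{A}_{p,q})$ after the two rays meet. On the universal cover $\mathbb{Z}A_{\infty}^{\infty}$, the additive function starting at $(x,y)$ equals $1$ on the whole quarter plane $\{x'\geq x,\ y'\geq y\}$ (each interior mesh gives $1+1-1=1$). So the right support of $(1,a,b)$ in ${}_{s}\Gamma_{1}$ is the union of the quarter planes based at the lifts $(1,a+kp,b-kq)$, $k\in\mathbb{Z}$, and the dimensions grow without bound (for $p=q=1$ they are $1,2,3,\dots$ along the component). What is true, and all you need, is the restriction to the closed rectangle $[a,a+p]\times[b-q,b]$. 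There the right support of $(1,a,b)$ is the top row plus the right column, and the left support is the left column plus the bottom row. With this correction the interior points $a<i<a+p$, $b-q<j<b$ are handled.

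Second, and fatally, the boundary case $i=a$ cannot be excluded, because the statement is false there. The left column is exactly the sectional path ending at $(1,a,b)$, so $i=a$ gives $\StHom_{\widehat{B}}(\overline{\varphi}(X),\Omega^{-1}_{\widehat{B}}(X))\neq 0$.

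Your contradiction argument does not work. Applying $\overline{\varphi}$ to a sectional path $\overline{\varphi}(X)\rightsquigarrow\Omega^{-1}_{\widehat{B}}(X)$ gives a path from $\overline{\varphi}^{2}(X)=(2,a-1,b-1)$ to $\overline{\varphi}\Omega^{-1}_{\widehat{B}}(X)=(2,a,j)$, not one through $\Omega^{-2}_{\widehat{B}}(X)$. That is fully consistent with (\ref{equation-1}) and (\ref{equation-2}). For instance, in the orientation-preserving case with $p=1$, the paper's own formula puts $\overline{\varphi}(X)=(1,a,b-\tfrac{q+1}{2})$ on that path. For $q=1$ the bound $b-q<j<b$ of Proposition \ref{action-of-varphi} is even empty.

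A clean counterexample: take $B$ the Kronecker algebra, so $p=q=1$ and $A=k[x,y]/(x^{2},y^{2})$ is the Brauer graph algebra of a single loop. Identify ${}_{s}\Gamma_{1}$ with ${}_{s}\Gamma_{0}$ via $\Omega^{-1}_{\widehat{B}}$. Then $\overline{\varphi}$ shifts the vertex chain of $\mathbb{Z}\widetilde{A}_{1,1}$ by some $c$. Since $\overline{\varphi}^{2}=\tau\Omega^{-2}_{\widehat{B}}$ and $\tau$ is the shift by $-2$, we get $c=-1$. So $\overline{\varphi}(X)\to\Omega^{-1}_{\widehat{B}}(X)$ is an irreducible map, hence nonzero in $\widehat{B}$-$\stmod$.

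Conceptually, put $M=\overline{F_{\lambda}}(X)$ and apply the covering formula to $\StHom_{A}(M,\Omega^{-1}_{A}M)$. Only the terms $\ell=0,1$ can contribute, and the $\ell=0$ term is $\Hom_{\mathcal{D}^{b}(H)}(X,X[1])=0$. Hence $\StHom_{\widehat{B}}(\overline{\varphi}(X),\Omega^{-1}_{\widehat{B}}(X))\cong\mathrm{Ext}^{1}_{A}(M,M)$. This is $k^{2}$ for $M=k$ above. It is also nonzero for $M=S_{1}$ in Example \ref{1-domestic-2}, where there is a loop at $1$ and the figure of ${}_{s}\Gamma$ shows an arrow $1\to\Omega^{-1}(1)$.

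What does hold is the one-sided vanishing $\StHom_{\widehat{B}}(\Omega^{-1}_{\widehat{B}}(X),\overline{\varphi}(X))=0$. In the preserving case this is because $(\tfrac{p-1}{2},-\tfrac{q+1}{2})\geq k(p,-q)$ has no integer solution $k$. This is the only direction Lemma \ref{stable-Euclidean} uses. The two-sided claim holds in the orientation-preserving case exactly when $p,q\geq 3$.
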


Let $A$ be a 1-domestic Brauer graph algebra. By Section  1 and 2, $A$ is of the form $\widehat{B}/<\varphi>$ and there is  a canonical Galois covering $F:\widehat{B}\rightarrow A$ induced by $\varphi$. The  functor $\overline{F_{\lambda}}:\widehat{B}$-$\stmod\rightarrow A$-$\stmod$ induced by push down  functor $F_{\lambda}$ is a covering functor. We present the proposition as follows.

\begin{Prop}\label{supp-bipendic}
Let $A$ be a 1-domestic Brauer graph algebra and $X$ an indecomposable non-projective  object in a stable  connected component $\mathcal{C}$  of $\Gamma_{A}$.
\begin{enumerate}[$(1)$]
\item The intersection between left support of $X$ and $\mathcal{C}$ states as follows. 
\begin{equation}
\begin{aligned}
\LS_{A}(X)\cap\mathcal{C} =&\overline{F_{\lambda}}(\LS_{\widehat{B}}(X))\cap\mathcal{C}=\overline{F_{\lambda}}(\RS_{\widehat{B}}(\nu^{-1}_{\widehat{B}}\Omega^{-1}_{\widehat{B}}(X)))\cap\mathcal{C}\\
=&\overline{F_{\lambda}}(\mathcal{P}_{\widehat{B}}(X)\cup\mathcal{S}_{\widehat{B}}(\nu^{-1}_{\widehat{B}}\Omega_{\widehat{B}}^{-1}(X)))=\mathcal{P}_{A}(X)\cup\mathcal{S}_{A}(\Omega^{-1}(X)).
\end{aligned}
\end{equation}
\item The intersection between right support of $X$ and $\mathcal{C}$ states as follows.  
\begin{equation}
\begin{aligned}
\RS_{A}(X)\cap\mathcal{C} &=\overline{F_{\lambda}}(\RS_{\widehat{B}}(X))\cap\mathcal{C}=\overline{F_{\lambda}}(\LS_{\widehat{B}}(\nu_{\widehat{B}}\Omega_{\widehat{B}}(X)))\cap\mathcal{C}\\
&=\overline{F_{\lambda}}(\mathcal{S}_{\widehat{B}}(X)\cup\mathcal{P}_{\widehat{B}}(\nu_{\widehat{B}}\Omega_{\widehat{B}}(X)))=\mathcal{S}_{A}(X)\cup\mathcal{P}_{A}(\Omega(X)).
\end{aligned}
\end{equation}

\item The intersection of bi-perpendicular category of $X$ and $\mathcal{C}$ states as follows.   \[^{\bot}X^{\bot}\cap\mathcal{C}=\mathcal{C}\setminus(\mathcal{P}_{A}(X)\cup\mathcal{S}_{A}(X)\cup\mathcal{S}_{A}(\Omega^{-1}(X))\cup\mathcal{P}_{A}(\Omega(X))).\]
\end{enumerate}
\end{Prop}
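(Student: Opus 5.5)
The plan is to work in $\widehat{B}$-$\stmod$, where the structure is controlled by the derived equivalence with $\mathcal{D}^{b}(H)$ for $H$ hereditary of type $\widetilde{A}_{m}$, and then push the answer down to $A$ using the covering isomorphisms (\ref{covering-iso-1}). Fix a lift $\widetilde{X}$ of $X$ in a component $\widetilde{\mathcal{C}}$ of $_{s}\Gamma_{\widehat{B}}$ with $\overline{F_{\lambda}}(\widetilde{\mathcal{C}})=\mathcal{C}$; we write $X$ for $\widetilde{X}$ as the statement does. By (\ref{covering-iso-1}), for $Z\in\mathcal{C}$ with lift $\widetilde{Z}$ we have $\StHom_{A}(Z,X)\neq0$ iff $\StHom_{\widehat{B}}(\overline{\varphi}^{i}\widetilde{Z},\widetilde{X})\neq0$ for some $i\in\mathbb{Z}$. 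This gives the first equality $\LS_{A}(X)\cap\mathcal{C}=\overline{F_{\lambda}}(\LS_{\widehat{B}}(X))\cap\mathcal{C}$; likewise $\RS_{A}(X)\cap\mathcal{C}=\overline{F_{\lambda}}(\RS_{\widehat{B}}(X))\cap\mathcal{C}$.

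The second equality comes from Serre duality in $\widehat{B}$-$\stmod$. The Serre functor is $\nu_{\widehat{B}}\Omega_{\widehat{B}}=\tau\Omega^{-1}_{\widehat{B}}$ up to the shift convention, so
\[\StHom_{\widehat{B}}(Z,X)\cong D\StHom_{\widehat{B}}(X,\nu_{\widehat{B}}\Omega_{\widehat{B}}Z).\]
Hence $Z\in\LS_{\widehat{B}}(X)$ iff $Z\in\RS_{\widehat{B}}(\nu^{-1}_{\widehat{B}}\Omega^{-1}_{\widehat{B}}X)$, and dually for $\RS$.

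The third equality describes the Hom-support concretely. Every component is stable generalized standard by Proposition \ref{repetitive-stable generalized}, and $\widehat{B}$-$\stmod\simeq\mathcal{D}^{b}(H)$. A nonzero stable map between indecomposables in $\widetilde{\mathcal{C}}$, or from $\widetilde{\mathcal{C}}$ into a component reached after one further shift, is then a finite sum of composites of irreducible maps. The hereditary structure also gives vanishing beyond one shift: $\StHom(M,\Omega^{-i}N)=0$ unless $i\in\{0,1\}$ for $M,N$ in a fixed Euclidean component or tube family. Therefore the nonzero Homs into $X$ from objects lying over $\mathcal{C}$ come from two sources: the predecessors $\mathcal{P}_{\widehat{B}}(X)$ in the same component, and, via the duality above, the successors $\mathcal{S}_{\widehat{B}}(\nu^{-1}_{\widehat{B}}\Omega^{-1}_{\widehat{B}}X)$ inside its component.

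Finally we apply $\overline{F_{\lambda}}$. The push-down preserves irreducible maps and AR-sequences, and the isomorphisms (\ref{covering-iso-1}) show that nonzero composites of irreducible maps push down to nonzero composites. Together with $\overline{F_{\lambda}}\nu_{\widehat{B}}=\overline{F_{\lambda}}$ (as $\nu_{\widehat{B}}=\varphi^{2}\in G$) and $\overline{F_{\lambda}}\Omega_{\widehat{B}}=\Omega_{A}\overline{F_{\lambda}}$, this gives $\mathcal{P}_{A}(X)\cup\mathcal{S}_{A}(\Omega^{-1}X)$ and proves (1). Part (2) is dual. Part (3) follows from (1) and (2), since ${^{\bot}X^{\bot}}$ is the complement of $\RS\cup\LS$.

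The main obstacle is the third equality. One must show that only these two "cones" contribute, with no extra terms from other $\overline{\varphi}$-translates landing in $\mathcal{C}$. In particular, $\overline{\varphi}$ shifts the component index by one, and the components $\overline{\varphi}^{\pm1}$ of the relevant ones also map onto $\mathcal{C}$. I would handle this by checking, with Proposition \ref{action-of-varphi} and Corollary \ref{varphi-biperpen}, that these contributions already lie inside the two cones after push-down; the vanishing for index differences other than $0,1$ comes from the hereditary derived category.
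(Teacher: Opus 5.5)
This is essentially the paper's proof: the covering isomorphisms (\ref{covering-iso-1}) give the first equality, Serre duality with $\nu_{\widehat{B}}\Omega_{\widehat{B}}$ gives the second, stable generalized standardness (Proposition \ref{repetitive-stable generalized}) plus duality gives the third, and push-down gives the fourth; your hereditary vanishing for components more than one shift apart makes explicit what the paper uses tacitly when it restricts to $\varSigma=\Omega^{-1}\mathcal{C}\cup\mathcal{C}\cup\Omega\mathcal{C}$. Two corrections are needed. First, a nonzero map from one component into the next shifted one is never a sum of composites of irreducible maps, since no arrows join distinct components, so drop that sentence in favour of the duality argument you then give. Second, your ``main obstacle'' is not one: the first equality already accounts for all $\overline{\varphi}$-translates, these lie in the $\Omega$-shifted component families that duality and vanishing dispose of, and so Proposition \ref{action-of-varphi} and Corollary \ref{varphi-biperpen} are not needed.
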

\begin{proof}
We prove only conclusion (1), since (2) is a dual of (1) and (3) is a direct consequence of (1) and (2). We still use $X$ to denote a  preimage of $X$ under covering functor $F_{\lambda}$ and we assume that $\varSigma$ is the union  $\Omega^{-1}\mathcal{C}\cup\mathcal{C}\cup\Omega\mathcal{C}$.
	
\noindent (1) The second equation follows directly from Serre duality and  the fourth equation holds by the definition of $\mathcal{P}(A)$ and $\mathcal{S}(A)$ since   $\overline{F_{\lambda}}$ is a covering functor.  It suffices to show the first and the third equations holds.
If $Z\in\LS_{A}(X)\cap\mathcal{C}$, then $Z\in\mathcal{C}$ and  $\StHom_{A}(Z,X)\ncong0$. By covering theory, 
\[\StHom_{A}(Z,X)\cong\bigoplus_{i\in\mathbb{Z}}\StHom_{\widehat{B}}(\overline{\varphi}^{i}(Z),X)\ncong0.\]
Therefore there is an integer $j$ such that $\StHom_{\widehat{B}}(\overline{\varphi}^{j}(Z),X)\ncong0$. Hence  $\overline{\varphi}^{j}(Z)\in\LS_{\widehat{B}}(X)$. Since  $\overline{F_{\lambda}}(\overline{\varphi}^{i}(Z))\cong Z$ in $A$-$\stmod$, $Z\in \overline{F_{\lambda}}(\LS_{\widehat{B}}(X))\cap\mathcal{C}$. 
Conversely, if $Z\in \overline{F_{\lambda}}(\LS_{\widehat{B}}(X))\cap\mathcal{C}$, 
then there is a $W$ in $\widehat{B}$-$\stmod$ such that $Z\cong\overline{F_{\lambda}}(W)$ and $W\in\LS_{\widehat{B}}(X)$. By covering theory, 
\[\StHom_{A}(Z,X)\cong\bigoplus_{i\in\mathbb{Z}}\StHom_{\widehat{B}}(\overline{\varphi}^{i}(Z),X)\cong\bigoplus_{i\in\mathbb{Z}}\StHom_{\widehat{B}}(\overline{\varphi}^{i}(W),X)\ncong0.\]
Therefore $Z\in\LS_{A}(X)\cap\mathcal{C}.$ Thus the first equation holds, that is,  $\LS_{A}(X)\cap\mathcal{C}=\overline{F_{\lambda}}(\LS_{\widehat{B}}(X))\cap\mathcal{C}$. 
	
 By Proposition \ref{repetitive-stable generalized}, every connected component of $\Gamma_{\widehat{B}}$ is stable generalized standard.    By Serre duality, \[\LS_{\widehat{B}}(X)\cap{\varSigma}=\mathcal{P}_{\widehat{B}}(X)\cup\mathcal{S}_{\widehat{B}}(\nu^{-1}_{\widehat{B}}\Omega_{\widehat{B}}^{-1}(X)),\ \ \ \ \RS_{\widehat{B}}(X)\cap{\varSigma}=\mathcal{S}_{\widehat{B}}(X)\cup\mathcal{P}_{\widehat{B}}(\nu_{\widehat{B}}\Omega_{\widehat{B}}(X)).\]
Therefore \[\overline{F_{\lambda}}(\LS_{\widehat{B}}(X))\cap\mathcal{C}=\overline{F_{\lambda}}(\LS_{\widehat{B}}(X)\cap\varSigma)=\overline{F_{\lambda}}(\mathcal{P}_{\widehat{B}}(X)\cup\mathcal{S}_{\widehat{B}}(\nu^{-1}_{\widehat{B}}\Omega_{\widehat{B}}^{-1}(X))),\]
\[\overline{F_{\lambda}}(\RS_{\widehat{B}}(X))\cap\mathcal{C}=\overline{F_{\lambda}}(\RS_{\widehat{B}}(X)\cap\varSigma)=\overline{F_{\lambda}}(\mathcal{S}_{\widehat{B}}(X)\cup\mathcal{R}_{\widehat{B}}(\nu_{\widehat{B}}\Omega_{\widehat{B}}(X))).\]
Thus (1) holds. 
\end{proof}

\begin{Lem}\label{stable-Euclidean}
Let $A$ be a 1-domestic Brauer graph algebra and $\mathcal{C}$ the stable Euclidean component.  Then every object in $\mathcal{C}$ is a stable brick in $A$-$\stmod$.
\end{Lem}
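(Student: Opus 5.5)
Let $X$ be an object of the stable Euclidean component $\mathcal{C}$ of $_{s}\Gamma_{A}$. We lift $X$ along the covering $\overline{F_{\lambda}}\colon\widehat{B}\text{-}\stmod\to A\text{-}\stmod$ and compute $\StHom_{A}(X,X)$ with the covering isomorphism (\ref{covering-iso-1}). Choose a preimage $\widetilde{X}=(0,a,b)$ in the Euclidean component $_{s}\Gamma_{0}$ of $_{s}\Gamma_{\widehat{B}}$. Then
\[\StHom_{A}(X,X)\cong\bigoplus_{\ell\in\mathbb{Z}}\StHom_{\widehat{B}}(\overline{\varphi}^{\ell}(\widetilde{X}),\widetilde{X}).\]
By (\ref{AR-quiv-3}), $\overline{\varphi}^{\ell}(\widetilde{X})$ lies in $_{s}\Gamma_{\ell}$. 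So the goal is to show that the summand for $\ell=0$ is $k$ and that every other summand vanishes.

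For $\ell=0$ we use the triangle equivalence $\widehat{B}\text{-}\stmod\simeq\mathcal{D}^{b}(H)$ with $H$ hereditary of type $\widetilde{A}_{m}$. Under this equivalence $_{s}\Gamma_{0}$ corresponds to a shift of the preprojective/preinjective component, which is a directed component. Objects there are exceptional, so $\StHom_{\widehat{B}}(\widetilde{X},\widetilde{X})\cong k$. Equivalently, Proposition \ref{repetitive-stable generalized} says the component is stable generalized standard, hence every nonzero endomorphism is a finite sum of composites of irreducible maps. There is no oriented cycle through $\widetilde{X}$ in $\mathbb{Z}\widetilde{A}_{p,q}$, so the radical part is zero.

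For $\ell\neq0$, stable Hom in $\mathcal{D}^{b}(H)$ between the Euclidean components $_{s}\Gamma_{\ell}$ and $_{s}\Gamma_{0}$ vanishes except for neighbouring components. Indeed $\Hom(\mathcal{D}^{b}(H)$-degree $\ge2$ terms$)=0$ because $H$ is hereditary, and maps only go forward through tubes. Using Serre duality $\StHom(Y,Z)\cong D\StHom(Z,\nu\Omega Y)$, where $\nu_{\widehat{B}}\Omega_{\widehat{B}}$ sends $_{s}\Gamma_{i}$ to $_{s}\Gamma_{i+1}$, the only possibly nonzero terms are $\ell=\pm1$. This is exactly the content of the description $\LS_{\widehat{B}}(\widetilde{X})\cap\varSigma=\mathcal{P}_{\widehat{B}}(\widetilde{X})\cup\mathcal{S}_{\widehat{B}}(\nu^{-1}_{\widehat{B}}\Omega^{-1}_{\widehat{B}}(\widetilde{X}))$ in the proof of Proposition \ref{supp-bipendic}. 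It remains to show $\StHom_{\widehat{B}}(\overline{\varphi}^{\pm1}(\widetilde{X}),\widetilde{X})=0$. Applying $\overline{\varphi}^{\mp1}$, these reduce to $\StHom(\widetilde{X},\overline{\varphi}^{\mp1}\widetilde{X})$ and $\StHom(\overline{\varphi}\widetilde{X},\widetilde{X})$.

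Here we invoke Corollary \ref{varphi-biperpen}: $\overline{\varphi}(\widetilde{X})\in{^{\bot}\Omega_{\widehat{B}}^{-1}(\widetilde{X})^{\bot}}$ and $\overline{\varphi}^{-1}(\widetilde{X})\in{^{\bot}\Omega_{\widehat{B}}(\widetilde{X})^{\bot}}$. This is not quite the pair needed, so we translate it. Write $Y=\overline{\varphi}^{-1}(\widetilde{X})\in{_{s}\Gamma_{-1}}$. Then $\StHom(\overline{\varphi}^{-1}\widetilde{X},\widetilde{X})=\StHom(Y,\overline{\varphi}Y)$. Apply Corollary \ref{varphi-biperpen} to $Y$, which lies in $^{\bot}\Omega^{-1}(Y)^{\bot}$ but is not quite what we need either. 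So the cleanest route is to redo the argument directly from Proposition \ref{action-of-varphi}. By that proposition, $\overline{\varphi}(0,a,b)=(1,i,j)$ with $a\le i<a+p$ and $b-q<j<b$. By Serre duality, $\StHom((0,a,b),Z)\ne0$ for $Z\in{_{s}\Gamma_{1}}$ iff $Z$ is a predecessor of $\nu\Omega(0,a,b)=\tau\Omega^{-1}(0,a,b)=(1,a-1,b-1)$ in the mesh category, i.e.\ $Z=(1,i',j')$ with $i'\le a-1$ and $j'\le b-1$ within the sectional range. Similarly $\StHom(Z,(0,a,b))\ne0$ for $Z\in{_{s}\Gamma_{-1}}$ iff $\Omega^{-1}Z\in{_{s}\Gamma_{0}}$ is a successor of $\tau^{-1}$-shifted data, which amounts to $Z=(-1,r,s)$ with $r\ge a+1$, $s\ge b+1$ in the appropriate window. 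Then the ranges in Proposition \ref{action-of-varphi} give $i\ge a$, which excludes $(1,i,j)$ from the first region. In the same way $r\le a+p-1$, $s\le b$ from (2) excludes $\overline{\varphi}^{-1}(\widetilde{X})$ from the second region. Hence both cross terms vanish, and $\StHom_{A}(X,X)\cong k$.

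The main obstacle is the precise bookkeeping of the hammock, i.e.\ the support of $\StHom((0,a,b),-)$ in $_{s}\Gamma_{1}$ in the coordinates $(i,j,k)$ of $\mathbb{Z}\widetilde{A}_{p,q}$, including the wrap-around (the periods $p$ and $q$ in the two directions). We need to check that the window where Proposition \ref{action-of-varphi} places $\overline{\varphi}(\widetilde{X})$ is disjoint from it. We would do this first for the $\widetilde{A}_{p,q}$ coordinates of $\mathcal{D}^{b}(H)$, where Hom from a preprojective indecomposable to the next shift is governed by dimension vectors, and then transport it.
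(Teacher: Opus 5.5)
Your argument is correct in substance and has the same skeleton as the paper's proof. Both use the covering decomposition $\StHom_{A}(X,X)\cong\bigoplus_{\ell}\StHom_{\widehat{B}}(\overline{\varphi}^{\ell}\widetilde{X},\widetilde{X})$. Both get the summand $\ell=0$ equal to $k$ because the component is directed and stable generalized standard. Both kill $|\ell|\geq 2$ (the paper quotes a lemma, you use $\widehat{B}$-$\stmod\simeq\mathcal{D}^{b}(H)$), and both kill the neighbouring summands via the position of $\overline{\varphi}(X)$ from Proposition \ref{action-of-varphi}.

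The real difference is the last step. You set aside Corollary \ref{varphi-biperpen} as ``not quite the pair needed'', but that is mistaken. Using $\nu_{\widehat{B}}=\overline{\varphi}^{2}$ and $\overline{\varphi}\Omega_{\widehat{B}}=\Omega_{\widehat{B}}\overline{\varphi}$, one Serre duality gives
\[\StHom_{\widehat{B}}(\overline{\varphi}^{-1}M,M)\cong \D\StHom_{\widehat{B}}(M,\Omega_{\widehat{B}}\overline{\varphi}M)\cong \D\StHom_{\widehat{B}}(\Omega^{-1}_{\widehat{B}}M,\overline{\varphi}M),\]
and the right side vanishes by that corollary; this is exactly the paper's proof. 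Your route instead checks directly in the hammock that $(1,i,j)$ is not a predecessor of $\nu_{\widehat{B}}\Omega_{\widehat{B}}(X)=(1,a-1,b-1)$. This is longer but self-contained, and it proves exactly the one-sided vanishing that is needed. The corollary, by contrast, is stated without proof and asserts two-sided orthogonality, whose other half is not automatic. That other half, $\StHom_{\widehat{B}}(\overline{\varphi}M,\Omega^{-1}_{\widehat{B}}M)$, is the only surviving covering summand of $\mathrm{Ext}^{1}_{A}(X,X)$. It is nonzero for instance when $i=a$ (allowed by Proposition \ref{action-of-varphi}), since then $(1,a,j)$ with $j<b$ precedes $(1,a,b)=\Omega^{-1}_{\widehat{B}}(X)$.

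Two things need fixing.

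First, in the hammock check you only use $i\ge a$, which disposes of the single lift $(i,j)$. Since $(i,j)\sim(i+np,\,j-nq)$, you must exclude every lift from the quadrant $\{x\le a-1,\ y\le b-1\}$. Lifts with $n\ge 0$ are excluded by $i\ge a$. Lifts with $n\le -1$ are excluded by $j>b-q$, because then $j+|n|q>b$. So both bounds of the window are used, and the check does go through.

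Second, your two region checks concern the same summand. Indeed $\StHom(\widetilde{X},\overline{\varphi}\widetilde{X})\cong\StHom(\overline{\varphi}^{-1}\widetilde{X},\widetilde{X})$ is the $\ell=-1$ term, so you verify it twice and never treat $\ell=+1$. That term, $\StHom_{\widehat{B}}(\overline{\varphi}\widetilde{X},\widetilde{X})$, is a morphism from ${}_{s}\Gamma_{1}$ back to ${}_{s}\Gamma_{0}$. It vanishes by the same directedness you invoked for $|\ell|\ge 2$, with no need for Proposition \ref{action-of-varphi}. The correct statement is that only $\ell\in\{0,-1\}$ survive the degree argument, and only $\ell=-1$ requires the position of $\overline{\varphi}(X)$.
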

\begin{proof}
By Theorem \ref{symmetric-algebras-of-Euclidean-type-1} $A$ is isomorphic to the algebra $\widehat{B}/<\varphi>$ and there is a canonical Galois covering 
$F:\widehat{B}\rightarrow A \cong\widehat{B}/<\varphi>,$ where $\varphi$ is a square root of  the Nakayama automorphism $\nu_{\widehat{B}}$ is of $\widehat{B}$. 
Since $\widehat{B}$ is locally support-finite, the push down functor 
\[F_{\lambda}: \widehat{B}\text{-} \m\rightarrow A\text{-} \m\]
induced by $F$ is dense.
Therefore we have the following isomorphism induced by $F_{\lambda}$.
\[\Hom_{A}(M,M)\cong\bigoplus_{ i\in\mathbb{Z}}\Hom_{\widehat{B}}(\varphi^{i}(M),M).\]
For an indecomposable non-projective module $M$ in $A$-$\m$, we shall use the same notations  in $\widehat{B}$-$\m$ if there is no confusion. 

Combining the action of $\nu_{\widehat{B}}$ in the AR-quiver of $\widehat{B}$ and \cite[Lemma 5.1]{SY1}, we have $\Hom_{\widehat{B}}(\varphi^{i}(M),M)=0$ for any $i\neq -1, 0$ and $1$.  Hence  \[\Hom_{A}(M,M)=\Hom_{\widehat{B}}(\varphi^{-1}(M),M)\bigoplus\Hom_{\widehat{B}}(M,M)\bigoplus\Hom_{\widehat{B}}(\varphi(M),M).\]
By Serre duality, \[\StHom_{\widehat{B}}(\overline{\varphi}^{-1}(M),M)\cong D\StHom_{\widehat{B}}(M,\nu_{\widehat{B}}\Omega_{\widehat{B}}\overline{\varphi}^{-1}(M))=D\StHom_{\widehat{B}}(M,\Omega_{\widehat{B}}\overline{\varphi}(M))\cong D\StHom_{\widehat{B}}(\Omega_{\widehat{B}}^{-1}(M),\overline{\varphi}(M)).\]
By Corollary \ref{varphi-biperpen},  $\overline{\varphi}(X)\in{^{\bot}{\Omega_{\widehat{B}}^{-1}(X)}^{\bot}}$.
Therefore  $\StHom_{\widehat{B}}(\overline{\varphi}^{-1}(M),M)\cong D\StHom_{\widehat{B}}(\Omega_{\widehat{B}}^{-1}(M),\overline{\varphi}(M))=0.$
It can be similarly proved that  $\StHom_{\widehat{B}}(\overline{\varphi}(M),M)=0$.  It follows from Proposition \ref{repetitive-stable generalized} that every object in $_{s}\Gamma_{\widehat{B}}$ is a stable brick in $\widehat{B}$-$\stmod$. 
Thus $\StHom_{A}(M,M)\cong\StHom_{\widehat{B}}(M,M)\cong k.$
\end{proof}

For the rest of this section, we consider the action of $\overline{\varphi}$ in quasi-tubes. Let $R$ be a representation-infinite algebra and $\mathcal{C}$ a quasi-tube  of rank $n\geq 1$ of the AR-quiver of $R$.
If $X$ is an indecomposable non-projective object lying at the end of $\mathcal{C}$, that is, a {\bf quasi-simple} of $\mathcal{C}$,
then for any natural number $r\geq1$, there is a unique infinite sectional path starting at $X$
\[
X = X(0)\rightarrow X(1)\rightarrow \cdots \rightarrow X(r-1)\rightarrow X(r)\rightarrow \cdots.
\]
A non-projective object in $\mathcal{C}$ is  of  {\bf quasi-length $r$} if it is of the form $X(r)$ for some quasi-simple $X$ of $\mathcal{C}$. Note that  the quasi-length of a quasi-simple is $0$ under our assumption.
\begin{Lem}\label{quasi-tube-varphi}
Let $B$ be an exceptional  Euclidean algebra of type $\widetilde{A}_{m}$ and $\varphi$ a square root of Nakayama automorphism $\nu_{\widehat{B}}$.  
\begin{enumerate}
\item  If $\overline{\varphi}$  preserves the orientation of irreducible maps in the Euclidean components, then $\overline{\varphi}(_{s}P_{i})=\Omega^{-1}_{\widehat{B}}(_{s}P_{i})$ and  $\overline{\varphi}(_{s}Q_{i})=\Omega^{-1}_{\widehat{B}}(_{s}Q_{i})$ for any quasi-tubes $_{s}P_{i}$ and $_{s}Q_{i}$ in the stable AR-quiver $_{s}\Gamma_{\widehat{B}}$ of $\widehat{B}$.	
\item If $\overline{\varphi}$ inverses  the orientation of irreducible maps in the Euclidean components, then $\overline{\varphi}(_{s}P_{i})=\Omega^{-1}(_{s}Q_{i})$ and  $\overline{\varphi}(_{s}Q_{i})=\Omega^{-1}(_{s}P_{i})$ for any quasi-tubes $_{s}P_{i}$ and $_{s}Q_{i}$  in the stable AR-quiver $_{s}\Gamma_{\widehat{B}}$ of $\widehat{B}$.
\end{enumerate}
\end{Lem}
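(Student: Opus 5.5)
The plan is to exploit the rigidity of $\overline{\varphi}$ with respect to the structure already established: $\overline{\varphi}$ is a triangle auto-equivalence of $\widehat{B}$-$\stmod$ commuting with $\Omega_{\widehat{B}}$ and with $\tau$ (since $\tau=\nu_{\widehat{B}}\Omega^{2}_{\widehat{B}}$ and $\varphi^{2}=\nu_{\widehat{B}}$). By (\ref{AR-quiv-3}) it maps ${_{s}\mathcal{C}_{i}}$ onto ${_{s}\mathcal{C}_{i+1}}={_{s}P_{i+1}}\vee{_{s}Q_{i+1}}\vee{_{s}\mathcal{H}_{i+1}}$, and it sends stable tubes to stable tubes preserving ranks. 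When $p\neq q$, ${_{s}P_{i}}$ must go to the unique tube of rank $p$ in ${_{s}\mathcal{C}_{i+1}}$, which is $\Omega^{-1}_{\widehat{B}}({_{s}P_{i}})={_{s}P_{i+1}}$. In that case (2) can only occur if it is vacuous or if $p=q$. So the real content is to decide, using the Euclidean components, which of the two rank-$p$/rank-$q$ tubes is hit. This matters only when $p=q$; when $p\neq q$ I need to show that the inverting case forces $p=q$, or else is compatible.

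To detect the tube, I will use Hom-support from the Euclidean component into the tubes. In $\mathcal{D}^{b}(H)$ with $H$ of type $\widetilde{A}_{p,q}$, the two exceptional tubes are distinguished by which quasi-simples receive nonzero maps from a given preprojective indecomposable. Concretely, a preprojective indecomposable corresponding to a vertex on the "$p$-arm" of $\widetilde{A}_{p,q}$ maps nonzero to only a specific subset of the quasi-simples of the rank-$p$ tube and the rank-$q$ tube. Under the labeling $(i,j,k)$, moving along the $j$-direction (length $p$) versus the $k$-direction (length $q$) corresponds to the two arms. Thus I will compute, for a fixed object $X=(i,a,b)$ in ${_{s}\Gamma_{i}}$, the sets of quasi-simples in ${_{s}P_{i}}$ and ${_{s}Q_{i}}$ with $\StHom_{\widehat{B}}(X,-)\neq 0$. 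These are determined by Proposition \ref{repetitive-stable generalized} together with the standard derived-category description. I will then apply $\overline{\varphi}$, which preserves nonvanishing of $\StHom$, and compare with Proposition \ref{action-of-varphi}.

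In case (1), $\overline{\varphi}(X)=(i+1,a+\frac{p-1}{2},b-\frac{q+1}{2})$ is obtained from $\Omega^{-1}_{\widehat{B}}(X)$ by an orientation-preserving shift. Hence the $j$-arm goes to the $j$-arm, and the tube receiving maps "through the $p$-direction" goes to the tube of rank $p$ in the next family, namely $\Omega^{-1}_{\widehat{B}}({_{s}P_{i}})$; likewise for $Q$. In case (2), $\overline{\varphi}$ reverses arrows, exchanging the two sectional directions of $\mathbb{Z}\widetilde{A}_{p,q}$. It therefore swaps the roles of the two arms, which forces $p=q$ for consistency with ranks, and sends ${_{s}P_{i}}$ to $\Omega^{-1}_{\widehat{B}}({_{s}Q_{i}})$ and ${_{s}Q_{i}}$ to $\Omega^{-1}_{\widehat{B}}({_{s}P_{i}})$.

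The main obstacle is making the "arm to tube" correspondence precise within the paper's coordinates $(i,j,k)$. I would first try to avoid explicit Hom computations by using the structure of the mouths instead. The rank-$p$ tube is characterized as the tube whose quasi-simples $E$ satisfy that $\{\tau^{t}E\}$ has period $p$, and the maps from $\mathcal{Y}$ into it factor through rays in the direction of irreducible maps $(i,j,k)\to(i,j+1,k)$. Since $\overline{\varphi}$ either preserves or swaps these two ray directions, the claim follows.
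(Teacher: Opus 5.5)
\textbf{Verdict: genuine gap.} The paper states this lemma without proof, so there is no argument to compare with. Your outline has the right architecture: $\overline{\varphi}$ is a triangle auto-equivalence commuting with $\Omega_{\widehat{B}}$ and $\tau$, it maps ${_{s}\mathcal{C}_{i}}$ onto ${_{s}\mathcal{C}_{i+1}}$ preserving ranks, and the two exceptional tubes must be told apart through the Euclidean component. But the step that carries all the content is only asserted. You never prove a criterion that attaches each of the two arrow directions of ${_{s}\Gamma_{i}}$ to exactly one exceptional tube, in a form that a triangle equivalence visibly transports.

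Your Hom-support sketch cannot supply this criterion as written. A single object of ${_{s}\Gamma_{i}}$ has nonzero stable Hom to exactly one quasi-simple of \emph{each} exceptional tube, whatever arm it lies on. Moreover, Proposition \ref{repetitive-stable generalized} only controls morphisms inside one component, not from ${_{s}\Gamma_{i}}$ into the tubes. The ray-factorization criterion in your last paragraph can be made true, but it is unproved, and in the paper's coordinates it is the wrong way round. The third term of an arrow $(i,j,k)\to(i,j+1,k)$ lies in the rank-$p$ tube (compare (\ref{bi-perp-21}) and (\ref{qua-Euc-5})). Hence morphisms into the rank-$p$ tube are transported isomorphically along the arrows $(i,j,k)\to(i,j,k+1)$, not along $(i,j,k)\to(i,j+1,k)$. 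This slip does not change the conclusion, but it shows the link was guessed rather than derived.

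What closes the gap is the characterization by triangles, which the paper itself uses in Section 5. Assume $p,q\geq 2$, so that adjacent vertices of ${_{s}\Gamma_{i}}$ have one-dimensional stable Hom. Then the third term $Z$ of a triangle $X\xrightarrow{\alpha}Y\to Z\to\Omega^{-1}_{\widehat{B}}X$ built on an arrow $\alpha$ is well defined. By \cite[Proposition 3.4]{Z}, the two arrows ending at a vertex have quasi-simple third terms, one in each exceptional tube. The AR triangle makes every mesh a homotopy cartesian square, so opposite sides of a mesh have isomorphic third terms; also, the third term of $\tau\alpha$ is $\tau Z$. Hence all arrows of one direction have their third terms in a single $\tau$-orbit, and the two directions of ${_{s}\Gamma_{i}}$ correspond bijectively to the two exceptional tubes containing these third terms.

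Since $\overline{\varphi}$ is a triangle auto-equivalence, $\overline{\varphi}(\alpha)$ is an arrow of ${_{s}\Gamma_{i+1}}$ with third term $\overline{\varphi}(Z)$. So $\overline{\varphi}$ sends the tube attached to a direction of ${_{s}\Gamma_{i}}$ to the tube attached to the image direction in ${_{s}\Gamma_{i+1}}$. The same argument for $\Omega^{-1}_{\widehat{B}}$, which preserves orientation by the paper's convention, identifies the tubes attached to ${_{s}\Gamma_{i+1}}$ as $\Omega^{-1}_{\widehat{B}}({_{s}P_{i}})$ and $\Omega^{-1}_{\widehat{B}}({_{s}Q_{i}})$. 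This proves (1) and (2) at once. Because $\overline{\varphi}$ preserves ranks, it also shows directly that the inverting case forces $p=q$, so your separate rank argument for $p\neq q$ is not needed.
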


\begin{Lem}\label{quasi-tube-brick}
Let $A$ be a representation-infinite symmetric algebra and $\mathcal{C}$ a quasi-tube of rank $p$ satisfying  $\Omega(\mathcal{C})=\mathcal{C}$.  If $\Omega(X)\cong \tau^{r}(X)$ for a quasi-simple $X$ in $\mathcal{C}$ and a positive integer $r$, then any module with quasi-length  larger than or equal to $p-r-1$ is not a stable brick in $A$-$\stmod$.
\end{Lem}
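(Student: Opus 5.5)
The plan is to detect a non-brick through Serre duality in the stable category, computing both sides inside the tube. Since $A$ is symmetric, the Nakayama functor is the identity, so $\tau\cong\Omega^{2}$ on $A$-$\stmod$ and the stable Serre functor is $\Omega$:
\[
\StHom_{A}(M,N)\cong D\StHom_{A}(N,\Omega M)\qquad\text{for all } M,N\in A\text{-}\stmod .
\]
Let $M=X(\ell)$ have quasi-length $\ell\geq p-r-1$, where $X$ is a quasi-simple of $\mathcal{C}$.

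\textbf{Step 1: $\Omega$ acts as $\tau^{r}$ on all of $\mathcal{C}$.} Since $\Omega$ is a stable self-equivalence with $\Omega(\mathcal{C})=\mathcal{C}$, it sends quasi-simples to quasi-simples and sectional paths to sectional paths. As $\Omega X\cong\tau^{r}X$, induction along the sectional path $X(0)\to X(1)\to\cdots$, using that $\Omega$ commutes with $\tau$ (as $\tau=\Omega^{2}$), gives $\Omega(X(\ell))\cong\tau^{r}(X(\ell))$ for every $\ell$, and likewise for every quasi-simple in the $\tau$-orbit of $X$. Combined with Serre duality,
\[
\StHom_{A}(M,M)\cong D\StHom_{A}(M,\tau^{r}M).
\]
So it suffices to show $\dim_k\StHom_{A}(M,\tau^{r}M)\geq 2$, or at least that $\StHom_{A}(M,M)$ contains a nonzero non-invertible element.

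\textbf{Step 2: two independent maps $M\to\tau^{r}M$.} I would use the hammock of $M$ in the tube. Nonzero maps out of $M=X(\ell)$ are composites of the irreducible epimorphisms $X(\ell)\to\tau^{-1}X(\ell-1)\to\cdots$ down the coray, followed by irreducible monomorphisms up the ray. In a tube of rank $p$ we have $\tau^{r}M=\tau^{-(p-r)}M$, so $\tau^{r}M$ lies in this hammock rectangle once the wrap-around reaches it. This happens exactly when the length $\ell+1$ of the sectional paths is at least $p-r$, which is the hypothesis $\ell\geq p-r-1$. This gives a first map $g_{1}\colon M\to\tau^{r}M$ that factors through a module of strictly smaller quasi-length. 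A second map $g_{2}$ is the one Serre-dual to $\mathrm{id}_{M}$, which is nonzero because $M$ is not projective. It corresponds to the socle-type (``almost split'') map and cannot factor through any proper quotient in the tube. Since $g_{1}$ and $g_{2}$ have different factorization behaviour along the rays of $\mathcal{C}$, they are linearly independent. Alternatively, if $g_{1}$ is itself the dual of $\mathrm{id}_M$, I would argue directly that $g_{1}$ dualizes to a nonzero endomorphism of $M$ that is not a multiple of the identity.

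\textbf{Main obstacle.} The delicate point is to prove that the composite $g_{1}$ is nonzero in $A$-$\stmod$, i.e.\ that it neither vanishes by the mesh relations nor factors through a projective-injective module attached to the quasi-tube. I would handle this by working in the stable quasi-tube ${}_{s}\mathcal{C}$, where the mesh category describes the stable homomorphisms between modules of $\mathcal{C}$; in the situation of this paper this is available because the components are stable generalized standard (Proposition \ref{repetitive-stable generalized}, transported via the covering functor). In the mesh category, a composite of $s$ irreducible epimorphisms followed by $t$ irreducible monomorphisms starting at $X(\ell)$ is nonzero precisely when $s\leq\ell$. Checking that the path to $\tau^{r}M$ satisfies this inequality under $\ell\geq p-r-1$ is the key combinatorial verification, and I expect it to require the most care, including the off-by-one bookkeeping caused by the convention that quasi-simples have quasi-length $0$.
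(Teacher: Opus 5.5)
\textbf{The key count in Step 2 is off by one, and at that boundary the statement itself is false.} First, $r$ has to be read with $0<r<p$, since otherwise $p-r-1<0$. Because $\tau\cong\Omega^{2}$, we get $\tau^{2r-1}X\cong X$, hence $2r-1=p$ and $r=(p+1)/2$.

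A forward map from $M=X(\ell)$ to $\tau^{r}M=\tau^{-(p-r)}M$ must run $p-r$ arrows down the coray and then $p-r$ arrows up the ray. By your own criterion ($s\le\ell$), such a composite is nonzero iff $\ell\geq p-r$. The coray from $X(\ell)$ has $\ell+1$ vertices but only $\ell$ arrows, and you counted the vertices. So for $\ell=p-r-1$ there is no $g_1$. In the paper's setting $\StHom_A(M,\tau^{r}M)$ is then one-dimensional (spanned by $g_2$), and $M$ is a stable brick.

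This happens in the paper's Example~\ref{1-domestic-1}; write modules top to socle.
\begin{itemize}
\item $P_1=1/2/4/1$ is uniserial, and the almost split sequence $0\to\rad P_1\to P_1\oplus 2/4\to P_1/\mathrm{soc}\,P_1\to 0$ shows that $X=\rad P_1=2/4/1$ is quasi-simple with $X(1)=2/4$.
\item Computing syzygies, $\Omega$ cycles $2/4/1\mapsto 3/2\mapsto 4/3\mapsto 1/2/4\mapsto 1\mapsto 2/4/1$.
\item With $\tau=\Omega^{2}$, this tube has rank $p=5$, is $\Omega$-stable, and $\Omega X=3/2=\tau^{3}X$. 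So $r=3$ and $p-r-1=1$.
\item Yet $\Hom_A(2/4,2/4)=k$, so $2/4$ is a stable brick of quasi-length $1$.
\end{itemize}
The paper itself lists $\{3,2/4,1,4/1/2\}$ as a simple-minded system. Similarly, the quasi-simple $4/1/2$ of the rank-$3$ tube has $r=2$ and $p-r-1=0$, yet is a stable brick.

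So no argument can give the bound $p-r-1$. What your strategy can give is the corrected statement: quasi-length $\geq p-r$ implies not a stable brick. (The bound $p-r-1$ becomes correct if $r$ is instead defined by $\Omega^{-1}X\cong\tau^{r}X$, i.e.\ $r=(p-1)/2$.)

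The paper's proof takes a slightly different route. It uses the Erdmann--Kerner splitting $\StHom_A(Y(s),Y(s))\cong\bigoplus_{t=-s}^{0}\StHom_A(Y(s),\tau^{t}Y)$ and exhibits the nonzero summands $t=-s$ and $\tau^{t}Y=\Omega Y$, whereas you dualize once and look for two maps $M\to\tau^{r}M$. It has the same slip: $\Omega Y=\tau^{-(p-r)}Y$ lies in the index range only when $s\geq p-r$, and for $s=p-r$ the two summands coincide.

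\textbf{Independence of $g_1$ and $g_2$.} Even for $\ell\geq p-r$, your justification needs repair. The mesh category of ${}_s\mathcal{C}$ does not compute $\StHom_A$ inside a tube with $\Omega(\mathcal{C})=\mathcal{C}$. Stable generalized standardness holds in $\widehat{B}$-$\stmod$ but does not descend to $A$. For lifts $M,N$ in the same tube of $\widehat{B}$-$\stmod$, covering theory gives
\[\StHom_A(M,N)\cong\StHom_{\widehat{B}}(M,N)\oplus\StHom_{\widehat{B}}(\overline{\varphi}^{-1}M,N).\]
The first summand is the mesh-category part, which contains $g_1$. The second comes from a different component of $\Gamma_{\widehat{B}}$ and contains the Serre-dual maps, in particular $g_2$ when $N=\tau^{r}M$. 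This direct sum, not ``different factorization behaviour'', is what makes $g_1$ and $g_2$ linearly independent. With it, your argument becomes a correct proof of the corrected bound in the paper's covering setting.
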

\begin{proof} 
Take an  object $Y(s)$ in $_{s}\mathcal{C}$.  By  \cite[2.2]{EK}, 
\begin{equation} \label{stable-brick-equation-1}
\StHom_{A}(Y(s),Y(s))\cong \bigoplus_{t=-s}^{t=0}\StHom_{A}(Y(s),\tau^{t}Y).
\end{equation}
It is clear that  $\StHom_{A}(Y(s),\tau^{-s}Y)\ncong0$  since there is a sectional path from $Y(s)$ to $\tau^{-s}Y$. Since $\Omega(X)\cong \tau^{r}(X)$ for the quasi-simple $X$ in $\mathcal{C}$ and a positive integer $r$,  $\Omega(W)\cong \tau^{r}(W)$ for any quasi-simple $W$ in $\mathcal{C}$. If $s\geq p-r-1$, then $\Omega(Y)$ is  in the set $\{\tau^{t}Y\mid -s\leq t\leq 0\}$. Thus  $\StHom_{A}(Y(s),\Omega(Y))$ is a direct summand at the right side of  the equation (\ref{stable-brick-equation-1}). By Serre duality,  \[\StHom_{A}(Y(s),\Omega(Y))\cong \D\StHom_{A}(\Omega(Y), \nu\Omega(Y(s)))\cong\D\StHom_{A}(Y, Y(s)).\] It is not isomorphic to zero.  Thus $\dim_{k} \StHom_{A}(Y(s),Y(s))\geq 2$ and $Y(s)$ is not a stable brick in $A$-$\stmod$.
\end{proof}

\begin{Lem}\label{quasi-tube-brick-2}
Let $A$ be a 1-domestic Brauer graph algebra and $\mathcal{C}$ a quasi-tube of rank $p$ satisfying  $\Omega(\mathcal{C})=\mathcal{C}$.  If $\Omega(X)\cong \tau^{r}(X)$ for a quasi-simple $X$ in $\mathcal{C}$ and a positive integer $r$, then 
\begin{enumerate}
\item Any module with quasi-length  less than $p-r-1$ is a stable brick in $A$-$\stmod$.
\item Any module with quasi-length  larger than or equal to $p-r-1$ is not a stable brick in $A$-$\stmod$.
\end{enumerate}
\end{Lem}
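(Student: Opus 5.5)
Part (2) needs no new argument: a $1$-domestic Brauer graph algebra is symmetric and representation-infinite, so it is exactly Lemma \ref{quasi-tube-brick} applied to $A$. The real work is (1). For it I would show that, for an object $Y(s)$ with $s<p-r-1$, the only nonzero summand in the decomposition (\ref{stable-brick-equation-1}) of $\StHom_{A}(Y(s),Y(s))$ is the identity summand $t=0$, and that this summand is one-dimensional.

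First I would use (\ref{stable-brick-equation-1}) from \cite[2.2]{EK}:
\[\StHom_{A}(Y(s),Y(s))\cong \bigoplus_{t=-s}^{0}\StHom_{A}(Y(s),\tau^{t}Y),\]
with $Y$ the quasi-simple at the start of the sectional path ending at $Y(s)$. The targets $\tau^{t}Y$, $-s\le t\le 0$, are $s+1$ distinct quasi-simples of $\mathcal{C}$, since $s+1<p$.

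I would then compute each summand through the covering $\overline{F_{\lambda}}:\widehat{B}$-$\stmod\to A$-$\stmod$. By (\ref{covering-iso-1}),
\[\StHom_{A}(Y(s),\tau^{t}Y)\cong\bigoplus_{i\in\mathbb{Z}}\StHom_{\widehat{B}}(\overline{\varphi}^{i}Y(s),\tau^{t}Y).\]
By Proposition \ref{repetitive-stable generalized} and the separation of the families ${}_s\mathcal{C}_i$ in (\ref{repet-AR-qui}), Hom from a tube can only land in the same tube or in its image under $\nu_{\widehat{B}}\Omega_{\widehat{B}}$, by Serre duality as in Proposition \ref{supp-bipendic}. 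So only finitely many $i$ contribute, namely those with $\overline{\varphi}^{i}Y(s)$ in the tube of $\tau^{t}Y$ or in its $\nu^{-1}_{\widehat{B}}\Omega^{-1}_{\widehat{B}}$-translate.

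Inside one tube of $\widehat{B}$, stable generalized standardness together with the standard mesh description of Hom in a standard tube gives a clean rule. $\StHom(Y(s),Z)\neq0$ for a quasi-simple $Z$ only when $Z=\tau^{-s}Y$, reached along the sectional path, and then the space is one-dimensional. This is nonzero inside the range $-s\le t\le 0$ only for $t=-s$, which is the identity-type summand.

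The remaining contributions come from the Serre-dual side, that is, from the objects $\Omega(\tau^{t}Y)$ as in the proof of Lemma \ref{quasi-tube-brick}. Because $\Omega$ acts on quasi-simples as $\tau^{r}$, these are $\tau^{t+r}Y$. Such a summand is nonzero only if $\tau^{t+r}Y$ lies among the $s+1$ quasi-simples forming the ray and coray of $Y(s)$, modulo $\tau^{p}$. This requires $r+t\equiv t'$ modulo $p$ for some $t,t'\in[-s,0]$, which is impossible exactly when $s<p-r-1$ (indeed when $s\le p-r-1$; this is the range the hypothesis guarantees).

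The main obstacle is the bookkeeping through $\overline{\varphi}$. The tube $\mathcal{C}$ of $A$ with $\Omega\mathcal{C}=\mathcal{C}$ lifts to tubes that $\overline{\varphi}$ permutes according to Lemma \ref{quasi-tube-varphi}, namely $\overline{\varphi}({}_sP_i)=\Omega^{-1}({}_sP_i)$ or $\Omega^{-1}({}_sQ_i)$. I would have to check that the shift $\overline{\varphi}^{\pm1}$ contributes precisely the $\Omega$-twist by $\tau^{r}$ and nothing else, so that the lifted computation agrees with the count above. Granting this, $\StHom_{A}(Y(s),Y(s))\cong k$, which proves (1).
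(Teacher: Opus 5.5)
\paragraph{Part (2): a genuine gap.} Your proof of (2) fails. The failure is the off-by-one in Lemma \ref{quasi-tube-brick}, which you cite exactly as the paper does.

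Normalise $0<r\le p$. Then $\Omega Y=\tau^{r}Y=\tau^{r-p}Y$, and this lies in $\{\tau^{t}Y\mid -s\le t\le 0\}$ if and only if $s\ge p-r$, not $s\ge p-r-1$.

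At $s=p-r$ there is a second problem. There $\Omega Y=\tau^{-s}Y$, so it sits in the same summand of (\ref{stable-brick-equation-1}) as the sectional-path map. (The identity lives in the summand $t=-s$, not $t=0$ as you first write.) So you must show that this single summand is $2$-dimensional, rather than exhibit two distinct nonzero summands.

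At $s=p-r-1$, assertion (2) is false, as the paper's own Example \ref{1-domestic-1} shows:
\begin{enumerate}
\item[$\bullet$] In the rank-$3$ tube, $\Omega(\begin{smallmatrix}2\\3\end{smallmatrix})=\begin{smallmatrix}4\\1\\2\end{smallmatrix}=\tau^{2}(\begin{smallmatrix}2\\3\end{smallmatrix})$, so $r=2$ and $p-r-1=0$. Yet the quasi-simple $\begin{smallmatrix}4\\1\\2\end{smallmatrix}$ is uniserial with pairwise distinct composition factors, hence a stable brick. It even occurs in the simple-minded systems listed there.
\item[$\bullet$] In the rank-$5$ tube ($r=3$), $\begin{smallmatrix}2\\4\end{smallmatrix}=\rad P_{1}/\mathsf{soc}\,P_{1}$ has quasi-length $1=p-r-1$ and is a stable brick.
\end{enumerate}

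Your own argument for (1) already detects this. You observe that the obstruction vanishes for all $s\le p-r-1$. That would make $Y(p-r-1)$ a stable brick, which directly contradicts what you assert in (2). The correct threshold in (2) is $p-r$.

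\paragraph{Part (1): different route, missing core step.} Here you depart from the paper, which only cites \cite[Proposition 2.2]{Ap}. A direct covering computation is a good self-contained replacement. However, the step you ``grant'' is the whole content, and your criterion that $\tau^{t+r}Y$ lie on the ray or coray of $Y(s)$ is not the right one.

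What you need is the following. By the definition of $r$, on the lift of $\mathcal C$ one has $\overline{\varphi}\,\Omega_{\widehat B}=\tau^{r}$. Hence in (\ref{covering-iso-1}) only the terms with $\overline{\varphi}^{0}$ and $\overline{\varphi}^{-1}$ survive, and
\[\StHom_A(Y(s),Y(s))\cong\Hom(\tilde Y(s),\tilde Y(s))\oplus\D\Hom(\tilde Y(s),\tau^{1-r}\tilde Y(s)).\]
Both Hom spaces are computed in a standard tube of $\widehat B$-$\stmod\simeq\mathcal D^{b}(H)$; the second is an $\mathrm{Ext}^1$ rewritten by Auslander--Reiten duality.

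Moreover, $\overline{\varphi}^{2}=\nu_{\widehat B}=\tau\Omega^{-2}_{\widehat B}$ forces $2r\equiv 1\pmod p$. So $r=(p+1)/2$ (compare (\ref{eq-01})) and $1-r\equiv r$. Consequently the extra term sits exactly in the summand $\StHom_A(Y(s),\Omega Y)$ singled out in Lemma \ref{quasi-tube-brick}. It is nonzero precisely when some $j\in[0,s]$ satisfies $j\equiv p-r\pmod p$, i.e.\ when $s\ge p-r$.

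This gives (1) for all $s<p-r$ and the corrected form of (2) at once.
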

\begin{proof}
It is a direct consequence  of  Lemma \ref{quasi-tube-brick} and Proposition 2.2 in \cite{Ap}. 
\end{proof}

\begin{Lem}\label{quasi-tube-brick-3}
Let $A$ be a 1-domestic Brauer graph algebra and $\mathcal{C}$ a quasi-tube of rank $p$ satisfying $\Omega(\mathcal{C})\neq\mathcal{C}$.  Then any object with quasi-length  less than $p-1$ is a stable brick in $A$-$\stmod$.
\end{Lem}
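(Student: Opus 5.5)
We work in $\widehat{B}$-$\stmod$ and push down along the covering $\overline{F_{\lambda}}$, as in the proof of Lemma \ref{stable-Euclidean}. Let $Y(s)$ lie in a quasi-tube $\mathcal{C}$ of $_{s}\Gamma_{A}$ of rank $p$ with $s<p-1$, and regard $Y(s)$ also as an object of a quasi-tube $\mathcal{T}$ of $_{s}\Gamma_{\widehat{B}}$. The covering isomorphism (\ref{covering-iso-1}) gives
\[\StHom_{A}(Y(s),Y(s))\cong\bigoplus_{i\in\mathbb{Z}}\StHom_{\widehat{B}}(\overline{\varphi}^{i}(Y(s)),Y(s)).\]
So it suffices to prove two things: the summand for $i=0$ is one-dimensional, and every summand with $i\neq 0$ vanishes.

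For the $i=0$ summand, Proposition \ref{repetitive-stable generalized} says $\mathcal{T}$ is stable generalized standard. Hence $\StHom_{\widehat{B}}(Y(s),Y(s))$ is computed by the mesh category of the tube, and we can use the formula of \cite[2.2]{EK} as in (\ref{stable-brick-equation-1}):
\[\StHom_{\widehat{B}}(Y(s),Y(s))\cong\bigoplus_{t=-s}^{0}\StHom_{\widehat{B}}(Y(s),\tau^{t}Y).\]
Since $s<p-1$, the quasi-simples $\tau^{t}Y$ for $-s\le t\le 0$ are pairwise distinct in a tube of rank $p$. In the mesh category only $t=-s$, reached by the sectional path, gives a nonzero space, and it is $k$. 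The point is that the obstruction in Lemma \ref{quasi-tube-brick}, namely $\Omega(Y)$ reappearing among the $\tau^{t}Y$, cannot occur. In $\widehat{B}$ the functor $\Omega_{\widehat{B}}$ moves $\mathcal{T}$ to a different tube by (\ref{repet-AR-qui}), and the Serre-duality term $\StHom_{\widehat{B}}(Y(s),\nu_{\widehat{B}}\Omega_{\widehat{B}}(\,\cdot\,))$ lands in another component. Stable generalized standardness then kills the maps between these components, except those governed by Serre duality, which we exclude next.

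For $i\neq 0$, I would first use the hypothesis $\Omega(\mathcal{C})\neq\mathcal{C}$ together with Lemma \ref{quasi-tube-varphi}. In either case of that lemma, $\overline{\varphi}(\mathcal{T})$ is $\Omega^{-1}_{\widehat{B}}$ of a tube in the same family $_{s}\mathcal{C}_{i}$. Since $\Omega(\mathcal{C})\neq\mathcal{C}$, $\overline{\varphi}(\mathcal{T})$ is not $\Omega^{-1}_{\widehat{B}}(\mathcal{T})$; this should be the case where $\overline{\varphi}$ swaps $P$ and $Q$, or at least where $\overline{\varphi}(\mathcal{T})$ and $\Omega^{\pm1}_{\widehat{B}}(\mathcal{T})$ are distinct tubes. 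Nonzero stable maps from $\overline{\varphi}^{i}(Y(s))$ to $Y(s)$ can exist only if the source lies in $\mathcal{T}$, or in the component $\nu_{\widehat{B}}\Omega_{\widehat{B}}$-dual to it, i.e.\ $\nu^{-1}_{\widehat{B}}\Omega^{-1}_{\widehat{B}}(\mathcal{T})$. This follows from Serre duality, Proposition \ref{repetitive-stable generalized}, and the fact that stable $\operatorname{Hom}$ between distinct families $_{s}\mathcal{C}_{i},{_{s}\mathcal{C}_{j}}$ with $|i-j|\ge 2$ vanishes (the same input as \cite[Lemma 5.1]{SY1} used in Lemma \ref{stable-Euclidean}). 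By (\ref{AR-quiv-3}) and the relation $\overline{\varphi}^{2}=\nu_{\widehat{B}}$, this leaves only $i=\pm1$. For those two values I would check that $\overline{\varphi}^{\pm1}(\mathcal{T})$ is neither $\mathcal{T}$ nor its dual tube; this is exactly where $\Omega(\mathcal{C})\neq\mathcal{C}$ enters. So these summands vanish as well, and $Y(s)$ is a stable brick.

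The main obstacle is this last step: bookkeeping which tube $\overline{\varphi}^{\pm1}(\mathcal{T})$ is, relative to $\mathcal{T}$ and $\nu_{\widehat{B}}^{-1}\Omega_{\widehat{B}}^{-1}(\mathcal{T})$, in both cases of Lemma \ref{quasi-tube-varphi}. I would first translate the condition $\Omega(\mathcal{C})\neq\mathcal{C}$ in $A$ into a statement about $\overline{\varphi}$ and $\Omega_{\widehat{B}}$ on $\mathcal{T}$. I expect it to mean $\overline{\varphi}^{-1}\Omega^{-1}_{\widehat{B}}(\mathcal{T})\neq\mathcal{T}$. Combined with $\nu_{\widehat{B}}=\tau\Omega^{-2}_{\widehat{B}}$, which preserves tubes up to $\Omega^{-2}_{\widehat{B}}$, this should give the required non-coincidences.
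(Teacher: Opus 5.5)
Your proposal is correct and takes the same route as the paper, whose proof is a one-line appeal to covering theory, the stable generalized standardness of the components of $\Gamma_{\widehat{B}}$, and the Erdmann--Kerner decomposition of $\StHom(Y(s),Y(s))$; you make explicit the vanishing of the summands $\StHom_{\widehat{B}}(\overline{\varphi}^{i}(Y(s)),Y(s))$ for $i\neq 0$, which the paper leaves implicit. In fact only $i=-1$ needs the hypothesis: a nonzero summand there would force $\overline{\varphi}^{-1}(\mathcal{T})=\nu_{\widehat{B}}^{-1}\Omega_{\widehat{B}}^{-1}(\mathcal{T})=\Omega_{\widehat{B}}(\mathcal{T})$, which is exactly what $\Omega(\mathcal{C})\neq\mathcal{C}$ rules out, while the $i=1$ summand vanishes without it.
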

\begin{proof}
It follows directly from covering theory, Proposition \ref{repetitive-stable generalized} and the equation (\ref{stable-brick-equation-1}).
\end{proof}

\section{The construction of simple-minded systems}
Let $A$ be a 1-domestic Brauer graph algebra. In this section, we shall provide a construction of simple-minded systems in $A$-$\stmod$. By Corollary \ref{sms-BGA}, we need present a construction of maximal orthogonal systems containing at least one object of the stable Euclidean component.  We assume that $A=\widehat{B}/<\varphi>$ and the stable AR-quiver $_{s}\Gamma_{A}$ of $A$ consists of a stable Euclidean component $_{s}\Gamma$ of the form $\mathbb{Z}A_{p,q}$, one stable quasi-tube $_{s}P$ of rank $p$, one stable quasi-tube $_{s}Q$ of rank $q$ and infinitely many homogeneous tubes. By \cite[Theorem 1.3]{CLZ}, any module of quasi-length larger than or equal to the rank of quasi-tube can not be  in a simple-minded system, in particular, any  module in a homogeneous tube is not in a simple-minded  system. Therefore we will not consider any homogeneous tubes in this section.  

Recall that we use $(0,i,j)$ to  represent  an object in the stable Euclidean component $_{s}\Gamma$, $\tau(0,i,j)=(0,i-1,j-1)$ and $(0,i,j)=(0,i-p\ell,j+qk)$ for any integers $\ell$ and $k$. We  use $P(0,i,j)$ (resp. $Q(0,i,j)$) to  represent  an object in the  stable quasi-tube $_{s}P$ (resp. $_{s}Q$). We   assume that 
\[\tau P(0,x,y)=P(0,x-1,y),\  \text{and}\ \tau Q(0,x,y)=Q(0,x-1,y).\] 
We  also assume that 
\[\Omega^{-1}((0,i,j))=(1,i,j), \ \Omega^{-1}(P(0,i,j))=P(1,i,j)\ \text{and }\ \Omega^{-1}(Q(0,i,j))=Q(1,i,j).\]

We now provide a construction of  maximal orthogonal systems containing at least one object for the stable Euclidean component. Take an object $X=(0,a,b)$ in the stable Euclidean component  $_{s}\Gamma$. By Proposition \ref{supp-bipendic}, 
\begin{equation}\label{bi-pen-1}
\begin{aligned}
^{\bot}X^{\bot}\bigcap{_{s}\Gamma}={_{s}\Gamma}\setminus\left(\mathcal{P}_{A}(X)\bigcup\mathcal{S}_{A}(X)\bigcup\mathcal{S}_{A}(\Omega^{-1}(X))\bigcup\mathcal{P}_{A}(\Omega(X))\right).
\end{aligned}
\end{equation}
By covering theory, the position of $\Omega(X)$ and $\Omega^{-1}(X)$ in the stable AR-quiver $_{s}\Gamma$ are determined by the action of $\overline{\varphi}$
and they are contained in a finite area related to $X$ in $_{s}\Gamma$ by Proposition \ref{action-of-varphi}. 
Thus $^{\bot}X^{\bot}\cap{_{s}\Gamma}$ is  determined.  $^{\bot}X^{\bot}\cap({_{s}Q}\cup{_{s}P})$ can be determined similarly. 

Take an object $Y(s)$ in stable quasi-tube $_{s}Q$.  By Proposition \ref{supp-bipendic}, 
\begin{equation}\label{bi-pen-2}
\begin{aligned}
^{\bot}Y(s)^{\bot}\bigcap\left({_{s}Q}\bigcup{_{s}P}\right)=&\left({_{s}Q}\bigcup{_{s}P}\right)\setminus\left(\mathcal{P}_{A}(Y(s))\bigcup\mathcal{S}_{A}(Y(s))\bigcup\mathcal{S}_{A}(\Omega^{-1}(Y(s)))\bigcup\mathcal{P}_{A}(\Omega(Y(s)))\right)
\end{aligned}
\end{equation}
According to  Erdmann and Kerner  in \cite[2.2]{EK}, for any object $W$ in the stable Euclidean component  $_{s}\Gamma$, 
\begin{equation}\label{Qua-Euc-0}
\StHom_{A}(W,Y(s))\cong \bigoplus_{t=-s}^{t=0}\StHom_{A}(W,\tau^{t}Y),\  \ \ 
\StHom_{A}(Y(s),W)\cong \bigoplus_{t=-s}^{t=0}\StHom_{A}(\tau^{t}Y,W).
\end{equation}
Thus 
\begin{equation} \label{Qua-Euc-1}
\begin{aligned}
\LS_{A}(Y(s))\bigcap{_{s}\Gamma}=&\bigcup_{t=-s}^{0}\LS_{A}(\tau^{t}Y)\bigcap{_{s}\Gamma},\\
\RS_{A}(Y(s))\bigcap{_{s}\Gamma}=&\bigcup_{t=-s}^{0}\RS_{A}(\tau^{t}Y)\bigcap{_{s}\Gamma}.
\end{aligned}
\end{equation}
Then
\begin{equation} \label{Qua-Euc-3}
\begin{aligned}
^{\bot}Y(s)^{\bot}\bigcap{_{s}\Gamma}=&{_{s}\Gamma}\backslash\left(\left(\LS_{A}(Y(s))\bigcup\RS_{A}(Y(s))\right)\bigcap{_{s}\Gamma}\right)\\
=&{_{s}\Gamma}\backslash\left(\left(\bigcup_{t=-s}^{0}\LS_{A}(\tau^{t}Y)\bigcup_{t=-s}^{0}\RS_{A}(\tau^{t}Y)\right)\bigcap{_{s}\Gamma}\right).
\end{aligned}
\end{equation}
Since $\tau^{t}Y$ is quasi-simple for each $t$ and  $\RS_{A}(\tau^{t}Y)\cap{_{s}\Gamma}$ is known (please refer to \cite[Section 3.3]{Z1}), 
$^{\bot}Y(s)^{\bot}\cap{_{s}\Gamma}$ is determined.

Take an orthogonal system $\mathcal{X}$ in the stable Euclidean component $_{s}\Gamma$.  By equations (\ref{bi-pen-1}) and (\ref{bi-pen-2}), we can determine the set $^{\bot}{\mathcal{X}}^{\bot}\cap({_{s}\Gamma}\cup{_{s}Q}\cup{_{s}P})$. Note that $^{\bot}{\mathcal{X}}^{\bot}\cap({_{s}\Gamma}\cup{_{s}Q}\cup{_{s}P})$ is a finite set. We add one object $X_{1}$ of $^{\bot}{\mathcal{X}}^{\bot}\cap({_{s}\Gamma}\cup{_{s}Q}\cup{_{s}P})\cap\mathcal{STB}_{A}$ to $\mathcal{X}$ such that $\mathcal{X}_{2}=\mathcal{X}\cup\{X_{1}\}$,  where $\mathcal{STB}_{A}$ is the set of stable bricks in $A$-$\stmod$. By equations (\ref{bi-pen-1}), (\ref{bi-pen-2}) and (\ref{Qua-Euc-3}), we may determine $^{\bot}{\mathcal{X}_{2}}^{\bot}\cap({_{s}\Gamma}\cup{_{s}Q}\cup{_{s}P})\cap\mathcal{STB}_{A}$, which is  a subset of $^{\bot}{\mathcal{X}}^{\bot}\cap({_{s}\Gamma}\cup{_{s}Q}\cup{_{s}P})\cap\mathcal{STB}_{A}$. Proceeding this process, within  finitely many steps, we may construct a maximal orthogonal system $\mathcal{X}_{m}$ containing  $\mathcal{X}$ for some positive integer $m$.  By Corollary \ref{sms-BGA}, $\mathcal{X}_{m}$ is  a simple-minded systems containing orthogonal system $\mathcal{X}$ in $A$-$\stmod$. According to the action of $\overline{\varphi}$, there are two cases to be considered. 

\subsection{The case that $\overline{\varphi}$ preserves the orientation of the irreducible maps in the Euclidean components.}
By Lemma \ref{quasi-tube-varphi},  $\Omega(_{s}P)={_{s}P}$ and $\Omega(_{s}Q)={_{s}Q}$. 
Let $X$ be an object  in the stable Euclidean component $_{s}\Gamma$.
By Proposition \ref{action-of-varphi}, $\Omega(X)$ is determined by $p$ and $q$, specifically, if $X=(0,a,b)$, then  
\begin{equation}
\begin{aligned}\label{eq-00}
\Omega(X)=(-1,a, b)=\left(0,a+\dfrac{p-1}{2}, b-\dfrac{q+1}{2}\right).
\end{aligned}
\end{equation}
It follows that 
\begin{equation}
\begin{aligned}\label{eq-01}
\Omega=\tau^{\dfrac{p+1}{2}}\left(\text{resp.}\  \Omega=\tau^{\dfrac{q+1}{2}}\right)
\end{aligned}
\end{equation}
in the stable quasi-tube $_{s}P$ (resp. $_{s}Q$).

By Proposition \ref{supp-bipendic},
\begin{equation}
\begin{aligned}\label{eq-0}
\LS_{A}(X)\bigcap{_{s}\Gamma}=\mathcal{P}_{A}(X)\bigcup\mathcal{S}_{A}(\Omega^{-1}(X)),\ \ 
\RS_{A}(X)\bigcap{_{s}\Gamma}=\mathcal{S}_{A}(X)\bigcup\mathcal{P}_{A}(\Omega(X)).
\end{aligned}
\end{equation}
Thus
\begin{equation}
\begin{aligned}\label{bi-perp-1}
^{\bot}X^{\bot}\bigcap{_{s}\Gamma}&={_{s}\Gamma}\setminus\left(\mathcal{P}_{A}(X)\bigcup\mathcal{S}_{A}(X)\bigcup\mathcal{S}_{A}(\Omega^{-1}(X))\bigcup\mathcal{P}_{A}(\Omega(X))\right)\\
&=\left\{(0,a+i,b-j)\mid 1\leq i\leq\dfrac{p-1}{2}, 1\leq j\leq\dfrac{q+3}{2}\right\}\\
&\bigcup\left\{(0,a+i,b-j)\mid \dfrac{p+1}{2}\leq i\leq p-1, \dfrac{q+1}{2}\leq j\leq q-1\right\}.
\end{aligned}
\end{equation}

Now we determine $^{\bot}X^{\bot}\cap({_{s}Q}\cup{_{s}P})$. 
Consider irreducible maps ended at $X=(0,a,b)$ in $\Gamma$ as follows. 
\[\alpha_{1}:(0,a,b-1)\rightarrow(0,a,b),\ \  \beta_{1}: (0,a-1,b)\rightarrow(0,a,b).\]
Extending $\alpha_{1}$ and $\beta_{1}$ to triangles in $A$-$\stmod$ as follows.
\begin{equation}
\begin{aligned}\label{bi-perp-21}
&(0,a,b-1)\xrightarrow{\alpha_{1}}(0,a,b)\xrightarrow{}Z_{0}\xrightarrow{} (0,a,b-1)[1],\\
&(0,a-1,b)\xrightarrow{\beta_{1}}(0,a,b)\xrightarrow{}Z_{1}\xrightarrow{} (0,a-1,b)[1].
\end{aligned}
\end{equation}
Dually,  there are precise two irreducible maps started at $X=(0,a,b)$	in ${_{s}\Gamma}$. 
\[\alpha_{2}:(0,a,b)\rightarrow(0,a,b+1),\ \  \beta_{2}:(0,a,b)\rightarrow(0,a+1,b).\]
Extending $\alpha_{2}$ and $\beta_{2}$ to triangles as follows.
\begin{equation}
\begin{aligned}\label{bi-perp-31}
&Z'_{0}\xrightarrow{}(0,a,b)\xrightarrow{\alpha_{2}}(0,a,b+1)\xrightarrow{} Z'_{0}[1],\\
&Z'_{1}\xrightarrow{}(0,a,b)\xrightarrow{\beta_{2}}(0,a+1,b)\xrightarrow{}Z'_{1}[1].
\end{aligned}
\end{equation}
By \cite[Proposition 3.4]{Z}, both $Z_{0}$ and $Z_{1}$ (resp. $Z'_{0}$ and $Z'_{1}$) are quasi-simples. Note that one of them is in a quasi-tube of rank $p$ and the other one is in a quasi-tube of rank $q$. Without loss of generality, we assume that $Z'_{0}$ (resp. $Z'_{1}$) is  in the quasi-tube $Q$ (resp. $P$) of rank $q$ (resp. $p$). We denote $Z'_{0}$ (resp. $Z'_{1}$) by $Q(0,0,0)$ (resp. $P(0,0,0)$).
By equation (\ref{eq-01}),
\[Z_{0}=\Omega(Z'_{0})=Q(-1,0,0)=Q\left(0,\dfrac{q+1}{2},0\right),\ \  Z_{1}=\Omega(Z'_{1})=P(-1,0,0)=P\left(0,\dfrac{p+1}{2},0\right).\]

\begin{Def}
Let $A$ be a self-injective algebra and $X=(j,k)$ an object in a stable quasi-tube. The {\bf wing} of $X$ is the set of objects in the quasi-tube given by
\begin{align*}
W_{X} &:= \{(m,\ell) \mid m\leq j,\;\; j+k\leq m+\ell\}. 
\end{align*}
\end{Def}

\begin{Def}\label{triangle-areas}
Let $A$ be a self-injective algebra and $(a,b)$ an object  in a stable  quasi-tube. Take the set 
\[\bigtriangleup_{(a,b)}\colon=\{(i,j)\mid a\leq i\leq a+b, a\leq i+j\leq a+b\}.\]
$\bigtriangleup_{(a,b)}$  is called {\bf triangle area of object $(a,b)$} and we call the number $b$ the {\bf height} of $\bigtriangleup_{(a,b)}.$
\end{Def}

 By equation (\ref{Qua-Euc-0}), 
\begin{equation}\label{eq-1}	
\begin{aligned}
\LS_{A}(0,a,b)\bigcap{_{s}Q}=W_{Q(0,0,0)},\ \ \ \ 
\RS_{A}(0,a,b)\bigcap{_{s}Q}=W_{Q(1,0,0)}=W_{Q\left(0,\dfrac{q+1}{2},0\right)};\\ 
\LS_{A}(0,a,b)\bigcap{_{s}P}=W_{P(0,0,0)}, \ \ \ \ \RS_{A}(0,a,b)\bigcap{_{s}P}=W_{P(1,0,0)}=W_{P\left(0,\dfrac{p+1}{2},0\right)}.
\end{aligned}	
\end{equation}

Thus 

\begin{equation}
\begin{aligned}\label{eq-3}	
\LS_{A}(0,a,b)\bigcap\left({_{s}Q}\bigcup{_{s}P}\right)&=W_{Q(0,0,0)}\bigcup W_{P(0,0,0)},\\
\RS_{A}(0,a,b)\bigcap\left({_{s}Q}\bigcup{_{s}P}\right)&=W_{Q\left(0,\dfrac{q+1}{2},0\right)}\bigcup W_{P\left(0,\dfrac{p+1}{2},0\right)}.
\end{aligned}	
\end{equation}

Thus 
\begin{equation}
\begin{aligned}\label{bi-perp-2}
^{\bot}X^{\bot}\bigcap{_{s}Q}=&{_{s}Q}\setminus\left(W_{Q(0,0,0)}\bigcup W_{Q\left(0,\dfrac{q-1}{2},0\right)}\right)\\
=&\left\{Q(0,j,k)\mid 1\leq j\leq\dfrac{q-3}{2}, 1\leq j+k\leq\dfrac{q-3}{2}\right\}\\
&\bigcup\left\{Q(0,j,k)\mid \dfrac{q+1}{2}\leq j\leq q-1, \dfrac{q+1}{2}\leq j+k\leq q-1\right\}\\
=&\bigtriangleup_{Q\left(0,1,\dfrac{q-5}{2}\right)}\bigcup\bigtriangleup_{Q\left(0,\dfrac{q+1}{2},\dfrac{q-3}{2}\right)}.
\end{aligned}
\end{equation}

\begin{equation}
\begin{aligned}\label{bi-perp-3}
^{\bot}X^{\bot}\bigcap{_{s}P}=&{_{s}P}\setminus\left(W_{P(0,0,0)}\bigcup W_{P\left(0,\dfrac{p-1}{2},0\right)}\right)\\
=&\left\{Q(0,j,k)\mid 1\leq j\leq\dfrac{p-3}{2}, 1\leq j+k\leq\dfrac{p-3}{2}\right\}\\
&\bigcup\left\{Q(0,j,k)\mid \dfrac{p+1}{2}\leq j\leq p-1, \dfrac{p+1}{2}\leq j+k\leq p-1\right\}\\
=&\bigtriangleup_{P\left(0,1,\dfrac{p-3}{2}\right)}\bigcup\bigtriangleup_{P\left(0,\dfrac{p+1}{2},\dfrac{p-3}{2}\right)}.
\end{aligned}
\end{equation}

By equations (\ref{bi-perp-2}) and (\ref{bi-perp-3}), we have 
\begin{equation}
\begin{aligned}\label{bi-perp-4}
^{\bot}X^{\bot}\bigcap\left({_{s}Q}\bigcup{_{s}P}\right)&=\left({_{s}Q}\bigcup{_{s}P}\right)\setminus\left(W_{Q(0,0,0)}\bigcup W_{P(0,0,0)}\bigcup W_{Q\left(0,\dfrac{q-1}{2},0\right)}\bigcup W_{P\left(0,\dfrac{p-1}{2},0\right)}\right)\\
&=\bigtriangleup_{Q\left(0,1,\dfrac{q-5}{2}\right)}\bigcup\bigtriangleup_{Q\left(0,\dfrac{q+1}{2},\dfrac{q-3}{2}\right)}\bigcup\bigtriangleup_{P\left(0,1,\dfrac{p-5}{2}\right)}\bigcup\bigtriangleup_{P\left(0,\dfrac{p+1}{2},\dfrac{p-3}{2}\right)}.
\end{aligned}
\end{equation}

\bigskip

Take an object $Q(0,k,\ell)$ in $_{s}Q$. 
By Proposition \ref{supp-bipendic}, 
\begin{equation}
\begin{aligned}\label{bi-perp-5}
\LS_{A} Q(0,k,\ell)\bigcap{_{s}Q}&=\mathcal{P}_{A}(Q(0,k,\ell))\bigcup\mathcal{S}_{A}(\Omega^{-1}(Q(0,k,\ell)))\\
&=\mathcal{P}_{A}(Q(0,k,\ell))\bigcup\mathcal{S}_{A}\left(\tau^{\dfrac{q-1}{2}}(Q(0,k,\ell))\right)\\
&=\mathcal{P}_{A}(Q(0,k,\ell))\bigcup\mathcal{S}_{A}\left(Q\left(0,k-\dfrac{q-1}{2},\ell\right)\right),
\end{aligned}
\end{equation}

\begin{equation}
\begin{aligned}\label{bi-perp-6}
\RS_{A} Q(0,k,\ell)\bigcap{_{s}Q}&=\mathcal{S}_{A}(Q(0,k,\ell))\bigcup\mathcal{P}_{A}(\Omega(Q(0,k,\ell)))\\
&=\mathcal{S}_{A}(Q(0,k,\ell))\bigcup\mathcal{P}_{A}\left(\tau^{\dfrac{q+1}{2}}(Q(0,k,\ell))\right)\\
&=\mathcal{S}_{A}(Q(0,k,\ell))\bigcup\mathcal{P}_{A}\left(Q\left(0,k-\dfrac{q+1}{2},\ell\right)\right).\\
\end{aligned}
\end{equation}

Thus 
\begin{equation}
\begin{aligned}\label{bi-perp-7}
&{^{\bot}Q(0,k,\ell)^{\bot}}\bigcap{_{s}Q}\\
=&{_{s}Q}\setminus\left(\mathcal{P}_{A}(Q(0,k,\ell))
\bigcup\mathcal{S}_{A}\left(Q\left(0,k-\dfrac{q-1}{2},\ell\right)\right)\bigcup\mathcal{S}_{A}(Q(0,k,\ell))\bigcup\mathcal{P}_{A}\left(Q\left(0,k-\dfrac{q+1}{2},\ell\right)\right)\right).
\end{aligned}
\end{equation}
Similarly, for an object $P(0,r,t)$ in the stable quasi-tube $_{s}P$, we have 
\begin{equation}
\begin{aligned}\label{bi-perp-8}
&{^{\bot}P(0,r,t)^{\bot}}\bigcap{_{s}P}\\
=&{_{s}P}\setminus\left(\mathcal{P}_{A}(P(0,r,t))\bigcup\mathcal{S}_{A}\left(P\left(0,r-\dfrac{p-1}{2},t\right)\right)\bigcup\mathcal{S}_{A}(P(0,r,t))\bigcup\mathcal{P}_{A}\left(P\left(0,r-\dfrac{p+1}{2},t\right)\right)\right).
\end{aligned}
\end{equation}
Moreover,
\begin{equation}
\begin{aligned}\label{bi-perp-9}
{^{\bot}Q(0,k,\ell)^{\bot}}\bigcap{_{s}P}={_{s}P}, \ \ 
{^{\bot}P(0,r,t)^{\bot}}\bigcap{_{s}Q}=	{_{s}Q}.
\end{aligned}
\end{equation}

Take an object $Y(s)$ with the quasi-simple $Y$ in stable quasi-tube $_{s}Q$. 
Consider  triangle 
\begin{equation} \label{qua-Euc-0}
Y[-1]\xrightarrow{}(0,a,b-1)\xrightarrow{\alpha}(0,a,b)\xrightarrow{} Y
\end{equation}
such that morphism $\alpha$ is induced by an irreducible map. 
By Lemma 3.8 in \cite{Z0}, there are triangles 
\begin{equation} \label{qua-Euc-00}
\tau^{-i}Y[-1]\xrightarrow{}(0,a,b+i-1)\xrightarrow{}(0,a,b+i)\xrightarrow{} \tau^{-i}Y
\end{equation}
for each $i\in\mathbb{Z}$.  By equations (\ref{Qua-Euc-0}) and  (\ref{qua-Euc-00}), 
\begin{equation} \label{qua-Euc-2}
\begin{aligned}
\LS_{A}(Y(s))\bigcap{_{s}\Gamma}=&\bigcup_{t=-s}^{0}\LS_{A}(\tau^{t}Y)\bigcap{_{s}\Gamma}\\
=&\{(0,\ell,b+j+kq)\mid  0\leq j\leq s, k, \ell\in\mathbb{Z}\}.
\end{aligned}
\end{equation}
By rotating triangle (\ref{qua-Euc-0}) to the right three times,  we have the following triangle
\begin{equation}\label{qua-Euc-20}
 Y\xrightarrow{}(1,a+1,b)\xrightarrow{}(1,a+1,b+1)\xrightarrow{} Y[1]	
 \end{equation}
 for each $i\in\mathbb{Z}$.
Thus  we have triangles 
\begin{equation}\label{qua-Euc-21}
\tau^{-i}Y\xrightarrow{}(1,a+1,b+i)\xrightarrow{\alpha}(1,a+1,b+i+1)\xrightarrow{} \tau^{-i}Y[1]
\end{equation}
for each $i\in\mathbb{Z}$.
By equations (\ref{Qua-Euc-0}) and (\ref{qua-Euc-21}), 
\begin{equation} \label{qua-Euc-3}
\begin{aligned}
\RS_{A}(Y(s))\bigcap{_{s}\Gamma}=&\bigcup_{t=-s}^{0}\RS_{A}(\tau^{t}Y)\bigcap{_{s}\Gamma}\\
=&\{(1,\ell,b+j+kq)\mid 0\leq j\leq s, k, \ell\in\mathbb{Z}\}\\
=&\left\{(0,\ell,b+j-\dfrac{q+1}{2}+kq)\mid  0\leq j\leq s,k, \ell\in\mathbb{Z}\right\}.
\end{aligned}
\end{equation}
 Note that  $(1,\ell,b+j+kq)=\left(0,\ell,b+j-\dfrac{q+1}{2}+kq\right)$ by the action of $\Omega$ in $_{s}\Gamma$. 
Thus 
\begin{equation} \label{qua-Euc-4}
\begin{aligned}
&^{\bot}Y(s)^{\bot}\bigcap{_{s}\Gamma}\\
=&{_{s}\Gamma}\backslash\left(\left(\LS_{A}(Y(s))\bigcup\LS_{A}(Y(s))\right)\bigcap{_{s}\Gamma}\right)\\
=&{_{s}\Gamma}\backslash\left(\{(0,\ell,b+j+kq)\mid 0\leq j\leq s, k,\ell\in\mathbb{Z}\}\bigcup\left\{(0,\ell,b+j-\dfrac{q+1}{2}+kq)\mid 0\leq j\leq s, k, \ell\in\mathbb{Z}\right\}\right)\\
=&\left\{(0,\ell,b+s+j+kq)\mid 1\leq j\leq b+\dfrac{q-1}{2}, k, \ell\in\mathbb{Z}\right\}\\
&\bigcup\left\{(0,\ell,b+s-j+kq)\mid \dfrac{q-1}{2}\leq j\leq b-s-1, k, \ell\in\mathbb{Z}\right\}.
\end{aligned}
\end{equation}

Take an object $Z(r)$ in the quasi-tube $_{s}P$. 
Consider  triangle 
\begin{equation} \label{qua-Euc-5}
Z[-1]\xrightarrow{}(0,c,d)\xrightarrow{\beta}(0,c+1,d)\xrightarrow{} Z
\end{equation}
such that morphism $\beta$ is induced by an irreducible map.  We have the similar conclusion as follows.
\begin{equation} \label{qua-Euc-6}
\begin{aligned}
\LS_{A}(Z(r))\bigcap{_{s}\Gamma}=&\bigcup_{t=-r}^{0}\LS_{A}(\tau^{t}Z)\bigcap{_{s}\Gamma}\\
=&\{(0,c+j+kp,\ell)\mid  0\leq j\leq r,k, \ell\in\mathbb{Z}\}.
\end{aligned}
\end{equation}

\begin{equation} \label{qua-Euc-7}
\begin{aligned}
\RS_{A}(Z(r))\bigcap{_{s}\Gamma}=&\bigcup_{t=-r}^{0}\RS_{A}(\tau^{t}Z)\bigcap{_{s}\Gamma}\\
=&\{(1,c+j+kp,\ell)\mid  0\leq j\leq r,k, \ell\in\mathbb{Z}\}\\
=&\left\{(0,c+j+\dfrac{p+1}{2}+kp,\ell)\mid  0\leq j\leq r,k, \ell\in\mathbb{Z}\right\}.
\end{aligned}
\end{equation}

\begin{equation} \label{qua-Euc-8}
\begin{aligned}
&^{\bot}Z(r)^{\bot}\bigcap{_{s}\Gamma}\\
=&{_{s}\Gamma}\backslash\left(\LS_{A}(Z(r))\bigcup\RS_{A}(Z(r))\right)\\
=&{_{s}\Gamma}\backslash\left(\{(0,c+j+kp,\ell)\mid 0\leq j\leq r,k, \ell\in\mathbb{Z}\}\bigcup\left\{(0,c+j+\dfrac{p+1}{2}+kp,\ell)\mid 0\leq j\leq r,k, \ell\in\mathbb{Z}\right\}\right)\\
=&\bigcup\left\{(0,c+r+j+kp,\ell)\mid 1\leq j\leq \dfrac{p-1}{2},k, \ell\in\mathbb{Z}\right\}\\
&\bigcup\left\{(0,c+r+j+kp,\ell)\mid \dfrac{p+3}{2}\leq j\leq p-r-1,k, \ell\in\mathbb{Z}\right\}.
\end{aligned}
\end{equation}

By equation (\ref{bi-perp-1}), we may construct any orthogonal system in the Euclidean component $_{s}\Gamma.$
Take an orthogonal systems $\mathcal{X}$ in $_{s}\Gamma$. we may construct a maximal orthogonal system containing  $\mathcal{X}$ in $A$-$\stmod$ by equations  (\ref{bi-perp-4}), (\ref{bi-perp-7})--(\ref{bi-perp-9}), (\ref{qua-Euc-4})  and (\ref{qua-Euc-8}).
We present an example as follows. 

 \begin{Ex}\label{1-domestic-1}
 Consider Brauer graph algebra $A$ with Brauer graph $B$ given by
 \begin{small}	
 \[\xymatrix@R=24pt@C=24pt@!0{
 	& & \bullet\\
 	&\bullet\ar@{-}[ur]^{1} &    \\
 \bullet\ar@{-}[rr]^{3}\ar@{-}[ur]^{2}& &\bullet \ar@{-}[ul]_{4}   \\  }\]
\end{small}

\bigskip

 \noindent Then the decomposition of left regular $A$-module  $_{A}A=~~\begin{matrix}1\\2\\4\\1\end{matrix}
 ~~\oplus~~\begin{matrix}2\\\begin{matrix}3\end{matrix}~~\begin{matrix}4\\1\end{matrix}
 	\\2\end{matrix}
 ~~\oplus ~~\begin{matrix}3\\\begin{matrix}2\end{matrix}~~\begin{matrix}4\end{matrix}
 	\\3\end{matrix}
 ~~\oplus~~\begin{matrix}4\\\begin{matrix}1\\2\end{matrix}~~\begin{matrix}3\end{matrix}\\4
 \end{matrix}.$
 The corresponding quiver $Q_{B}$ is the following diagram:
 $$\xymatrix{
 	1  \ar[r]^{\alpha} & 2 \ar[dl]_{\beta} \ar@/^/[dr]^{\delta_{1}}   \\
 	4 \ar[u]^{\gamma} \ar@/^/[rr]^{\varphi_{1}} &   & 3. \ar@/^/[ul]_{\delta_{2}}  \ar@/^/[ll]_{\varphi_{2}}
}
 $$
 \bigskip
It is known that the  AR-quiver $\Gamma_{A}$ consists of one Euclidean component $\Gamma$, one quasi-tube $P$ of rank $3$ and  one quasi-tube $Q$ of rank $5$,  as well as infinitely many homogeneous tubes.
 
By equations (\ref{bi-perp-1}), (\ref{bi-perp-2}) and (\ref{bi-perp-3}),
\[{^{\bot}{3}^{\bot}}\bigcap{_{s}\Gamma}=\left\{\begin{matrix}1\\2\end{matrix},~~ 2,~~4,~~\begin{matrix}4\\1\end{matrix}\right\},\] 

\[{^{\bot}{3}^{\bot}}\bigcap{_{s}P}={_{s}P}\backslash \left(W_{\tiny{\begin{matrix}3\\2\end{matrix}}}\bigcup W_{\tiny{\begin{matrix}4\\3\end{matrix}}}\right)=\triangle_{\tiny{\begin{matrix}2\\4\end{matrix}}}\bigcup\triangle_{1}=\left\{\begin{matrix}2\\4\end{matrix},~~ \begin{matrix}2\\4\\1\end{matrix},~~\begin{matrix}1\\2\\4\end{matrix},~~1\right\}\]
 and 
 \[{^{\bot}{3}^{\bot}}\bigcap{_{s}Q}={_{s}Q}\backslash \left(W_{\tiny{\begin{matrix}2\\3\end{matrix}}}\bigcup W_{\tiny{\begin{matrix}3\\4\end{matrix}}}\right)=\triangle_{\tiny{\begin{matrix}4\\1\\2\end{matrix}}}=\left\{\begin{matrix}4\\1\\2\end{matrix}\right\}.\]
 
Thus 
\begin{equation} \label{orth-system-1}
\begin{aligned}
{^{\bot}{3}^{\bot}}\bigcap {_{s}\Gamma_{A}}=\left\{\begin{matrix}1\\2\end{matrix},~~ 2,~~4,~~\begin{matrix}4\\1\end{matrix},~~ \begin{matrix}2\\4\end{matrix},~~ \begin{matrix}2\\4\\1\end{matrix},~~\begin{matrix}1\\2\\4\end{matrix},~~1,~~ \begin{matrix}4\\1\\2\end{matrix},~~\begin{matrix}4\\1\\2\end{matrix}\right\}.
\end{aligned}
 \end{equation}
 
We may extend the set $R_{1}=\{3\}$ to be a maximal orthogonal system by adding some objects of ${^{\bot}{3}^{\bot}}\cap{_{s}\Gamma_{A}}$. 
By adding one object to the former orthogonal system every time, within finitely many steps, we may  construct a maximal orthogonal system containing simple module $3$. 
For example, we first add the object $\begin{matrix}1\\2\end{matrix}$ to $R_{1}$ such that $R_{2}=R_{1}\bigcup\left\{\begin{matrix}1\\2\end{matrix}\right\}$. It is clear that $R_{2}$ is an orthogonal system.
By equations (\ref{bi-perp-1}), (\ref{bi-perp-2}) and (\ref{bi-perp-3}), 
\[{^{\bot}{R_{2}}^{\bot}}\bigcap{_{s}\Gamma}=\left\{4\right\}\subseteq{^{\bot}{R_{1}}^{\bot}}\bigcap{_{s}\Gamma},\]

\[{^{\bot}{R_{2}}^{\bot}}\bigcap{_{s}P}={_{s}P}\backslash \left(W_{\tiny{\begin{matrix}3\\2\end{matrix}}}\bigcup W_{\tiny{\begin{matrix}4\\3\end{matrix}}}\bigcup W_{\tiny{\begin{matrix}1\\2\\4\end{matrix}}}\bigcup W_{1}\right)=\triangle_{\tiny{\begin{matrix}2\\4\\1\end{matrix}}}=\left\{ \begin{matrix}2\\4\\1\end{matrix}\right\}\subseteq{^{\bot}{R_{1}}^{\bot}}\bigcap{_{s}P}\]
and 
\[{^{\bot}{R_{2}}^{\bot}}\bigcap{_{s}Q}={_{s}Q}\backslash \left(W_{\tiny{\begin{matrix}2\\3\end{matrix}}}\bigcup W_{\tiny{\begin{matrix}3\\4\end{matrix}}}\bigcup W_{\tiny{\begin{matrix}4\\1\\2\end{matrix}}}\right)=\varnothing\subseteq{^{\bot}{R_{1}}^{\bot}}\bigcap{_{s}Q}.\]
Thus 
\begin{equation}
\begin{aligned}
{^{\bot}{R_{2}}^{\bot}}\bigcap {_{s}\Gamma_{A}}=\left\{4,~~\begin{matrix}2\\4\\1\end{matrix}\right\}\subseteq	{^{\bot}{R_{2}}^{\bot}}\bigcap{_{s}\Gamma_{A}}.
\end{aligned}
\end{equation}
Then add simple module $\begin{matrix}4\end{matrix}$ to $R_{2}$ such that  $R_{3}=R_{2}\cup\{\begin{matrix}4\end{matrix}\}$. Therefore 
\begin{equation}
\begin{aligned}
{^{\bot}{R_{3}}^{\bot}}\bigcap {_{s}\Gamma_{A}}=\left\{\begin{matrix}2\\4\\1\end{matrix}\right\}.
\end{aligned}
\end{equation}
It is easy to know that $R_{4}=R_{3}\bigcup\left\{\begin{matrix}2\\4\\1\end{matrix}\right\}= \left\{3,~~\begin{matrix}1\\2\end{matrix},~~ 4,~~\begin{matrix}2\\4\\1\end{matrix}\right\}$ 
is a maximal orthogonal system containing $R_{1}.$ By going through all cases, we may construct all simple-minded  systems containing  $R_{1}$ as follows.

 \[ R_{4}=\left\{3,~~\begin{matrix}1\\2\end{matrix},~~ 4,~~\begin{matrix}2\\4\\1\end{matrix}\right\},
 \left\{3,~~\begin{matrix}2\\4\\1\end{matrix},~~\begin{matrix}1\\2\\4\end{matrix},~~ \begin{matrix}4\\1\\2\end{matrix}\right\},
 \left\{3,~~\begin{matrix}2\\4\end{matrix},~~ 1,~~\begin{matrix}4\\1\\2\end{matrix}\right\},
 \left\{3,~~2,~~ 4,~~1\right\}, \left\{3,~~2,~~\begin{matrix}4\\1\end{matrix},~~ \begin{matrix}1\\2\\4\end{matrix}\right\}.
 \]
 By Corollary \ref{sms-BGA}, They are all simple-minded systems containing simple module $3$ in $A$-$\stmod$. In this way, we may list all simple-minded systems in $A$-$\stmod$.
  
 We draw parts of the stable AR-components $_{s}\Gamma$, $_{s}P$ and $_{s}Q$  respectively as in the figure 1, 2 and 3. Note that the blue part in the diagram is contained in the stable bi-perpendicular category of simple module $3$.
\begin{figure}
\vspace{-2.5cm}
\begin{small}
\[\xymatrix@dr@R=22pt@C=22pt@!0{
&&  &&\scriptstyle \cdots\ar[rr] &&\scriptstyle \new{\tiny{\begin{matrix}4\\1\\2\end{matrix}}} && \\	
&&& \\
&& \scriptstyle \cdots \ar[rr]  &&\scriptstyle \tiny{\begin{matrix}2\\3\end{matrix}} \ar[uu]  \\
&&& \\
\scriptstyle \cdots\ar[rr] && \scriptstyle \tiny{\begin{matrix}3\\4\end{matrix}}\ar[uu]\\	
&& \\
\new{\tiny{\begin{matrix}4\\1\\2\end{matrix}}} \ar[uu] \\}\] 
\end{small}
\vspace{-1cm}
\caption{Stable quasi-tube $_{s}P$ of rank 3}
\end{figure}

\newpage
\begin{figure}
\vspace{-4.5cm}
\begin{small}
\[\xymatrix@dr@R=22pt@C=22pt@!0{
&&&&\scriptstyle &&\scriptstyle &&\scriptstyle \cdots\ar[rr]&&\scriptstyle\ar[rr]\new{\tiny{\begin{matrix}2\\4\end{matrix}}}&& \new{\tiny{\begin{matrix}1\\2\\4\end{matrix}}}\\
&&\\
&&\scriptstyle & &\scriptstyle  && \scriptstyle \cdots\ar[rr]&& \scriptstyle \tiny{\begin{matrix} 4\begin{matrix}\ \end{matrix}~~\begin{matrix}2 \end{matrix}\\\begin{matrix}\ \end{matrix}\begin{matrix}\ \end{matrix}\begin{matrix}\ \end{matrix}\begin{matrix}\ \end{matrix}~~\begin{matrix}3\end{matrix}~~\begin{matrix}4\\1\end{matrix}\end{matrix}} \ar[uu] \ar[rr]  &&\scriptstyle \new{\tiny{\begin{matrix}2\\4\\1\end{matrix}}} \ar[uu]&&\\	
&& \\	
\scriptstyle&&\scriptstyle  &&\scriptstyle \cdots \ar[rr] &&\scriptstyle \tiny{\begin{matrix}4\\\begin{matrix}1\end{matrix}~~\begin{matrix}3\end{matrix}\end{matrix}}\ar[uu]\ar[rr]&&\scriptstyle \tiny{\begin{matrix}4\\3\end{matrix}}\ar[uu]\\	
&& \\	
\scriptstyle  &&\scriptstyle \cdots\ar[rr]  &&\scriptstyle \tiny{\begin{matrix}\begin{matrix}3\end{matrix}~~\begin{matrix}1\end{matrix}\\2\end{matrix}}\ar[uu]\ar[rr] &&\scriptstyle \new{1}\ar[uu]  && \\	
&&& \\
\scriptstyle \cdots\ar[rr] && \scriptstyle \tiny{\begin{matrix} \begin{matrix}\ \end{matrix}~~\begin{matrix}\ \end{matrix}~~\begin{matrix}3\end{matrix}~~\begin{matrix}1\\2\end{matrix}\\\begin{matrix}2\end{matrix}~~4 \end{matrix}} \ar[uu] \ar[rr]  &&\scriptstyle \tiny{\begin{matrix}3\\2\end{matrix}} \ar[uu]  \\
&&& \\
\scriptstyle \new{\tiny{\begin{matrix}2\\4\end{matrix}}} \ar[uu]\ar[rr] && \scriptstyle \new{\tiny{\begin{matrix}1\\2\\4\end{matrix}}}\ar[uu]\\	
&& \\
\new{\tiny{\begin{matrix}2\\4\\1\end{matrix}}} \ar[uu] \\}\] 
\end{small}
\vspace{-1cm}
\caption{Stable quasi-tube $_{s}Q$ of rank 5}
\end{figure}

\begin{figure}
\vspace{-1cm}
\begin{equation*}\label{AR-gamma-0}
\xymatrix@dr@R=25pt@C=25pt@!0{
&&\scriptstyle 3\ar[rr] &&\scriptstyle \tiny{\begin{matrix}4\\ \begin{matrix}3\end{matrix}~~\begin{matrix}1\\2\end{matrix} \end{matrix}} \ar[rr]& &\scriptstyle
\tiny{\begin{matrix}  \begin{matrix}4\end{matrix}~~\begin{matrix}\ \end{matrix}\\\begin{matrix}3\end{matrix}~~\begin{matrix}1\end{matrix}~~\begin{matrix}3\end{matrix}\\\begin{matrix}\ \end{matrix}~~\begin{matrix}\ \end{matrix}~~\begin{matrix}\ \end{matrix}~~\begin{matrix}2 \end{matrix}~~\begin{matrix}4\end{matrix}\end{matrix}}
\ar[rr] && \scriptstyle\tiny{\begin{matrix}  \begin{matrix}4\end{matrix}~~\begin{matrix}\ \end{matrix}~~\begin{matrix}\ \end{matrix}~~\begin{matrix}\ \end{matrix}\\\begin{matrix}3\end{matrix}~~\begin{matrix}1\end{matrix}~~\begin{matrix}3\end{matrix}~~\begin{matrix}2\end{matrix}\\\begin{matrix}\ \end{matrix}~~\begin{matrix}\ \end{matrix}~~\begin{matrix}\ \end{matrix}~~\begin{matrix}2 \end{matrix}~~\begin{matrix}4\end{matrix}~~\begin{matrix}3\end{matrix}\end{matrix}}\\	
&& \\	
&&\scriptstyle \tiny{\begin{matrix} \begin{matrix}3\end{matrix}~~\begin{matrix}1\\2\end{matrix}\\4 \end{matrix}} \ar[uu] \ar[rr] &&\scriptstyle\new{\tiny{\begin{matrix}1\\2\end{matrix}}}\ar[rr]\ar[uu]&& 	\scriptstyle\tiny{\begin{matrix}\begin{matrix}1\end{matrix}~~\begin{matrix}3\end{matrix}\\\begin{matrix}\ \end{matrix}~~\begin{matrix}2 \end{matrix}~~\begin{matrix}4\end{matrix}\end{matrix}} \ar[rr]\ar[uu]&&\scriptstyle \tiny{\begin{matrix}  \\\begin{matrix}1\end{matrix}~~\begin{matrix}3\end{matrix}~~\begin{matrix}2\end{matrix}\\\begin{matrix}\ \end{matrix}~~\begin{matrix}2 \end{matrix}~~\begin{matrix}4\end{matrix}~~\begin{matrix}3\end{matrix}\end{matrix}} \ar[uu] \\	
&& \\	
&&\scriptstyle \tiny{\begin{matrix} \begin{matrix}3\end{matrix}~~\begin{matrix}2\end{matrix}\\4 \end{matrix}} \ar[uu]\ar[rr] &&\scriptstyle \new{2}\ar[rr]\ar[uu] &&\scriptstyle \old{\Omega^{-1}(3)=\tiny{\begin{matrix}\begin{matrix}3\end{matrix}\\\begin{matrix}2 \end{matrix}~~\begin{matrix}4\end{matrix}\end{matrix}}}\ar[rr]\ar[uu] && \scriptstyle\tiny{\begin{matrix}  \\\begin{matrix}\ \end{matrix}~~\begin{matrix}3\end{matrix}~~\begin{matrix}2\end{matrix}\\\begin{matrix}\ \end{matrix}~~\begin{matrix}2 \end{matrix}~~\begin{matrix}4\end{matrix}~~\begin{matrix}3\end{matrix}\end{matrix}}\ar[uu] \\	
&&	& \\
&&\scriptstyle \tiny{\begin{matrix} \begin{matrix}3\end{matrix}~~\begin{matrix}2\end{matrix}~~\begin{matrix}4\end{matrix}\\ 4 ~~\begin{matrix}\ \end{matrix}~~\begin{matrix}3\end{matrix}\end{matrix}} \ar[rr] \ar[uu]&& \scriptstyle\old{\Omega(3)=\tiny{\begin{matrix} \begin{matrix}2\end{matrix}~~\begin{matrix}4\end{matrix}\\ \begin{matrix}3\end{matrix}\end{matrix}} }\ar[rr]\ar[uu] &&\scriptstyle \new{4}\ar[rr] \ar[uu] && \scriptstyle\tiny{\begin{matrix} \\ \begin{matrix}2\end{matrix}\\\begin{matrix}4\end{matrix}~~\begin{matrix}3\end{matrix}\end{matrix}}\ar[uu] \\
&&& \\
&&\scriptstyle\tiny{\begin{matrix} \begin{matrix}3\end{matrix}~~\begin{matrix}2\end{matrix}~~\begin{matrix}4\end{matrix}~~\begin{matrix}\ \end{matrix}\\ 4 ~~\begin{matrix}\ \end{matrix}~~\begin{matrix}3\end{matrix}~~\begin{matrix}1\end{matrix}\end{matrix}} \ar[uu] \ar[rr] && \scriptstyle\tiny{\begin{matrix} \begin{matrix}2\end{matrix}~~\begin{matrix}4\end{matrix}~~\begin{matrix}\ \end{matrix}\\ \begin{matrix}\ \end{matrix}~~\begin{matrix}3\end{matrix}~~\begin{matrix}1\end{matrix}\end{matrix}}  \ar[rr]\ar[uu] && \scriptstyle\new{\tiny{\begin{matrix}4\\1\end{matrix}}}\ar[rr]\ar[uu]
&&\scriptstyle\tiny{\begin{matrix}  \\\begin{matrix}2\end{matrix}\\ \begin{matrix}4\\1\end{matrix}~~\begin{matrix}3\end{matrix}\end{matrix}} \ar[uu] \\ 
&&\\
&&\scriptstyle\tiny{\begin{matrix}\begin{matrix}3\end{matrix}~~\begin{matrix}2\end{matrix}~~\begin{matrix}4\end{matrix}~~\begin{matrix}\ \end{matrix}~~\begin{matrix}\ \end{matrix}~~\begin{matrix}\ \end{matrix}\\ 4 ~~\begin{matrix}\ \end{matrix}~~\begin{matrix}3\end{matrix}~~\begin{matrix}1\end{matrix}~~\begin{matrix} 3\end{matrix}\\~~\begin{matrix}\ \end{matrix}~~\begin{matrix}\ \end{matrix}~~\begin{matrix}\ \end{matrix}~~\begin{matrix}2\end{matrix}\end{matrix}}  \ar[uu] \ar[rr]&&\scriptstyle\tiny{\begin{matrix} \begin{matrix}2\end{matrix}~~\begin{matrix}4\end{matrix}~~\begin{matrix}\ \end{matrix}~~\begin{matrix}\ \end{matrix}~~\begin{matrix}\ \end{matrix}\\ \begin{matrix}3\end{matrix}~~\begin{matrix}1\end{matrix}~~\begin{matrix} 3\end{matrix}\\~~\begin{matrix}\ \end{matrix}~~\begin{matrix}2\end{matrix}\end{matrix}} \ar[rr]\ar[uu]&&\scriptstyle~~\tiny{\begin{matrix}\begin{matrix}3\end{matrix}~~\begin{matrix}4\\1\end{matrix}\\2\end{matrix}}\ar[rr]\ar[uu]&&\scriptstyle3\ar[uu] }
\end{equation*}
\caption{Stable Euclidean component $_{s}\Gamma$}
\end{figure}
\end{Ex}

\bigskip

\subsection{The case that $\overline{\varphi}$ inverses the orientation of the irreducible maps in the Euclidean components.}
By  Proposition \ref{repetitive-stable generalized}, every connected component of  $\{P_{n}\}_{n\in\mathbb{Z}}$ and  $\{Q_{n}\}_{n\in\mathbb{Z}}$ in the AR-quiver $\Gamma_{\widehat{B}}$ is  stable generalized standard.  By Lemma \ref{quasi-tube-varphi}, $\overline{\varphi}(_{s}P_{n})={_{s}Q_{n+1}}$ and $\overline{\varphi}(_{s}Q_{n})={_{s}P_{n+1}}$ in $\widehat{B}$-$\stmod$. It is known that 
$\Omega^{-1}_{\widehat{B}}(_{s}P_{n})={_{s}P_{n+1}}$ and $\Omega_{\widehat{B}}^{-1}(_{s}Q_{n})={_{s}Q_{n+1}}$ in $\widehat{B}$-$\stmod$. 
By covering theory, 
 Quasi-tubes $P$ of rank $p$ and $Q$ of rank $q$ of $\Gamma^{s}_{A}$ satisfy  conditions $\Omega(_{s}P)={_{s}Q}$ and $\Omega(_{s}Q)={_{s}P}$, and both $P$ and $Q$ are stable generalized standard components. Note that $p=q=n$ is the number of non-projective simple $A$-modules.
We take an example as follows. 
\begin{Ex}\label{1-domestic-2}
Consider Brauer graph algebra $C$ with Brauer graph $D$ given by
\begin{center}
\begin{tiny}	
\[\xymatrix@R=30pt@C=30pt@!0{\bullet\ar@{-}[dr]_{1}& & &&\bullet\\
& \bullet\ar[rr]^{3}&&  \bullet\ar@{-}[dr]^{5}\ar@{-}[ur]^{4}&  \\
\bullet\ar@{-}[ur]_{2}& & && \bullet \ .\\  }\]
\end{tiny}
\end{center}
	
\noindent Then the decomposition of left regular $C$-module  $_{C}C=~~\begin{matrix}1\\\begin{matrix}1\end{matrix}~~\begin{matrix}2\\3\end{matrix}\\1\end{matrix}
~~\oplus~~\begin{matrix}2\\3\\1\\2\end{matrix}
~~\oplus~~\begin{matrix}3\\\begin{matrix}1\\2\end{matrix}~~\begin{matrix}4\\5\end{matrix}
\\3\end{matrix}
~~\oplus ~~\begin{matrix}4\\\begin{matrix}5\\3\end{matrix}~~\begin{matrix}4\end{matrix}
\\4\end{matrix}
~~\oplus~~\begin{matrix}5\\3\\4\\5\end{matrix}\ .$
The corresponding quiver $Q_{D}$ is the following diagram:
$$\xymatrix{
& 1\ar@(ul,dl)[]_{a}   \ar[d]_{\alpha} & 3 \ar[l]_{\gamma}\ar[r]^{\delta} & 4 \ar@(ur,dr)[]^{b} \ar[d]^{\varphi}\\
&2 \ar[ur]^{\beta}&   & 5\ .\ar[ul]^{\psi} }
$$
 \bigskip
It is known that the stable AR-quiver $\Gamma_{C}$ consists of one Euclidean component $\Gamma$, two quasi-tubes $P$  and $Q$ of rank $5$,  as well as infinitely many homogeneous tubes.	

We shall list all simple-minded systems containing  orthogonal system $S_{1}=\left\{ 1,~~\begin{matrix}5\\3\end{matrix}\right\}.$
By equations (\ref{bi-perp-1}), (\ref{bi-perp-2}) and (\ref{bi-perp-3}), 
\[{^{\bot}{S_{1}}^{\bot}}\bigcap{_{s}\Gamma}=\left\{\begin{matrix}3\\ \begin{matrix}1\\2\end{matrix}~~\begin{matrix}4\\5\end{matrix} \end{matrix},~~4,~~\begin{matrix}2\\3\end{matrix}\right\},\]

\[{^{\bot}{S_{1}}^{\bot}}\bigcap{_{s}P}={_{s}P}\backslash \left(W_{\tiny{\begin{matrix}2\\3\\1\end{matrix}}}\bigcup W_{\tiny{\begin{matrix}1\\2\\3\end{matrix}}}\right)=\triangle_{\tiny{ \begin{matrix}4\\4\end{matrix}}}=\left\{\begin{matrix}4\\4\end{matrix}\right\}\]
and 
\[{^{\bot}{S_{1}}^{\bot}}\bigcap {_{s}Q}={_{s}Q}\backslash \left(W_{\tiny{\begin{matrix}1\\1\end{matrix}}}\bigcup W_{\tiny{\begin{matrix}5\\3\\4\end{matrix}}}\bigcup W_{\tiny{\begin{matrix}4\\5\\3\end{matrix}}}\right)=\triangle_{\tiny{\begin{matrix}3\\4\\5\end{matrix}}}\bigcup\triangle_{\tiny{2}}=\left\{\begin{matrix}3\\4\\5\end{matrix},~~2\right\}.\]
Thus 
\begin{equation}\label{bi=pen-1}
\begin{aligned}
{^{\bot}{S_{1}}^{\bot}}\bigcap{ _{s}\Gamma_{C}}=\left\{\begin{matrix} \begin{matrix}3\end{matrix}\\ \begin{matrix}1\\2\end{matrix}~~\begin{matrix}4\\5\end{matrix} \end{matrix},~~4,~~\begin{matrix}2\\3\end{matrix},~~ \begin{matrix}4\\4\end{matrix},~~\begin{matrix}3\\4\\5\end{matrix},~~2\right\}.
\end{aligned}
\end{equation}
Our aim is to extend $S_{1}$ to be a maximal orthogonal system. 
By equation (\ref{bi=pen-1}), We may add an object $X$ of  ${^{\bot}{S_{1}}^{\bot}}\cap {_{s}\Gamma_{A}}$ to the set $S_{1}$ such that $S_{2}=S_{1}\cup\{X\}. $ By equations (\ref{bi-perp-1}), (\ref{bi-perp-2}) and (\ref{bi-perp-3}), we may determine ${^{\bot}{S_{2}}^{\bot}}\cap {_{s}\Gamma_{A}}$.  Proceeding this process, within finitely many steps,  we  may construct a maximal orthogonal systems containing  $S_{1}$. 
For example, add the object \, $\begin{matrix} \begin{matrix}3\end{matrix}\\ \begin{matrix}1\\2\end{matrix}~~\begin{matrix}4\\5\end{matrix} \end{matrix}$ to $S_{1}$ such that \[S_{2}=S_{1}\cup\left\{\begin{matrix} \begin{matrix}3\end{matrix}\\ \begin{matrix}1\\2\end{matrix}~~\begin{matrix}4\\5\end{matrix} \end{matrix}\right\}=\left\{1,~~\begin{matrix}5\\3\end{matrix},~~\begin{matrix} \begin{matrix}3\end{matrix}\\ \begin{matrix}1\\2\end{matrix}~~\begin{matrix}4\\5\end{matrix} \end{matrix}\right\}.\]
By equations (\ref{bi-perp-1}), (\ref{bi-perp-2}) and (\ref{bi-perp-3}), 
\[{^{\bot}{S_{2}}^{\bot}}\bigcap{_{s}\Gamma}=\left\{4,~~\begin{matrix}2\\3\end{matrix}\right\}\subseteq{^{\bot}{S_{1}}^{\bot}}\bigcap{_{s}\Gamma},\]

\[{^{\bot}{S_{2}}^{\bot}}\bigcap{_{s}P}={_{s}P}\backslash \left(W_{\tiny{\begin{matrix}2\\3\\1\end{matrix}}}\bigcup W_{\tiny{\begin{matrix}1\\2\\3\end{matrix}}}\right)=\triangle_{\tiny{ \begin{matrix}4\\4\end{matrix}}}=\left\{\begin{matrix}4\\4\end{matrix}\right\}={^{\bot}{S_{1}}^{\bot}}\bigcap{_{s}P}.\]
and 
\[{^{\bot}{S_{2}}^{\bot}}\bigcap{ _{s}Q}={_{s}Q}\backslash \left(W_{\tiny{\begin{matrix}1\\1\end{matrix}}}\bigcup W_{\tiny{\begin{matrix}5\\3\\4\end{matrix}}}\bigcup W_{\tiny{\begin{matrix}4\\5\\3\end{matrix}}}\bigcup W_{\tiny{2}}\bigcup W_{\tiny{\begin{matrix}3\\4\\5\end{matrix}}}\right)=\varnothing\subseteq{^{\bot}{S_{1}}^{\bot}}\bigcap{_{s}Q}.\]
Thus 
\begin{equation}
\begin{aligned}
{^{\bot}{S_{2}}^{\bot}}\bigcap {_{s}\Gamma_{C}}=\left\{4,~~\begin{matrix}2\\3\end{matrix},~~\begin{matrix}4\\4\end{matrix}\right\}\subseteq	{^{\bot}{S_{2}}^{\bot}}\bigcap {_{s}\Gamma_{C}}.
\end{aligned}
\end{equation}
Add the object \, $\begin{matrix}2\\3\end{matrix}$ to $S_{2}$ such that $S_{3}=S_{2}\cup\left\{\begin{matrix}2\\3\end{matrix}\right\}$. Then 

\begin{equation}
\begin{aligned}
{^{\bot}{S_{3}}^{\bot}}\bigcap {_{s}\Gamma_{A}}=\left\{4,~~\begin{matrix}4\\4\end{matrix}\right\}.
\end{aligned}
\end{equation}
It is easy to know that $S_{4}=S_{3}\cup\left\{4\right\}=\left\{1,~~\begin{matrix}5\\3\end{matrix},~~\begin{matrix}3\\ \begin{matrix}1\\2\end{matrix}~~\begin{matrix}4\\5\end{matrix} \end{matrix},~~\begin{matrix}2\\3\end{matrix},~~4\right\}$ 
and $S'_{4}=S_{3}\cup\left\{\begin{matrix}4\\4\end{matrix}\right\}=\left\{1,~~\begin{matrix}5\\3\end{matrix},~~\begin{matrix}3\\ \begin{matrix}1\\2\end{matrix}~~\begin{matrix}4\\5\end{matrix} \end{matrix},~~\begin{matrix}2\\3\end{matrix},~~\begin{matrix}4\\4\end{matrix}\right\}$ 
are maximal orthogonal systems containing the set $S_{1}.$ 
By the same method, we may construct all maximal orthogonal systems containing the set $S_{1}$ and we list them as follows. 
\[S_{4}=\left\{1,~~\begin{matrix}5\\3\end{matrix},~~\begin{matrix}3\\ \begin{matrix}1\\2\end{matrix}~~\begin{matrix}4\\5\end{matrix} \end{matrix},~~4,~~\begin{matrix}2\\3\end{matrix}\right\},
S'_{4}=\left\{1,~~\begin{matrix}5\\3\end{matrix},~~\begin{matrix}3\\ \begin{matrix}1\\2\end{matrix}~~\begin{matrix}4\\5\end{matrix} \end{matrix},~~\begin{matrix}2\\3\end{matrix},~~\begin{matrix}4\\4\end{matrix}\right\},
\left\{1,~~\begin{matrix}5\\3\end{matrix},~~4,~~\begin{matrix}3\\4\\5\end{matrix},~~2\right\},
\left\{1,~~\begin{matrix}5\\3\end{matrix},~~\begin{matrix}4\\4\end{matrix},~~\begin{matrix}3\\4\\5\end{matrix},~~2\right\}.
\]
By Corollary \ref{sms-BGA}, They are all simple-minded systems containing orthogonal system $S_{1}$ in $C$-$\stmod$. In this way, we may list all  simple-minded systems in $C$-$\stmod$.

We draw parts of the stable AR-components $_{s}\Gamma$, $_{s}P$ and $_{s}Q$ as in the figure 4, 5 and 6, respectively. 
\begin{figure}\vspace{-5cm}
\begin{small}
\[\xymatrix@dr@R=27pt@C=27pt@!0{
&& &&&&\scriptstyle &&\scriptstyle\cdots\ar[rr]&&\scriptstyle\tiny{\begin{matrix} \begin{matrix}3\end{matrix}~~\begin{matrix}1\end{matrix}~~\begin{matrix}\ \end{matrix}\\\begin{matrix}\ \end{matrix}\begin{matrix}1\end{matrix}~~\begin{matrix}2\end{matrix}\\ \begin{matrix}\ \end{matrix}~~\begin{matrix}\ \end{matrix}~~\begin{matrix}\ \end{matrix}~~\begin{matrix}3\end{matrix} \end{matrix}} \ar[rr]&& \scriptstyle\cdots&& &&\\
&&\\
&&&&\scriptstyle &&\scriptstyle\cdots \ar[rr] &&\scriptstyle\tiny{\begin{matrix} \begin{matrix}4\end{matrix}~~\begin{matrix}3\end{matrix}~~\begin{matrix}\ \end{matrix}\\\begin{matrix}\ \end{matrix}\begin{matrix}4\end{matrix}~~\begin{matrix}1\end{matrix}\end{matrix}} \ar[rr]\ar[uu] \ar[rr]&&\scriptstyle\ar[rr]\ar[uu]\tiny{\begin{matrix}3\\1\end{matrix}}&&\scriptstyle\ar[uu] \tiny{\begin{matrix}2\\3\\1\end{matrix}}\\
&&\\
&&\scriptstyle& &\scriptstyle\cdots\ar[rr] && \scriptstyle\tiny{\begin{matrix} ~~\begin{matrix}\ \end{matrix}~~\begin{matrix}4\end{matrix}~~\begin{matrix}3 \end{matrix}~~\begin{matrix}\ \end{matrix}~~\begin{matrix}\ \end{matrix}\\\begin{matrix}5 \end{matrix}~~\begin{matrix}4 \end{matrix}~~\begin{matrix}1 \end{matrix}~~\begin{matrix}\ \end{matrix}\\ \begin{matrix}\ \end{matrix}~~\begin{matrix}\ \end{matrix}~~\begin{matrix}\ \end{matrix}~~\begin{matrix}2\end{matrix} \end{matrix}}\ar[rr]\ar[uu]&& \scriptstyle \tiny{\begin{matrix} \begin{matrix}4\end{matrix}~~\begin{matrix}3\end{matrix}~~\begin{matrix}\ \end{matrix}\\\begin{matrix}\ \end{matrix}\begin{matrix}4\end{matrix}~~\begin{matrix}1\end{matrix}\\ \begin{matrix}\ \end{matrix}~~\begin{matrix}\ \end{matrix}~~\begin{matrix}\ \end{matrix}~~\begin{matrix}2\end{matrix} \end{matrix}}  \ar[uu] \ar[rr]  &&\scriptstyle \tiny{\begin{matrix}3\\1\\2\end{matrix}}\ar[uu]&&\\	
&& \\	
\scriptstyle &&\scriptstyle \cdots\ar[rr] &&\scriptstyle \tiny{\begin{matrix}\begin{matrix}\ \end{matrix}~~\begin{matrix}1 \end{matrix}~~\begin{matrix}\ \end{matrix}~~\begin{matrix}\ \end{matrix}\begin{matrix}4\end{matrix}\\~~\begin{matrix}\ \end{matrix}~~\begin{matrix}\ \end{matrix}~~\begin{matrix}2\end{matrix}~~~~\begin{matrix}\ \end{matrix}\begin{matrix}5\end{matrix}~~\begin{matrix}4\end{matrix}\\\begin{matrix}\ \end{matrix}~~\begin{matrix}3\end{matrix}\end{matrix}}\ar[uu] \ar[rr] &&\scriptstyle \tiny{\begin{matrix}4\\\begin{matrix}4\end{matrix}~~\begin{matrix}5\end{matrix}\end{matrix}}\ar[uu]\ar[rr]&&\scriptstyle \new{\tiny{\begin{matrix}4\\4\end{matrix}}}\ar[uu]\\	
&& \\	
\scriptstyle\cdots \ar[rr]  &&\scriptstyle \tiny{\begin{matrix} \begin{matrix}2\end{matrix}~~\begin{matrix}\ \end{matrix}~~\begin{matrix}\ \end{matrix}~~\begin{matrix}\ \end{matrix}~~\\\begin{matrix}3\end{matrix}~~\begin{matrix}1\end{matrix}~~\begin{matrix}\ \end{matrix}\\\begin{matrix}\ \end{matrix}~~\begin{matrix}\ \end{matrix}~~\begin{matrix}1\end{matrix}~~\begin{matrix}2\end{matrix}~~\begin{matrix}5\end{matrix}\\ \begin{matrix}\ \end{matrix}~~\begin{matrix}\ \end{matrix}~~\begin{matrix}\ \end{matrix}~~\begin{matrix}3\end{matrix} \end{matrix}} \ar[rr] \ar[uu] &&\scriptstyle \tiny{\begin{matrix}\begin{matrix}1\\2\end{matrix}~~\begin{matrix}5\end{matrix}\\3\end{matrix}}\ar[uu]\ar[rr] &&\scriptstyle5\ar[uu]  && \\	
&&& \\
\scriptstyle \tiny{\begin{matrix} \begin{matrix}3\end{matrix}~~\begin{matrix}1\end{matrix}~~\begin{matrix}\ \end{matrix}\\\begin{matrix}\ \end{matrix}\begin{matrix}1\end{matrix}~~\begin{matrix}2\end{matrix}\\ \begin{matrix}\ \end{matrix}~~\begin{matrix}\ \end{matrix}~~\begin{matrix}\ \end{matrix}~~\begin{matrix}3\end{matrix} \end{matrix}} \ar[uu]\ar[rr] && \scriptstyle \tiny{\begin{matrix} \begin{matrix}2\end{matrix}~~\begin{matrix}\ \end{matrix}~~\begin{matrix}\ \end{matrix}~~\begin{matrix}\ \end{matrix}~~\\\begin{matrix}3\end{matrix}~~\begin{matrix}1\end{matrix}~~\begin{matrix}\ \end{matrix}\\\begin{matrix}\ \end{matrix}\begin{matrix}1\end{matrix}~~\begin{matrix}2\end{matrix}\\ \begin{matrix}\ \end{matrix}~~\begin{matrix}\ \end{matrix}~~\begin{matrix}\ \end{matrix}~~\begin{matrix}3\end{matrix} \end{matrix}} \ar[uu] \ar[rr]  &&\scriptstyle \tiny{\begin{matrix}1\\2\\3\end{matrix}} \ar[uu]  \\
&&& \\
\scriptstyle \tiny{\begin{matrix}3\\1\end{matrix}} \ar[uu]\ar[rr] && \scriptstyle \tiny{\begin{matrix}2\\3\\1\end{matrix}}\ar[uu]\\	
&& \\
\tiny{\begin{matrix}3\\1\\2\end{matrix}} \ar[uu] \\}\] 
\end{small}
\vspace{-0.5cm}
\caption{Stable quasi-tube $_{s}P$ of rank $5$}
\end{figure}

\begin{figure}\vspace{-5cm}
\begin{small}
\[\xymatrix@dr@R=27pt@C=24pt@!0{
&& &&&&\scriptstyle &&\scriptstyle\cdots \ar[rr]&&\scriptstyle\tiny{\begin{matrix} ~~\begin{matrix}\ \end{matrix}~~\begin{matrix}1\end{matrix}~~\begin{matrix}3 \end{matrix}~~\begin{matrix}\ \end{matrix}~~\begin{matrix}\ \end{matrix}\\\begin{matrix}2 \end{matrix}~~\begin{matrix}1 \end{matrix}~~\begin{matrix}4 \end{matrix}~~\begin{matrix}\ \end{matrix}\\ \begin{matrix}\ \end{matrix}~~\begin{matrix}\ \end{matrix}~~\begin{matrix}\ \end{matrix}~~\begin{matrix}5\end{matrix} \end{matrix}} \ar[rr]&& \scriptstyle\cdots&& &&\\
&&\\
&&&&\scriptstyle &&\scriptstyle\cdots \ar[rr] &&\scriptstyle\tiny{\begin{matrix}\begin{matrix}\ \end{matrix}~~\begin{matrix}4 \end{matrix}~~\begin{matrix}\ \end{matrix}~~\begin{matrix}\ \end{matrix}\begin{matrix}1\end{matrix}\\~~\begin{matrix}\ \end{matrix}~~\begin{matrix}\ \end{matrix}~~\begin{matrix}5\end{matrix}~~~~\begin{matrix}\ \end{matrix}\begin{matrix}2\end{matrix}~~\begin{matrix}1\end{matrix}\\\begin{matrix}\ \end{matrix}~~\begin{matrix}3\end{matrix}\end{matrix}}\ar[rr]\ar[uu] \ar[rr]&&\scriptstyle\ar[rr]\ar[uu]\tiny{\begin{matrix}1\\\begin{matrix}1\end{matrix}~~\begin{matrix}2\end{matrix}\end{matrix}}&&\scriptstyle\ar[uu] \tiny{\begin{matrix}1\\1\end{matrix}}\\
&&\\
&&\scriptstyle & &\scriptstyle \cdots\ar[rr] && \scriptstyle \tiny{\begin{matrix} \begin{matrix}5\end{matrix}~~\begin{matrix}\ \end{matrix}~~\begin{matrix}\ \end{matrix}~~\begin{matrix}\ \end{matrix}~~\\\begin{matrix}3\end{matrix}~~\begin{matrix}4\end{matrix}~~\begin{matrix}\ \end{matrix}\\\begin{matrix}\ \end{matrix}~~\begin{matrix}\ \end{matrix}~~\begin{matrix}4\end{matrix}~~\begin{matrix}5\end{matrix}~~\begin{matrix}2\end{matrix}\\ \begin{matrix}\ \end{matrix}~~\begin{matrix}\ \end{matrix}~~\begin{matrix}\ \end{matrix}~~\begin{matrix}3\end{matrix} \end{matrix}}  \ar[rr]\ar[uu]&& \scriptstyle \tiny{\begin{matrix}\begin{matrix}2\end{matrix}~~\begin{matrix}4\\5\end{matrix}\\3\end{matrix}}\ar[uu] \ar[rr]  &&\scriptstyle \new{\tiny{\begin{matrix}2\end{matrix}}} \ar[uu]&&\\	
&& \\	
\scriptstyle &&\scriptstyle\cdots \ar[rr] &&\scriptstyle \tiny{\begin{matrix} \begin{matrix}3\end{matrix}~~\begin{matrix}4\end{matrix}~~\begin{matrix}\ \end{matrix}\\\begin{matrix}\ \end{matrix}\begin{matrix}4\end{matrix}~~\begin{matrix}5\end{matrix}\\ \begin{matrix}\ \end{matrix}~~\begin{matrix}\ \end{matrix}~~\begin{matrix}\ \end{matrix}~~\begin{matrix}3\end{matrix} \end{matrix}} \ar[uu] \ar[rr] &&\scriptstyle \tiny{\begin{matrix} \begin{matrix}5\end{matrix}~~\begin{matrix}\ \end{matrix}~~\begin{matrix}\ \end{matrix}~~\begin{matrix}\ \end{matrix}~~\\\begin{matrix}3\end{matrix}~~\begin{matrix}4\end{matrix}~~\begin{matrix}\ \end{matrix}\\\begin{matrix}\ \end{matrix}\begin{matrix}4\end{matrix}~~\begin{matrix}5\end{matrix}\\ \begin{matrix}\ \end{matrix}~~\begin{matrix}\ \end{matrix}~~\begin{matrix}\ \end{matrix}~~\begin{matrix}3\end{matrix} \end{matrix}} \ar[uu]\ar[rr]&&\scriptstyle \tiny{\begin{matrix}4\\5\\3\end{matrix}}\ar[uu]\\	
&& \\	
\scriptstyle \cdots\ar[rr]  &&\scriptstyle \tiny{\begin{matrix} \begin{matrix}1\end{matrix}~~\begin{matrix}1\end{matrix}~~\begin{matrix}\ \end{matrix}\\\begin{matrix}\ \end{matrix}\begin{matrix}3\end{matrix}~~\begin{matrix}4\end{matrix}\end{matrix}}  \ar[rr] \ar[uu] &&\scriptstyle \tiny{\begin{matrix}3\\4\end{matrix}}\ar[uu]\ar[rr] &&\scriptstyle \tiny{\begin{matrix}5\\3\\4\end{matrix}}\ar[uu]  && \\	
&&& \\
\scriptstyle \scriptstyle \tiny{\begin{matrix} ~~\begin{matrix}\ \end{matrix}~~\begin{matrix}1\end{matrix}~~\begin{matrix}3 \end{matrix}~~\begin{matrix}\ \end{matrix}~~\begin{matrix}\ \end{matrix}\\\begin{matrix}2 \end{matrix}~~\begin{matrix}1 \end{matrix}~~\begin{matrix}4 \end{matrix}~~\begin{matrix}\ \end{matrix}\\ \begin{matrix}\ \end{matrix}~~\begin{matrix}\ \end{matrix}~~\begin{matrix}\ \end{matrix}~~\begin{matrix}5\end{matrix} \end{matrix}} \ar[uu]\ar[rr] && \scriptstyle \tiny{\begin{matrix} \begin{matrix}1\end{matrix}~~\begin{matrix}3\end{matrix}~~\begin{matrix}\ \end{matrix}\\\begin{matrix}\ \end{matrix}\begin{matrix}1\end{matrix}~~\begin{matrix}4\end{matrix}\\ \begin{matrix}\ \end{matrix}~~\begin{matrix}\ \end{matrix}~~\begin{matrix}\ \end{matrix}~~\begin{matrix}5\end{matrix} \end{matrix}} \ar[uu] \ar[rr]  &&\scriptstyle \new{\tiny{\begin{matrix}3\\4\\5\end{matrix}}} \ar[uu]  \\
&&& \\
\scriptstyle \tiny{\begin{matrix}1\\\begin{matrix}1\end{matrix}~~\begin{matrix}2\end{matrix}\end{matrix}} \ar[uu]\ar[rr] && \scriptstyle\tiny{\begin{matrix}1\\1\end{matrix}}\ar[uu]\\	
&& \\
\new{\tiny{\begin{matrix}2\end{matrix}}} \ar[uu] }\] 
\end{small}
\vspace{-0.5cm}
\caption{Stable quasi-tube $_{s}Q$ of rank $5$}
\end{figure}
\medskip
\begin{figure}
\vspace{-1.5cm}
\begin{equation*}\label{AR-Gamma}
\begin{aligned}
\xymatrix@dr@R=25pt@C=25pt@!0{&&\scriptstyle\old{1}\ar[rr] &&\scriptstyle\tiny{\begin{matrix}3\\ \begin{matrix}1\end{matrix}~~\begin{matrix}4\\5\end{matrix} \end{matrix}} \ar[rr]& &\scriptstyle\tiny{\begin{matrix}  \begin{matrix}3\end{matrix}~~\\\begin{matrix}1\end{matrix}~~\begin{matrix}4\end{matrix}~~\end{matrix}}\ar[rr] && \scriptstyle\tiny{\begin{matrix}  \begin{matrix}4\end{matrix}~~\begin{matrix}3 \end{matrix}\begin{matrix}\ \end{matrix}\\ \begin{matrix}5\\3\end{matrix}~~\begin{matrix}4\end{matrix}~~\begin{matrix}1\end{matrix}\end{matrix}}
\ar[rr]&& \scriptstyle \tiny{\begin{matrix} \begin{matrix}\ \end{matrix}~~\begin{matrix}\ \end{matrix}~~\begin{matrix}\ \end{matrix}\begin{matrix}4\end{matrix}~~\begin{matrix}3 \end{matrix}\\~~\begin{matrix}2\end{matrix}~~\begin{matrix}5\end{matrix}~~\begin{matrix}4\end{matrix}~~\begin{matrix}1\end{matrix}\\\begin{matrix}3\end{matrix}~~\begin{matrix}\ \end{matrix}~~\begin{matrix}\ \end{matrix}~~\begin{matrix}\ \end{matrix}\end{matrix}}  \ar[rr] &&\scriptstyle\cdots\\	
&& \\	
&&\scriptstyle\tiny{\begin{matrix}1\\2\end{matrix}}\ar[uu] \ar[rr] &&\scriptstyle\new{\tiny{\begin{matrix}3\\ \begin{matrix}1\\2\end{matrix}~~\begin{matrix}4\\5\end{matrix} \end{matrix}}} \ar[rr]\ar[uu]&& 	\scriptstyle \old{\tiny{\Omega^{-1}(\begin{matrix}5\\3\end{matrix})=\begin{matrix}  \begin{matrix}3\end{matrix}~~\\\begin{matrix}1\\2\end{matrix}~~\begin{matrix}4\end{matrix}~~\end{matrix}} }\ar[rr]\ar[uu]&&\scriptstyle \tiny{\begin{matrix} \begin{matrix}\ \end{matrix}~~ \begin{matrix}4\end{matrix}~~\begin{matrix}3 \end{matrix}~~\begin{matrix}\ \end{matrix}\\ \begin{matrix}5\\3\end{matrix}~~\begin{matrix}4\end{matrix}~~\begin{matrix}1\\2\end{matrix}\end{matrix}} \ar[uu]\ar[rr]&& \scriptstyle \tiny{\begin{matrix} \begin{matrix}\ \end{matrix}~~\begin{matrix}\ \end{matrix}~~\begin{matrix}\ \end{matrix}\begin{matrix}4\end{matrix}~~\begin{matrix}3 \end{matrix}\\\begin{matrix}2\end{matrix}~~\begin{matrix}5\end{matrix}~~\begin{matrix}4\end{matrix}~~\begin{matrix}1\\2\end{matrix}\\\begin{matrix}3\end{matrix}~~\begin{matrix}\ \end{matrix}~~\begin{matrix}\ \end{matrix}~~\begin{matrix}\ \end{matrix}\end{matrix}} \ar[uu]\ar[rr] &&\scriptstyle \cdots\ar[uu] \\	
&& \\	
&&\scriptstyle \tiny{\begin{matrix} \begin{matrix}1\\2\end{matrix}~~\begin{matrix}4\\5\end{matrix}\\3 \end{matrix}} \ar[uu]\ar[rr] &&\scriptstyle \old{\tiny{\Omega(\begin{matrix}5\\3\end{matrix})=\begin{matrix}4\\5\end{matrix}}}\ar[rr]\ar[uu] &&\scriptstyle \new{4}\ar[rr]\ar[uu] && \scriptstyle\tiny{\begin{matrix} \begin{matrix} 4\end{matrix}\\\begin{matrix}5\\3\end{matrix}~~\begin{matrix}4\end{matrix}\end{matrix}}\ar[uu] \ar[rr]&& \scriptstyle \tiny{\begin{matrix} \begin{matrix}\ \end{matrix}~~\begin{matrix}\ \end{matrix}~~\begin{matrix}\ \end{matrix}\begin{matrix}4\end{matrix}\\~~\begin{matrix}2\end{matrix}~~\begin{matrix}5\end{matrix}~~\begin{matrix}4\end{matrix}\\\begin{matrix}3\end{matrix}~~~~\begin{matrix}\ \end{matrix}\end{matrix}}  \ar[uu]\ar[rr] &&\scriptstyle \cdots\ar[uu]\\	
&&	& \\
&&\scriptstyle \tiny{\cdots} \ar[rr] \ar[uu]&& \scriptstyle\tiny{\begin{matrix}\begin{matrix}\ \end{matrix}~~\begin{matrix}\ \end{matrix}~~\begin{matrix}\ \end{matrix}~~\begin{matrix}5\end{matrix} \\ \begin{matrix}\ \end{matrix}~~\begin{matrix}4\end{matrix}~~\begin{matrix}3\end{matrix}~~\\ \begin{matrix}5\end{matrix}~~\begin{matrix}4\end{matrix}\end{matrix}} \ar[rr]\ar[uu] &&\scriptstyle \tiny{\begin{matrix}\begin{matrix} 4\end{matrix}~~\begin{matrix}5\\3\end{matrix}\\\begin{matrix} 4\end{matrix}\end{matrix}}\ar[rr] \ar[uu] && \scriptstyle\tiny\old{\begin{matrix}5\\3\end{matrix}}\ar[uu] \ar[rr]&& \scriptstyle \tiny{\begin{matrix}\begin{matrix} 2\end{matrix}~~\begin{matrix}5\end{matrix}\\\begin{matrix} 3\end{matrix}\end{matrix}} \ar[uu]\ar[rr] &&\scriptstyle\tiny{\begin{matrix}1~~\begin{matrix}\ \end{matrix}~~\begin{matrix}\ \end{matrix}\\ \begin{matrix}1\end{matrix}~~\begin{matrix}\ \end{matrix}\begin{matrix}2\end{matrix}~~\begin{matrix}5\end{matrix}\\~~\begin{matrix}\ \end{matrix}~~\begin{matrix}\ \end{matrix}3 \end{matrix}} \ar[uu] \\
&&& \\
&&\scriptstyle\tiny{\cdots} \ar[uu] \ar[rr] && \scriptstyle\tiny{\begin{matrix} \begin{matrix}\ \end{matrix}~~\begin{matrix}4\end{matrix}~~\begin{matrix}3\end{matrix}~~\\ \begin{matrix}5\end{matrix}~~\begin{matrix}4\end{matrix}\end{matrix}}  \ar[rr]\ar[uu] && \scriptstyle\tiny{\begin{matrix}\begin{matrix} 4\end{matrix}~~\begin{matrix}3\end{matrix}\\\begin{matrix} 4\end{matrix}\end{matrix}}\ar[rr]\ar[uu] &&\scriptstyle\tiny{3} \ar[uu]\ar[rr]&& \scriptstyle \new{\tiny{\begin{matrix}2\\3\end{matrix}}}\ar[uu]\ar[rr] && \scriptstyle \old{\Omega^{-1}(1)=\tiny{\begin{matrix}1\\ \begin{matrix}1\end{matrix}~~\begin{matrix}2\\3\end{matrix} \end{matrix} }}\ar[uu]  \\ 
&&\\
&&\scriptstyle\tiny{\cdots}  \ar[uu] \ar[rr]&&\scriptstyle\tiny{\begin{matrix} \begin{matrix}1\end{matrix}~~\begin{matrix}3\end{matrix}~~\begin{matrix}4\end{matrix}\\ \begin{matrix}\ \end{matrix}~~\begin{matrix}\ \end{matrix}~~\begin{matrix}1 \end{matrix}~~\begin{matrix}4\end{matrix}~~\begin{matrix}5\end{matrix}\end{matrix}} \ar[rr]\ar[uu]&&\scriptstyle~~\tiny{\begin{matrix} \begin{matrix}1\end{matrix}~~\begin{matrix}3\end{matrix}~~\begin{matrix}4\end{matrix}\\ 1~~\begin{matrix}\ \end{matrix}~~\begin{matrix}4\end{matrix}\end{matrix}} \ar[rr]\ar[uu]&&\scriptstyle \tiny{\begin{matrix} \begin{matrix}1\end{matrix}~~\begin{matrix}3\end{matrix}\\1 \end{matrix}} \ar[uu]\ar[rr]&& \scriptstyle \old{\Omega(1)=\tiny{\begin{matrix} \begin{matrix}1\end{matrix}~~\begin{matrix}2\\3\end{matrix}\\1 \end{matrix}} }\ar[uu]\ar[rr] && \scriptstyle\old{1} \ar[uu] }
		\end{aligned}
\end{equation*}
\caption{Euclidean component $_{s}\Gamma$}
\end{figure}
\end{Ex}

\section*{Declarations}

\subsection*{ Funding}
This work was supported by the National Natural Science Foundation of China (No. 12301044).

\subsection*{Data Availability}
No datasets were generated or analysed during the current study.

\subsection*{Ethical Approval} Not applicable.

\end{document}